\theoremstyle{plain}
\newtheorem{theorem}{Theorem}[section]
\newtheorem{proposition}[theorem]{Proposition}
\newtheorem{lemma}[theorem]{Lemma}
\newtheorem{claim}[theorem]{Claim}
\newtheorem{fact}[theorem]{Fact}
\newtheorem{question}[theorem]{Question}
\newtheorem{conjecture}[theorem]{Conjecture}
\theoremstyle{definition}
\newtheorem{definition}[theorem]{Definition}
\theoremstyle{remark}
\newtheorem{remark}[theorem]{Remark}
\DeclareMathAlphabet{\mathpzc}{OT1}{pzc}{m}{it}
\newcommand{\Ts}{\mathpzc{Ts}}
\newcommand{\Tss}{\mathpzc{Ts}^{*}}
\newcommand{\G}{\mathbb{G}}
\newcommand{\sel}{\mathfrak{S}}
\newcommand{\T}{\mathcal{T}}
\newcommand{\m}{\mathfrak{m}}
\begin{document}

\title[Tsirelson-like spaces and complexity]{Tsirelson-like spaces and complexity of classes of Banach spaces}
\author{Ond\v{r}ej Kurka}
\thanks{The research was supported by the grant GA\v{C}R 14-04892P. The author is a junior researcher in the University Centre for Mathematical Modelling, Applied Analysis and Computational Mathematics (MathMAC)}
\address{Department of Mathematical Analysis, Charles University, Soko\-lovsk\'a 83, 186 75 Prague 8, Czech Republic}
\email{kurka.ondrej@seznam.cz}
\keywords{Effros Borel structure, complete analytic set, Tsirelson space, Banach space $ c_{0} $, Schur property}
\subjclass[2010]{Primary 46B25, 54H05; Secondary 46B03, 46B20}
\begin{abstract}
Employing a construction of Tsirelson-like spaces due to Argyros and Deliyanni, we show that the class of all Banach spaces which are isomorphic to a subspace of $ c_{0} $ is a complete analytic set with respect to the Effros Borel structure of separable Banach spaces. Moreover, the classes of all separable spaces with the Schur property and of all separable spaces with the Dunford-Pettis property are $ \mathbf{\Pi}^{1}_{2} $-complete.
\end{abstract}
\maketitle

\section{Introduction and main results}

During the last two decades, it turned out that descriptive set theory provides a fruitful approach to several questions in separable Banach space theory. A particular and generally still not well understood question is the question of the descriptive complexity of a given class of separable Banach spaces. In the present work, we introduce a new approach to complexity problems in Banach space theory which is based on a fundamental example of Tsirelson.

The connections between descriptive set theory and Banach space theory were discovered by J.~Bourgain \cite{bourgain1, bourgain2}. Later, B.~Bossard \cite{bossard} investigated codings of separable Banach spaces up to isomorphism by standard Borel spaces and used the Effros Borel structure for studying complexity questions in Banach space theory (see Section~\ref{sec:prelimI} for the definitions of the Effros Borel structure and of the related notions used below, let us note here that by an isomorphism we mean a linear isomorphism throughout this paper).

It can be shown quite easily that the isomorphism class of any separable Banach space is analytic. B.~Bossard asked in \cite{bossard} whether $ \ell_{2} $ is (up to isomorphism) the only infinite-dimensional separable Banach space whose isomorphism class is Borel. There are several examples for which the isomorphism class is shown to be non-Borel, for instance Pe\l czy\'nski's universal space \cite{bossard}, $ C(2^{\mathbb{N}}) $ (see e.g. \cite[(33.26)]{kechris}) or $ L_{p}([0, 1]) $ for $ 1 < p < \infty, p \neq 2, $ (see e.g. \cite{ghawadrah}). A by-product of the present work are two new examples $ (\bigoplus G_{n})_{c_{0}} $ and $ (\bigoplus G_{n})_{\ell_{1}} $ (see Remarks~\ref{remTss}(ii) and \ref{remTs}(vii)).

Bossard's question has been recently answered by G.~Godefroy \cite{godefroycompl} who has proven the existence of a space which is not isomorphic to $ \ell_{2} $ but the isomorphism class of which is Borel. The following question posed in \cite{godefroyprobl}, however, remains open.

\begin{question}[Godefroy] \label{questgodef}
Is the class of all Banach spaces isomorphic to $ c_{0} $ Borel?
\end{question}

In Section~\ref{sec:question}, we present some remarks concerning this interesting problem. Although we have not found its solution, we have obtained the following related result.

\begin{theorem} \label{thmc0subsp}
The class of all Banach spaces which can be embedded isomorphically into $ c_{0} $ is complete analytic. In particular, it is not Borel.
\end{theorem}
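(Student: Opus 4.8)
The plan is to establish that
$$\mathcal{C} = \{\, X \in SB : X \text{ embeds isomorphically into } c_{0} \,\}$$
is analytic and then that it is complete, by Borel-reducing the canonical $\mathbf{\Sigma}^{1}_{1}$-complete set of ill-founded trees to it. Analyticity is the routine half. Using the Effros Borel structure one obtains Borel maps $X \mapsto (x_{n}(X))_{n}$ selecting a dense sequence in each coded space (see Section~\ref{sec:prelimI}); membership in $\mathcal{C}$ is then witnessed by
$$\exists\, (y_{n}) \in c_{0}^{\mathbb{N}}\ \exists\, k \in \mathbb{N}\ \forall\, (a_{i}) \in \mathbb{Q}^{<\mathbb{N}} : \tfrac{1}{k} \bigl\| \textstyle\sum_{i} a_{i} x_{i} \bigr\| \le \bigl\| \textstyle\sum_{i} a_{i} y_{i} \bigr\|_{c_{0}} \le k \bigl\| \textstyle\sum_{i} a_{i} x_{i} \bigr\|,$$
an existential quantifier over the Polish space $c_{0}^{\mathbb{N}}$ in front of a Borel condition, so $\mathcal{C}$ is $\mathbf{\Sigma}^{1}_{1}$. (Alternatively one invokes that isomorphic embeddability is analytic on $SB \times SB$, as in \cite{bossard}, and fixes the second coordinate to be $c_{0}$.)

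For completeness I would use the Tsirelson-like spaces of Argyros and Deliyanni to build a Borel map $T \mapsto X_{T}$, defined on the standard Borel space $\mathrm{Tr}$ of trees on $\mathbb{N}$, realising the equivalence
$$T \text{ is ill-founded} \iff X_{T} \in \mathcal{C}.$$
Concretely, $X_{T}$ would be a mixed-Tsirelson space built over a fixed sequence of finite-dimensional blocks $G_{n}$, with the admissible families (equivalently, the weights $\theta$) entering the implicit norm prescribed by the nodes of $T$. The design should be such that an infinite branch of $T$ deactivates the norm-increasing constraints, so that $X_{T}$ becomes isomorphic to a subspace of a $c_{0}$-sum $(\bigoplus_{n} G_{n})_{c_{0}}$, whereas well-foundedness keeps the constraints genuinely active at cofinally many scales.

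The two implications would then be discharged as follows. Since every finite-dimensional space sits isometrically in some $\ell_{\infty}^{N} = \mathrm{span}(e_{1}, \dots, e_{N}) \subseteq c_{0}$, a $c_{0}$-sum $(\bigoplus_{n} G_{n})_{c_{0}}$ embeds isometrically into $c_{0}$; hence in the ill-founded case $X_{T} \in \mathcal{C}$. In the well-founded case a transfinite induction on the rank of $T$ should yield a Tsirelson-type lower $\ell_{1}$-estimate on every normalized block sequence, forcing $X_{T}$ to be reflexive and in particular to contain no copy of $c_{0}$; since every infinite-dimensional subspace of $c_{0}$ itself contains $c_{0}$, such an $X_{T}$ cannot embed into $c_{0}$, i.e. $X_{T} \notin \mathcal{C}$. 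Provided $X_{T}$ is always infinite-dimensional, this gives exactly the stated equivalence, and by standard descriptive set theory \cite{kechris} the completeness (hence non-Borelness) of $\mathcal{C}$ follows.

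The main obstacle is the design and analysis of the map $T \mapsto X_{T}$, and it is twofold. First, one must engineer the tree-indexed admissibility so that both extremes hold simultaneously --- degeneration to a $c_{0}$-sum along any infinite branch, and a uniform lower estimate whenever $T$ is well-founded --- and the latter must be proved \emph{globally}, ruling out $c_{0}$ in every subspace rather than merely in the block subspaces generated by the $G_{n}$; this is precisely where the finer features of the Argyros--Deliyanni construction are needed. Second, the assignment $T \mapsto X_{T}$ must be verified to be Borel as a map into $SB$, uniformly in $T$, which requires that the implicitly defined norm depend on $T$ in a Borel fashion. The descriptive-set-theoretic bookkeeping is comparatively soft; the hard mathematical content is the well-founded half of the dichotomy.
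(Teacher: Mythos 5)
Your overall architecture coincides with the paper's: analyticity is routine (the paper cites \cite[Theorem~2.3]{bossard}, and your explicit $\mathbf{\Sigma}^{1}_{1}$ formula is fine), and hardness is obtained by a Borel reduction through Argyros--Deliyanni-type spaces whose admissibility structure is controlled by a parameter, with the dichotomy ``degenerate case $\Rightarrow$ $c_{0}$-sum of finite-dimensional spaces, non-degenerate case $\Rightarrow$ no embedding into $c_{0}$.'' The Borel selection you defer is indeed soft (Fact~\ref{factTsscontin} makes $\mathcal{M} \mapsto \Vert x \Vert_{\mathcal{M}}$ continuous, and Lemma~\ref{lemmselect} does the rest), and your tree-indexed reduction is viable: the paper reduces from the Hurewicz-complete set $\{ \mathcal{M} \in \mathcal{K}(2^{\mathbb{N}}) : \mathcal{M} \text{ contains an infinite set} \}$ rather than from ill-founded trees, but its Section~5 assignment $T \mapsto \mathcal{M}_{T}$ shows the tree version works equally well. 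However, there is a genuine gap at the one point the theorem turns on: you describe $X_{T}$ as a mixed Tsirelson space ``with the admissible families (equivalently, the weights $\theta$) entering the implicit norm,'' which is the $\Ts$-side construction of Argyros and Deliyanni. For that construction your ill-founded half \emph{fails}: when the admissibility system contains an infinite set, $\Ts[\mathcal{M}, \frac{1}{2}]$ is isomorphic to the \emph{$\ell_{1}$-sum} of finite-dimensional spaces (Remark~\ref{remTs}(ii)) --- a Schur space containing $\ell_{1}$, hence not embeddable into $c_{0}$ --- so both halves of your dichotomy would land outside the class and the reduction would compute nothing. The paper works instead with the gauge-side space $\Tss[\mathcal{M}, \frac{1}{2}]$, the Minkowski gauge of the smallest absolutely convex set closed under the admissibility operation; there an infinite set $\{ m_{1} < m_{2} < \dots \} \in \mathcal{M}$ yields, via Fact~\ref{factsupineq}, the upper estimate $\Vert x \Vert_{\mathcal{M}} \leq \Vert E_{0}x \Vert_{\mathcal{M}} + 2 \sup_{k} \Vert E_{k}x \Vert_{\mathcal{M}}$ and hence the $c_{0}$-sum isomorphism (Lemma~\ref{lemmaembc0}). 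Choosing the correct side of this duality is the missing idea, not a detail; the paper explicitly stresses that its space is derived from Tsirelson's original $\Tss$, not its dual.

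Relatedly, your assessment of where the difficulty lies is inverted. The well-founded half needs no new transfinite induction and no reflexivity: Argyros--Deliyanni's Lemma~\ref{lemmaargdel} (whose proof the paper quotes from \cite{argdel}) says that if $\mathcal{M}$ consists of finite sets only, the canonical basis of $\Tss[\mathcal{M}, \frac{1}{2}]$ is boundedly complete, and bounded completeness already forbids an embedding into $c_{0}$, since an infinite-dimensional subspace of $c_{0}$ contains $c_{0}$ and a copy of $c_{0}$ would produce a block basic sequence that is not boundedly complete. (Reflexivity is available if one adds all three-element sets, Lemma~\ref{lemmshrink}, but it is only needed later for Theorem~\ref{thmschur}.) Your worry about ruling out $c_{0}$ ``in every subspace rather than merely in block subspaces'' is also unfounded, by the standard Bessaga--Pe{\l}czy\'nski reduction to block sequences. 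So the hard content you postpone to the end --- the well-founded dichotomy half --- is in fact the quotable half, while the short ill-founded half, together with the decision to build the gauge-side space over general $\mathcal{M}$-admissibility, is the actual heart of the proof, and it is precisely the part your proposal leaves unspecified and, as sketched, would get wrong.
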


This result answers \cite[Problem~4]{godefroyprobl} and provides most likely the first example of a space $ X $ for which the class of spaces embeddable into $ X $ is shown not to be Borel. In other words, we have proven that the embeddability relation $ Y~\hookrightarrow~X $ has a non-Borel horizontal section $ \cdot~\hookrightarrow~X $. This discovery is not surprising, as the vertical section $ Y~\hookrightarrow~\cdot $ is known to be non-Borel for every infinite-dimensional $ Y $ (see \cite[Corollary~3.3(vi)]{bossard}).

Our second main result is based on a combination of methods used for proving Theorem~\ref{thmc0subsp} with a tree space method used in \cite{kurka}.

\begin{theorem} \label{thmschur}
The classes of all separable Banach spaces with the Schur property and of all separable Banach spaces with the Dunford-Pettis property are $ \mathbf{\Pi}^{1}_{2} $-complete. In particular, these classes are not $ \mathbf{\Sigma}^{1}_{2} $.
\end{theorem}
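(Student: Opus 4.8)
The plan is to establish both membership in $ \mathbf{\Pi}^{1}_{2} $ and $ \mathbf{\Pi}^{1}_{2} $-hardness for each class, and, since the Schur property implies the Dunford--Pettis property, to arrange a \emph{single} Borel reduction serving both classes at once. The hardness direction is where the Tsirelson-like machinery enters and will be the main effort.

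For the upper bound I would work with the Effros Borel structure on the separable Banach spaces, using its Borel selectors $ (d_{n}) $ furnishing a dense sequence in each space and the fact that the dual ball $ B_{X^{*}} $ is weak\textsuperscript{*}-compact metrizable and Borel-parametrized. A space $ X $ fails the Schur property exactly when it carries a normalized weakly null sequence, i.e.
\[
\exists (x_{k}) \in X^{\mathbb{N}}\ \Big[\inf_{k}\|x_{k}\| > 0\ \wedge\ \forall f \in B_{X^{*}}\ \big(f(x_{k}) \to 0\big)\Big].
\]
The bracketed condition is $ \mathbf{\Pi}^{1}_{1} $ (a universal quantifier over the Polish space $ B_{X^{*}} $ applied to a Borel relation), so the whole statement is $ \mathbf{\Sigma}^{1}_{2} $ and the Schur class is $ \mathbf{\Pi}^{1}_{2} $. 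For the Dunford--Pettis property I would use its sequential form: failure means there are weakly null $ (x_{k}) $ in $ X $ and weakly null $ (f_{k}) $ in $ X^{*} $ with $ \limsup_{k}|f_{k}(x_{k})| > 0 $; the only extra subtlety is expressing weak nullity in the nonseparable dual $ X^{*} $, which can be kept within $ \mathbf{\Pi}^{1}_{1} $ after passing to a suitable weak\textsuperscript{*}-dense countable set and the bidual parametrization, again yielding a $ \mathbf{\Sigma}^{1}_{2} $ description of the failure.

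For hardness I would reduce from the canonical $ \mathbf{\Pi}^{1}_{2} $-complete set of trees $ S $ on $ \mathbb{N}\times\mathbb{N} $ whose projection is everything, $ \{S : \forall x \in \mathbb{N}^{\mathbb{N}}\ \exists z\ (x,z) \in [S]\} $ (equivalently, every section $ S_{x} $ is ill-founded), aiming for a Borel map $ S \mapsto G(S) $ with $ G(S) $ having the Schur property iff every $ S_{x} $ is ill-founded. I would build $ G(S) $ by combining the Argyros--Deliyanni Tsirelson-like spaces used for Theorem~\ref{thmc0subsp} with the tree-space method of \cite{kurka}: the analytic, $ \exists z $-part is handled by a Tsirelson-like norm whose admissible sets are governed by the tree, while the genuinely uncountable universal quantifier $ \forall x $ is captured by assembling these data into a single $ \ell_{1} $-type tree space, in the spirit of the space $ (\bigoplus G_{n})_{\ell_{1}} $ of Remark~\ref{remTs}(vii). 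The design goal is that a well-founded section $ S_{x} $ (equivalently $ x \notin p[S] $) produces, via a spreading reflexive Tsirelson-like structure, a normalized weakly null sequence that destroys the Schur property, whereas if every section is ill-founded the $ \ell_{1} $-type assembly forces every normalized block sequence to be $ \ell_{1} $-like, so that no normalized weakly null sequence survives in $ G(S) $.

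The main obstacle is precisely this Banach-space-theoretic equivalence: verifying, through a careful analysis of the Tsirelson-like norm, that the combinatorics of branches of $ S $ is faithfully reflected in the presence or absence of normalized weakly null sequences, and that the single tree space genuinely encodes the uncountable quantifier $ \forall x $ rather than a mere countable conjunction (which would only reach $ \mathbf{\Sigma}^{1}_{1} $). Once the Schur reduction is in place, I expect the Dunford--Pettis case to follow from the same map: Schur always implies Dunford--Pettis, and the non-Schur instances of the construction contain a complemented infinite-dimensional reflexive Tsirelson-like subspace, which never has the Dunford--Pettis property — concretely, its normalized weakly null witness $ (x_{k}) $ has weakly null biorthogonal functionals $ (x_{k}^{*}) $, so $ x_{k}^{*}(x_{k}) = 1 \not\to 0 $, and the failure passes to $ G(S) $ by complementation. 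Hence $ G(S) $ has the Dunford--Pettis property iff it has the Schur property iff $ S $ lies in the complete set, and a single Borel reduction witnesses $ \mathbf{\Pi}^{1}_{2} $-hardness for both classes.
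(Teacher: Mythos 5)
Your overall architecture is the paper's: reduce from the $\mathbf{\Pi}^{1}_{2}$-complete set of trees all of whose sections are ill-founded (Lemma~\ref{lemmCPCA}), build a tree amalgam of Tsirelson-like spaces whose dual is $\ell_{1}$-like when all sections are ill-founded and contains a complemented infinite-dimensional reflexive subspace otherwise, and run a single Borel reduction for both classes (your transfer to Dunford--Pettis --- Schur implies DP, and a complemented infinite-dimensional reflexive subspace kills both --- is exactly Proposition~\ref{propETschur}(2) via Fact~\ref{factbranches}). But two steps are genuinely gapped. First, your $\mathbf{\Pi}^{1}_{2}$ upper bound for the Dunford--Pettis class fails as sketched: the sequential characterization forces you to say ``$(f_{k})$ is weakly null in $X^{*}$'', a universal quantification over $B_{X^{**}}$, and $B_{X^{**}}$ has no Polish parametrization in this coding. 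Your proposed repair --- testing against a weak\textsuperscript{*}-dense countable set --- only certifies weak\textsuperscript{*}-nullity, which is strictly weaker (by Goldstine, $B_{X}$ is weak\textsuperscript{*}-dense in $B_{X^{**}}$, so countable pointwise tests see only the weak\textsuperscript{*} topology); and replacing weak nullity by the Mazur-type convex-block characterization yields a $\forall\exists$ condition, pushing the failure of DP to $\mathbf{\Sigma}^{1}_{3}$ rather than $\mathbf{\Sigma}^{1}_{2}$. The paper circumvents the bidual entirely via Diestel's theorem: $X$ has DP iff every weakly compact $T : X \to c_{0}$ is completely continuous, and weak compactness of the image sequence in $c_{0}$ is an analytic condition (its closure in $[-1,1]^{\mathbb{N}}$ stays inside $B_{c_{0}}$); this is the content of Lemma~\ref{lemmDP} and is not a routine observation.

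Second, the hardness core you defer --- ``if every section is ill-founded, no normalized weakly null sequence survives'' --- is the paper's main effort, and your formulation hides two traps. The norm control available in the dual of the tree space applies to normalized \emph{weak\textsuperscript{*}-null} sequences (Lemma~\ref{lemmal1subseq}: an $\ell_{1}$-subsequence with constant $\frac{1}{5}$, proved by Jankov--von Neumann selection of branches, a weak\textsuperscript{*}-limit probability measure on the branch space, a compact $\Gamma$ of measure at least $\frac{7}{8}$, and a two-case splitting of mass on and off the tree of initial segments of $\Gamma$); passing from that to the Schur property for \emph{arbitrary} bounded sequences is itself nontrivial and requires the quantitative $\mathrm{ca}/\delta$ cluster-point argument of Proposition~\ref{propETschur} --- the paper explicitly remarks that the naive passage hinges on an open question. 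Moreover, your choice of trees on $\mathbb{N} \times \mathbb{N}$ with sections indexed by $\mathbb{N}^{\mathbb{N}}$ is harmless for $\mathbf{\Pi}^{1}_{2}$-completeness of the reduction set, but it would break the construction as the paper runs it: extracting the limit measure $\m$ uses weak\textsuperscript{*}-compactness of the probability measures on the compact branch space $2^{\mathbb{N}}$, and tightness fails over $\mathbb{N}^{\mathbb{N}}$; the paper works with $\mathrm{Tr}(2 \times \mathbb{N})$ for precisely this reason. Finally, Borel-measurability of $S \mapsto G(S)$ into $\mathcal{SE}(C([0,1]))$ is not automatic for the \emph{dual} space: it rests on the tree-space basis being shrinking (Lemma~\ref{lemmashrink}), engineered by padding every admissibility system $\mathcal{M}_{T}$ with all sets of size at most three, together with the selector Lemmas~\ref{lemmselect} and \ref{lemmselectET} --- none of which your sketch accounts for.
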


This result answers two questions posed by B.~M.~Braga in \cite{braga}. We recall that a Banach space $ X $ is said to have the \emph{Schur property} if every weakly convergent sequence in $ X $ is norm convergent. The \emph{Dunford-Pettis property} is defined in Section~\ref{sec:prelimII}. We note here just that a remarkable characterization states that $ X $ has the Dunford-Pettis property if and only if $ x^{*}_{n}(x_{n}) \to x^{*}(x) $ whenever $ x^{*}_{n} \to x^{*} $ weakly in $ X^{*} $ and $ x_{n} \to x $ weakly in $ X $.

Both results above are significantly based on a construction due to S.~A.~Argyros and I.~Deliyanni \cite{argdel} who generalized the well-known example of B.~S.~Tsirelson \cite{tsirelson}. Let us recall the definition of this important example.

For $ E \subset \mathbb{N} $ and $ x \in c_{00}(\mathbb{N}) $, we denote by $ Ex $ the restriction of $ x $ on $ E $, i.e., the element of $  c_{00}(\mathbb{N}) $ given by $ Ex(i) = x(i) $ for $ i \in E $ and $ Ex(i) = 0 $ for $ i \notin E $. A family $ \{ E_{1}, \dots, E_{n} \} $ of successive finite subsets of $ \mathbb{N} $ is said to be \emph{admissible} if
$$ n < E_{1} < E_{2} < \dots < E_{n}. $$
The system of all admissible families is denoted by $ \mathrm{adm} $.

\begin{definition}[Tsirelson]
Let $ \Theta $ be the smallest absolutely convex subset of $ c_{00}(\mathbb{N}) $ containing every basic vector $ e_{i} = \mathbf{1}_{\{ i \}}, i \in \mathbb{N}, $ and satisfying
$$ \{ E_{1}, \dots, E_{n} \} \in \mathrm{adm} \; \& \; x_{1}, \dots, x_{n} \in \Theta \quad \Rightarrow \quad \frac{1}{2} \sum_{k=1}^{n} E_{k}x_{k} \in \Theta. $$
Let $ \Vert \cdot \Vert_{\Tss} $ be the Minkowski gauge of $ \Theta $ and let $ \Tss $ be a completion of $ (c_{00}(\mathbb{N}), \Vert \cdot \Vert_{\Tss}) $.
\end{definition}

The space $ \Tss $ is the first example of an infinite-dimensional Banach space not containing an isomorphic copy of $ c_{0} $ or any $ \ell_{p} $. It is well-known that $ \Tss $ is reflexive and dual to the space $ \Ts $ defined as the Banach space of sequences $ x = \{ x(i) \}_{i=1}^{\infty} $ with the basis $ e_{i} = \mathbf{1}_{\{ i \}} $ and with the implicitly defined norm
$$ \Vert x \Vert_{\Ts} = \max \Bigg\{ \Vert x \Vert_{\infty}, \frac{1}{2} \sup \bigg\{ \sum_{k=1}^{n} \Vert E_{k}x \Vert_{\Ts} : \{ E_{1}, \dots, E_{n} \} \in \mathrm{adm} \bigg\} \Bigg\}. $$

\section{Preliminaries I} \label{sec:prelimI}

Our terminology concerning Banach space theory and descriptive set theory follows \cite{fhhmpz} and \cite{kechris}.

A \emph{Polish space (topology)} means a separable completely metrizable space (topology). A set $ X $ equipped with a $ \sigma $-algebra is called a \emph{standard Borel space} if the $ \sigma $-algebra is generated by a Polish topology on $ X $.

A subset $ A $ of a standard Borel space $ X $ is called an \emph{analytic set} (or a \emph{$ \mathbf{\Sigma}^{1}_{1} $ set}) if there exist a standard Borel space $ Y $ and a Borel subset $ B $ of $ X \times Y $ such that $ A $ is the projection of $ B $ on the first coordinate. The complement of an analytic set is called a \emph{coanalytic set} (or a \emph{$ \mathbf{\Pi}^{1}_{1} $ set}).

A subset $ A $ of a standard Borel space $ X $ is called a \emph{$ \mathbf{\Sigma}^{1}_{2} $ set} if there exist a standard Borel space $ Y $ and a coanalytic subset $ B $ of $ X \times Y $ such that $ A $ is the projection of $ B $ on the first coordinate. The complement of a $ \mathbf{\Sigma}^{1}_{2} $ set is called a \emph{$ \mathbf{\Pi}^{1}_{2} $ set}.

Let $ \Gamma $ be a class of sets in standard Borel spaces (for example $ \mathbf{\Sigma}^{1}_{1} $ or $ \mathbf{\Pi}^{1}_{2} $). A subset $ A $ of a standard Borel space $ X $ is called a \emph{$ \Gamma $-hard set} if every $ \Gamma $ subset $ B $ of a standard Borel space $ Y $ admits a Borel mapping $ f : Y \to X $ such that $ f^{-1}(A) = B $. A subset $ A $ of a standard Borel space $ X $ is called a \emph{$ \Gamma $-complete set} if it is $ \Gamma $ and $ \Gamma $-hard at the same time.

A $ \mathbf{\Sigma}^{1}_{1} $-hard ($ \mathbf{\Sigma}^{1}_{1} $-complete, $ \mathbf{\Pi}^{1}_{1} $-hard, $ \mathbf{\Pi}^{1}_{1} $-complete) set may be called also \emph{hard analytic (complete analytic, hard coanalytic, complete coanalytic)}.

We note that the introduced notion of a hard (complete) set is suitable for classes like $ \mathbf{\Sigma}^{1}_{1} $ or $ \mathbf{\Pi}^{1}_{2} $ but not for Borel classes in Polish spaces. In that case, only a zero-dimensional $ Y $ and a continuous $ f $ are considered.

Let us recall a standard simple argument for $ \Gamma $-hardness of a set.

\begin{lemma} \label{lemmhardset}
Let $ A \subset X $ and $ C \subset Z $ be subsets of standard Borel spaces $ X $ and $ Z $. Assume that $ C $ is $ \Gamma $-hard. If there is a Borel mapping $ g : Z \to X $ such that
$$ g(z) \in A \quad \Leftrightarrow \quad z \in C, $$
then $ A $ is $ \Gamma $-hard as well.
\end{lemma}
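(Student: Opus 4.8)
The plan is to simply unwind the definition of $\Gamma$-hardness and reduce to $C$ by pre-composition, so that $g$ transports reductions from $Z$ to $X$. Fix an arbitrary standard Borel space $Y$ together with a $\Gamma$ subset $B \subset Y$; the goal is to produce a Borel map $f : Y \to X$ with $f^{-1}(A) = B$. The first thing I would record is the one-line reformulation of the hypothesis: the equivalence $g(z) \in A \Leftrightarrow z \in C$ says precisely that $g^{-1}(A) = C$, since $z \in g^{-1}(A)$ holds iff $g(z) \in A$, which holds iff $z \in C$.

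Next, since $C$ is assumed $\Gamma$-hard, I would apply the definition of hardness directly to the given $\Gamma$ set $B \subset Y$. This yields a Borel map $h : Y \to Z$ with $h^{-1}(C) = B$. I then set $f := g \circ h : Y \to X$ and check the two requirements. For Borel measurability, the composition of Borel maps between standard Borel spaces is Borel, because for every Borel set $D \subset X$ the preimage $f^{-1}(D) = h^{-1}(g^{-1}(D))$ is Borel: $g^{-1}(D)$ is Borel as $g$ is Borel, and then its $h$-preimage is Borel as $h$ is Borel. For the reduction identity, I compute $f^{-1}(A) = h^{-1}\bigl(g^{-1}(A)\bigr) = h^{-1}(C) = B$, using the reformulation of the hypothesis and the defining property of $h$. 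Since $B$ was an arbitrary $\Gamma$ subset of an arbitrary standard Borel space, this establishes that $A$ is $\Gamma$-hard.

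I do not anticipate any genuine obstacle: the statement is a purely formal transitivity property of Borel reducibility, and the argument is uniform in the class $\Gamma$ (it never uses which class $\Gamma$ actually is, only the definition of hardness). The only two points that deserve an explicit word are that Borel maps are closed under composition and that the logical equivalence in the hypothesis is exactly the set-theoretic equality $g^{-1}(A) = C$; both are immediate, so the proof is essentially the single line $f = g \circ h$ together with these verifications.
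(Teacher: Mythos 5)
Your proof is correct and is exactly the standard composition-of-reductions argument that the paper invokes when it states this lemma without proof: reformulate the hypothesis as $g^{-1}(A) = C$, pull back a reduction $h$ witnessing the $\Gamma$-hardness of $C$, and take $f = g \circ h$. Nothing is missing, and the two side remarks you verify (closure of Borel maps under composition and the equivalence being set equality of preimages) are precisely the right points to record.
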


For a topological space $ X $, we denote by $ \mathcal{F}(X) $ the family of all closed subsets of $ X $ and by $ \mathcal{K}(X) $ the family of all compact subsets of $ X $.

The \emph{hyperspace of compact subsets of $ X $} is defined as $ \mathcal{K}(X) $ equipped with the \emph{Vietoris topology}, i.e., the topology generated by the sets of the form 
$$ \{ K \in \mathcal{K}(X) : K \subset U \}, $$
$$ \{ K \in \mathcal{K}(X) : K \cap U \neq \emptyset \}, $$
where $ U $ varies over open subsets of $ X $. If $ X $ is Polish, then so is $ \mathcal{K}(X) $.

We will need the following classical result (see e.g. \cite[(27.4)]{kechris}).

\begin{theorem}[Hurewicz] \label{thmhur}
If $ X $ is Polish and $ D \subset X $ is $ G_{\delta} $ but not $ F_{\sigma} $, then $ \{ K \in \mathcal{K}(X) : K \cap D \neq \emptyset \} $ is complete analytic.
\end{theorem}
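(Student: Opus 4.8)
The plan is to verify the two defining properties of a complete analytic set separately: that $\{ K \in \mathcal{K}(X) : K \cap D \neq \emptyset \}$ is analytic, and that it is $\mathbf{\Sigma}^{1}_{1}$-hard. Analyticity is the routine half. Since $D$ is $G_{\delta}$, it is Borel, and the membership relation $\{ (K,x) \in \mathcal{K}(X) \times X : x \in K \}$ is closed; hence $\{ (K,x) : x \in K \text{ and } x \in D \}$ is Borel, and our set, being its projection onto $\mathcal{K}(X)$, is analytic.

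For hardness I would reduce the canonical $\mathbf{\Sigma}^{1}_{1}$-complete set $\mathrm{IF}$ of ill-founded trees on $\mathbb{N}$, appealing to Lemma~\ref{lemmhardset}: it suffices to produce a Borel map $T \mapsto K_{T}$ from the Polish space $\mathrm{Tr}$ of trees on $\mathbb{N}$ into $\mathcal{K}(X)$ with $K_{T} \cap D \neq \emptyset$ precisely when $T$ has an infinite branch. I would build this as a composition of two steps. The first, and decisive, step is the classical Hurewicz embedding lemma, which converts the hypothesis that $D$ is $G_{\delta}$ but not $F_{\sigma}$ into a concrete geometric configuration: there is a homeomorphic embedding $e \colon 2^{\mathbb{N}} \to X$ onto a compact set $e(2^{\mathbb{N}}) \subseteq X$ such that, writing $Q = \{ \sigma \in 2^{\mathbb{N}} : \sigma(n) = 0 \text{ for all but finitely many } n \}$ for the countable dense set of eventually null sequences, one has $e^{-1}(D) = 2^{\mathbb{N}} \setminus Q$; equivalently, $e$ carries $Q$ into $X \setminus D$ and its complement into $D$. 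I expect this to be the main obstacle. It rests on the fact that the $F_{\sigma}$ set $X \setminus D$ fails to be $G_{\delta}$, from which one extracts, by a Cantor scheme built against a fixed representation $X \setminus D = \bigcup_{n} F_{n}$ into closed sets together with a Baire category argument, a Cantor set $P \subseteq X$ inside which $P \cap (X \setminus D)$ is countable and dense; a homeomorphism $2^{\mathbb{N}} \to P$ matching $Q$ with $P \cap (X \setminus D)$ then does the job.

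Granting $e$, I would reduce to the canonical Cantor model. The pushforward $K \mapsto e(K)$ is a continuous map $\mathcal{K}(2^{\mathbb{N}}) \to \mathcal{K}(X)$, and since $e$ is injective with $e^{-1}(D) = 2^{\mathbb{N}} \setminus Q$ we have $e(K) \cap D \neq \emptyset \Leftrightarrow K \cap (2^{\mathbb{N}} \setminus Q) \neq \emptyset$. Thus it is enough to send $T$ to a compact $K_{T} \subseteq 2^{\mathbb{N}}$ with $K_{T} \cap (2^{\mathbb{N}} \setminus Q) \neq \emptyset$ iff $T \in \mathrm{IF}$. Here I would code each node $s = (s_{0}, \dots, s_{k-1}) \in \mathbb{N}^{<\mathbb{N}}$ by the binary word $0^{s_{0}}10^{s_{1}}1\cdots0^{s_{k-1}}1$ (namely $s_{0}$ zeros, a one, $s_{1}$ zeros, a one, and so on), let $x_{s} \in Q$ be this word padded with zeros, and set $K_{T} = \overline{\{ x_{s} : s \in T \}}$, a closed, hence compact, subset of $2^{\mathbb{N}}$ consisting of eventually null points together with their limits. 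The coding is prefix-monotone, so a point of $K_{T}$ has infinitely many ones exactly when it is the limit of $x_{s_{n}}$ along nodes $s_{n}$ sharing ever longer common initial segments, which forces a common branch; hence $K_{T}$ meets $2^{\mathbb{N}} \setminus Q$ iff $T$ has an infinite branch, giving the required equivalence. Finally, $T \mapsto K_{T}$ is Borel: the preimage of a subbasic Vietoris set $\{ K : K \cap U \neq \emptyset \}$ is $\bigcup_{s : x_{s} \in U} \{ T : s \in T \}$, which is open, while the preimage of $\{ K : K \subseteq U \}$ is $\bigcup_{k} \{ T : \mathrm{dist}(x_{s}, 2^{\mathbb{N}} \setminus U) \geq 1/k \text{ for every } s \in T \}$, which is $F_{\sigma}$. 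Composing the two steps and invoking Lemma~\ref{lemmhardset} with the classical $\mathbf{\Sigma}^{1}_{1}$-completeness of $\mathrm{IF}$ yields the hardness, completing the proof.
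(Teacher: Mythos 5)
Your proof is correct, and since the paper does not prove this theorem at all — it quotes it as a classical result with the citation \cite[(27.4)]{kechris} — there is nothing to diverge from: your argument is essentially the standard one behind that reference, namely using the Hurewicz embedding lemma to reduce to the model case $X = 2^{\mathbb{N}}$, $D = 2^{\mathbb{N}} \setminus Q$, and then reducing the $\mathbf{\Sigma}^{1}_{1}$-complete set $\mathrm{IF}$ via the tree coding $T \mapsto \overline{\{ x_{s} : s \in T \}}$, whose correctness (prefix-monotonicity forcing a branch through downward closure of $T$, and Borelness via the Vietoris subbasis) you verify accurately. The one ingredient you quote rather than prove — that an $F_{\sigma}$ non-$G_{\delta}$ set traces a countable dense subset on some Cantor set, combined with countable dense homogeneity of $2^{\mathbb{N}}$ — is stated correctly and is exactly the classical Hurewicz dichotomy, so relying on it is on the same footing as the paper's own citation.
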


The set $ \mathcal{F}(X) $ of all closed subsets of $ X $ can be equipped with the \emph{Effros Borel structure}, defined as the $ \sigma $-algebra generated by the sets
$$ \{ F \in \mathcal{F}(X) : F \cap U \neq \emptyset \}, $$
where $ U $ varies over open subsets of $ X $. If $ X $ is Polish, then, equipped with this $ \sigma $-algebra, $ \mathcal{F}(X) $ forms a standard Borel space.

It is well-known that the space $ C([0, 1]) $ contains an isometric copy of every separable Banach space. By the \emph{standard Borel space of separable Banach spaces} we mean
$$ \mathcal{SE}(C([0, 1])) = \big\{ F \in \mathcal{F}(C([0, 1])) : \textrm{$ F $ is linear} \big\}, $$
considered as a subspace of $ \mathcal{F}(C([0, 1])) $.

Whenever we say that a class of separable Banach spaces has a property like being analytic, complete analytic, $ \mathbf{\Pi}^{1}_{2} $-complete etc., we consider the class as a subset of $ \mathcal{SE}(C([0, 1])) $.

By $ c_{00}(\Lambda) $ we denote the vector space of all systems $ x = \{ x(\lambda) \}_{\lambda \in \Lambda} $ of scalars such that $ x(\lambda) = 0 $ for all but finitely many $ \lambda $'s. By the canonical basis of $ c_{00}(\Lambda) $ we mean the algebraic basis consisting of vectors $ \mathbf{1}_{\{\lambda\}}, \lambda \in \Lambda $. Instead of $ c_{00}(\mathbb{N}) $, we write simply $ c_{00} $.

In the context of Banach spaces, by a basis we mean a Schauder basis. A basis $ \{ x_{i} \}_{i=1}^{\infty} $ of a Banach space $ X $ is said to be \emph{$ 1 $-unconditional} if $ \Vert \sum_{i \in A} a_{i}x_{i} \Vert \leq \Vert \sum_{i \in B} a_{i}x_{i} \Vert $ whenever $ A \subset B $ are finite sets of natural numbers and $ a_{i} \in \mathbb{R} $ for $ i \in B $.

A basis $ \{ x_{i} \}_{i=1}^{\infty} $ of a Banach space $ X $ is said to be \emph{shrinking} if
$$ X^{*} = \overline{\mathrm{span}} \{ x_{1}^{*}, x_{2}^{*}, \dots \} $$
where $ x_{1}^{*}, x_{2}^{*}, \dots $ is the dual basic sequence $ x_{n}^{*} : \sum_{i=1}^{\infty} a_{i}x_{i} \mapsto a_{n} $. The basis $ \{ x_{i} \}_{i=1}^{\infty} $ is called \emph{boundedly complete} if $ \sum_{i=1}^{\infty} a_{i}x_{i} $ is convergent whenever the sequence of its partial sums is bounded.

Let us recall a classical criterion of reflexivity (see e.g. \cite[Theorem~6.11]{fhhmpz}).

\begin{theorem}[James] \label{thmjames}
Let $ X $ be a Banach space with a basis $ \{ x_{i} \}_{i=1}^{\infty} $. Then $ X $ is reflexive if and only if $ \{ x_{i} \}_{i=1}^{\infty} $ is shrinking and boundedly complete.
\end{theorem}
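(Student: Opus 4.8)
The plan is to prove both implications using the canonical projections $ P_n \colon \sum_i a_i x_i \mapsto \sum_{i=1}^{n} a_i x_i $ associated with the basis, whose norms are bounded by the basis constant $ K = \sup_n \Vert P_n \Vert < \infty $, together with the coordinate functionals $ x_n^* $. I will freely use that $ P_n x \to x $ in norm for every $ x \in X $ and that bounded sequences in a reflexive space have weakly convergent subsequences (Eberlein--\v{S}mulian).

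For the forward implication, assume $ X $ is reflexive. To see that the basis is shrinking, suppose some $ x^* \in X^* $ satisfies $ \Vert x^* \circ (I - P_n) \Vert \not\to 0 $; then I can extract a successive block sequence $ \{ y_k \} $ with $ \Vert y_k \Vert \leq 1 $ and $ x^*(y_k) \geq \varepsilon > 0 $. Reflexivity furnishes a weakly convergent subsequence $ y_{k_j} $, whose limit is annihilated by every $ x_i^* $ (the supports of the $ y_k $ tend to infinity) and hence is $ 0 $; this contradicts $ x^*(y_{k_j}) \to 0 $ versus $ x^*(y_k) \geq \varepsilon $. For bounded completeness, if $ s_n = \sum_{i=1}^{n} a_i x_i $ stays bounded, a weakly convergent subsequence $ s_{n_j} \rightharpoonup x $ satisfies $ x_i^*(x) = a_i $ for every $ i $, so $ P_n x = s_n $, and since $ P_n x \to x $ in norm the series $ \sum_i a_i x_i $ converges.

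For the reverse implication, assume the basis is shrinking and boundedly complete, and show the canonical embedding $ J \colon X \to X^{**} $ is onto. By the definition of shrinking, $ \{ x_i^* \} $ spans a dense subspace of $ X^* $, so it is a Schauder basis of $ X^* $. Fix $ x^{**} \in X^{**} $, put $ a_i = x^{**}(x_i^*) $ and $ s_n = \sum_{i=1}^{n} a_i x_i $. The key computation is that the finite-rank operator $ P_n^{**} $ maps $ x^{**} $ into $ J(X) $, and a direct calculation using $ P_n^* x_i^* = x_i^* $ for $ i \leq n $ and $ P_n^* x_i^* = 0 $ for $ i > n $ identifies $ P_n^{**} x^{**} = J s_n $; consequently $ \Vert s_n \Vert = \Vert P_n^{**} x^{**} \Vert \leq K \Vert x^{**} \Vert $, so the partial sums are bounded. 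Bounded completeness then yields $ x = \sum_i a_i x_i \in X $, and $ x_i^*(x) = a_i = x^{**}(x_i^*) $ for all $ i $; since $ \{ x_i^* \} $ is dense in $ X^* $, this forces $ J x = x^{**} $, proving reflexivity.

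The main obstacle is the reverse direction, and within it the bookkeeping identifying $ P_n^{**} x^{**} $ with $ J s_n $: one must verify that shrinking genuinely upgrades $ \{ x_i^* \} $ to a basis of all of $ X^* $ and that the adjoint projections act as the natural truncations, before the uniform bound $ \Vert s_n \Vert \leq K \Vert x^{**} \Vert $ and bounded completeness can be brought to bear.
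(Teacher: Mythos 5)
The paper gives no proof of this statement at all — it is quoted as a classical criterion of James with a reference to \cite{fhhmpz} — so there is no in-paper argument to diverge from, and your proof is precisely the standard textbook one. It is correct: the forward direction via blocking and Eberlein--\v{S}mulian (with the harmless normalization $\Vert (I-P_n)z_n\Vert \leq 1+K$ absorbed into $\varepsilon$), and the converse via the identification $P_n^{**}x^{**} = J s_n$, the uniform bound $\Vert s_n \Vert \leq K \Vert x^{**} \Vert$, bounded completeness, and density of $\mathrm{span}\{x_i^*\}$ in $X^*$ (note that only this density is needed at the last step; upgrading $\{x_i^*\}$ to a full Schauder basis of $X^*$, while true, is not required).
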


The remainder of this section is devoted to the proof of the following preliminary result.

\begin{lemma} \label{lemmselect}
Let $ \Xi $ be a standard Borel space and let $ \{ (X_{\xi}, \Vert \cdot \Vert_{\xi}) \}_{\xi \in \Xi} $ be a system of Banach spaces each member of which contains a sequence $ x^{\xi}_{1}, x^{\xi}_{2}, \dots $ whose linear span is dense in $ X_{\xi} $. Assume that the function
$$ \xi \; \mapsto \; \Big\Vert \sum_{k=1}^{n} \lambda_{k} x^{\xi}_{k} \Big\Vert_{\xi} $$
is Borel whenever $ n \in \mathbb{N} $ and $ \lambda_{1}, \dots, \lambda_{n} \in \mathbb{R} $. Then there exists a Borel mapping $ \sel : \Xi \to \mathcal{SE}(C([0, 1])) $ such that $ \sel(\xi) $ is isometric to $ X_{\xi} $ for every $ \xi \in \Xi $.
\end{lemma}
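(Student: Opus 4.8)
The goal is to produce, in a Borel way, an isometric copy of each $X_{\xi}$ inside $C([0,1])$. The plan is to build the isometry through a concrete, parametrized realization of each $X_{\xi}$ as a space of continuous functions, exploiting that the norms of finite rational combinations of the $x^{\xi}_{k}$ are Borel in $\xi$. First I would reduce to dealing with a countable dense skeleton: since $\mathbb{Q}$ is countable, the family of functions $\xi \mapsto \Vert \sum_{k=1}^{n} q_{k} x^{\xi}_{k} \Vert_{\xi}$ ranging over all $n$ and all rational tuples $(q_{1},\dots,q_{n})$ is a countable family of Borel functions, and this data determines $X_{\xi}$ up to isometry. The point is that the unit ball $B_{X_{\xi}^{*}}$, with its weak$^{*}$ topology, together with the evaluation pairing, gives a canonical isometric embedding of $X_{\xi}$ into $C(B_{X_{\xi}^{*}})$; what must be arranged is that this embedding target be a single fixed Polish space, namely $C([0,1])$, and that the map be Borel in the parameter.

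The key steps, in order, are as follows. First, fix a countable dense subset $D \subset [0,1]$ (say the dyadic rationals) and enumerate the rational finite combinations $v^{\xi}_{m} = \sum_{k} q^{m}_{k} x^{\xi}_{k}$ of the generators, indexed by $m \in \mathbb{N}$ independently of $\xi$; the hypothesis says each $\xi \mapsto \Vert v^{\xi}_{m} \Vert_{\xi}$ is Borel. Second, I would select, for each $\xi$, a countable set of norming functionals: by the Hahn--Banach theorem one can choose for each $m$ a functional witnessing the norm of $v^{\xi}_{m}$, but to keep things Borel I would instead use a Kuratowski--Ryll-Nardzewski type measurable selection, or more concretely build the embedding directly from the norm data via a universal construction. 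The cleanest route is to define a map $\Phi_{\xi}$ from the rational skeleton into $C(K)$ for a fixed compact metrizable $K$ (a copy of the Cantor set or $[0,1]$) by sending $v^{\xi}_{m}$ to a function whose values encode the pairings $\langle v^{\xi}_{m}, \varphi \rangle$ as $\varphi$ ranges over a weak$^{*}$-dense sequence of the dual ball; since $C(2^{\mathbb{N}})$ embeds isometrically into $C([0,1])$, landing in $C(2^{\mathbb{N}})$ suffices. Third, I would verify that the resulting assignment $\xi \mapsto \overline{\Phi_{\xi}(\{v^{\xi}_{m}\})}$, a closed linear subspace of $C([0,1])$, is isometric to $X_{\xi}$ (because the skeleton is dense and norms are preserved) and that the map into $\mathcal{F}(C([0,1]))$ is Borel, i.e.\ that $\{\xi : \sel(\xi) \cap U \neq \emptyset\}$ is Borel for each basic open $U$; this reduces, via the density of the skeleton and continuity of the norm, to Borel measurability of the functions $\xi \mapsto \Vert v^{\xi}_{m} \Vert_{\xi}$ already granted.

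The main obstacle I expect is the \emph{Borel measurability of the selection of embeddings}, not the isometry itself. Producing one isometric copy of a given separable space in $C([0,1])$ is classical, but doing it \emph{uniformly and Borel-measurably} in the parameter $\xi$ requires that every choice made along the way be a Borel (or measurable) function of $\xi$ rather than an arbitrary application of Hahn--Banach. The way to defeat this is to avoid choices entirely by working with the full countable rational skeleton and the complete metric it inherits: the abstract completion of $(\mathrm{span}_{\mathbb{Q}}\{x^{\xi}_{k}\}, \Vert\cdot\Vert_{\xi})$ is determined functorially by the Borel norm data, so one can realize all these completions simultaneously inside a single universal $C([0,1])$ by a formula, and then the Borelness of $\sel$ follows from the Borelness of the countably many norm functions together with a standard characterization of Borel maps into $\mathcal{F}(C([0,1]))$ via the sets $\{F : F \cap U \neq \emptyset\}$. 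Checking this last characterization against the basic Vietoris-type generators of the Effros Borel structure is the technical heart of the argument.
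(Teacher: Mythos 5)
There is a genuine gap, and it sits exactly where you predicted it would: your proposal names the obstacle (Borel-uniform selection of the embeddings) but does not actually overcome it. Two concrete problems. First, sending $v^{\xi}_{m}$ to ``a function whose values encode the pairings $\langle v^{\xi}_{m}, \varphi\rangle$ as $\varphi$ ranges over a weak$^{*}$-dense sequence of the dual ball'' produces, as stated, an element of $\ell_{\infty}(\mathbb{N})$, not of $C(2^{\mathbb{N}})$: values prescribed on a countable index set need not extend continuously to the Cantor set. To land in a fixed $C(2^{\mathbb{N}})$ you need a continuous surjection of $2^{\mathbb{N}}$ onto $B_{X_{\xi}^{*}}$ (weak$^{*}$), chosen Borel-measurably in $\xi$ --- and even before invoking a Kuratowski--Ryll-Nardzewski selection you must verify that $\xi \mapsto B_{X_{\xi}^{*}}$, coded say as a closed subset of $[-1,1]^{\mathbb{N}}$ via normalized pairings with the rational skeleton, is a Borel map into $\mathcal{K}([-1,1]^{\mathbb{N}})$; that verification needs a Helly/Hahn--Banach finite-dimensional criterion expressed through the countably many norm functions, and none of this appears in your outline. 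Second, the assertion that the completion is ``determined functorially by the Borel norm data, so one can realize all these completions simultaneously inside a single universal $C([0,1])$ by a formula'' is circular: there is no choice-free canonical isometric embedding of an abstract separable Banach space into $C([0,1])$, and producing such a formula Borel in $\xi$ is precisely the content of the lemma. By contrast, the final measurability check you call the ``technical heart'' --- testing $\{\xi : \sel(\xi) \cap U \neq \emptyset\}$ against basic open $U$ --- is routine once $\xi \mapsto \Phi_{\xi}(v^{\xi}_{m})$ is known to be Borel; the real heart is upstream, in constructing $\Phi_{\xi}$ at all.

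For comparison, the paper sidesteps dual balls entirely. It embeds into the Gurariy space $\G \subset C([0,1])$ using the Kubi\'s--Solecki lemma: fixing for each $n$ a dense sequence $u_{n,1}, u_{n,2}, \dots$ in $\mathcal{L}(\mathbb{R}^{n}, \G)$ (after reducing to the case where $x^{\xi}_{1}, \dots, x^{\xi}_{n}$ are linearly independent, so these induce operators on $\mathrm{span}\{x^{\xi}_{1},\dots,x^{\xi}_{n}\}$), it recursively selects the \emph{least} index $j_{n}(\xi)$ making $u^{\xi}_{n, j_{n}(\xi)}$ a $2^{-n}$-isometry within $2 \cdot 2^{-n}$ of the previous stage. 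The least-index (``leftmost branch'') device converts all choices into Borel functions, since the relevant sets of parameters are Borel by hypothesis; then $f_{\xi}(x^{\xi}_{k})$ is a pointwise limit of Borel maps and $\sel(\xi) = \overline{\mathrm{span}}\{f_{\xi}(x^{\xi}_{k}) : k \in \mathbb{N}\}$. Your route can in fact be salvaged --- both missing selection facts (Borelness of $\xi \mapsto B_{X_{\xi}^{*}}$ and a canonical, Borel-in-the-set continuous surjection of $2^{\mathbb{N}}$ onto a variable compactum, e.g.\ via a canonical retraction onto closed subsets of $2^{\mathbb{N}}$ pulled back through a fixed surjection $2^{\mathbb{N}} \to [-1,1]^{\mathbb{N}}$) are provable --- but writing them out is at least as much work as the paper's argument, whereas your current text leaves them as black boxes.
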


We need to recall some definitions first. Let $ \varepsilon > 0 $ and let $ X, Y $ be Banach spaces. A linear operator $ f : X \to Y $ is called an \emph{$ \varepsilon $-isometry} if
$$ (1 + \varepsilon)^{-1} \Vert x \Vert < \Vert f(x) \Vert < (1 + \varepsilon) \Vert x \Vert, \quad x \in X \setminus \{ 0 \}. $$

A separable Banach space $ \G $ is called \emph{Gurariy} if, for every $ \varepsilon > 0 $, every finite-dimensional Banach spaces $ X $ and $ Y $ with $ X \subset Y $ and every isometry $ f : X \to \G $, there exists some $ \varepsilon $-isometry $ g : Y \to \G $ which extends $ f $. It is known that there exists only one Gurariy space up to isometry (\cite{lusky}, see also \cite{kubsol}).

\begin{lemma}[Kubi\'s, Solecki] \label{lemmkubsol}
Let $ X_{0} $ and $ X_{1} $ be finite-dimensional Banach spaces with $ X_{0} \subset X_{1} $ and let $ f : X_{0} \to \G $ be a $ 2^{-n} $-isometry. Then there is a $ 2^{-(n+1)} $-isometry $ g : X_{1} \to \G $ such that $ \Vert g|_{X_{0}} - f \Vert < 2 \cdot 2^{-n} $.
\end{lemma}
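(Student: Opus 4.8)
The final statement to prove is Lemma~\ref{lemmkubsol}, the Kubi\'s--Solecki extension lemma, which asserts that an approximate isometry from a finite-dimensional subspace into the Gurariy space can be extended to a slightly worse approximate isometry on a larger finite-dimensional space, while controlling how much the restriction of the extension moves from the original map. The plan is to reduce the problem to the defining extension property of $\G$ by first perturbing the given $2^{-n}$-isometry $f$ into a genuine isometry on a suitably renormed copy of $X_0$, then applying the Gurariy property to extend, and finally translating the estimates back.

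First I would set $\varepsilon = 2^{-n}$ and observe that $f : X_0 \to \G$ satisfies $(1+\varepsilon)^{-1}\Vert x \Vert < \Vert f(x)\Vert < (1+\varepsilon)\Vert x\Vert$. The idea is to regard $f$ as an actual isometry onto its image if we equip $X_0$ with the pullback norm $\vert x \vert := \Vert f(x)\Vert$. This new norm is $(1+\varepsilon)$-equivalent to the original norm on $X_0$. The subspace $(X_0, \vert\cdot\vert)$ sits isometrically inside $\G$ via $f$. To extend to $X_1$, I would place on $X_1$ an auxiliary norm that restricts to $\vert\cdot\vert$ on $X_0$ and is close to the original norm of $X_1$; a standard way is to take an interpolated or maximal norm agreeing with $\vert\cdot\vert$ on $X_0$ and dominated appropriately by $\Vert\cdot\Vert$ on $X_1$. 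Then $f$ becomes a true isometry from the renormed $X_0$ into $\G$, and the Gurariy property yields, for any prescribed tolerance $\delta$, a $\delta$-isometry $g$ from the renormed $X_1$ into $\G$ extending $f$ exactly.

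Next I would chase the constants. Choosing $\delta$ small enough (comparable to $2^{-(n+1)}$) and combining the $(1+2^{-n})$-equivalence between $\vert\cdot\vert$ and $\Vert\cdot\Vert_{X_1}$ with the $\delta$-isometry estimate for $g$ relative to the auxiliary norm, I would obtain that $g$ is a $2^{-(n+1)}$-isometry with respect to the original norm of $X_1$. Since $g$ extends $f$ on the renormed $X_0$ and the renorming only rescales within a factor $1+2^{-n}$, the restriction $g\vert_{X_0}$ agrees with $f$ up to the distortion introduced by passing between $\vert\cdot\vert$ and $\Vert\cdot\Vert$; bounding this distortion gives $\Vert g\vert_{X_0} - f\Vert < 2\cdot 2^{-n}$. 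The operator-norm bound follows because on each $x \in X_0$ the vectors $g(x)$ and $f(x)$ differ by a quantity controlled linearly by $(1+2^{-n}) - (1+2^{-n})^{-1} \le 2\cdot 2^{-n}$ times $\Vert x\Vert$.

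The main obstacle I expect is the bookkeeping in the renorming step: one must construct an intermediate norm on $X_1$ that genuinely restricts to the pullback norm $\vert\cdot\vert$ on $X_0$ (so that $f$ is an exact isometry there and the Gurariy extension applies verbatim) while staying uniformly $(1+2^{-n})$-close to the true norm of $X_1$, and then verify that the composition of the two approximations (the renorming distortion and the $\delta$-isometry error of $g$) multiplies out to stay within $1+2^{-(n+1)}$ rather than degrading. This is the delicate quantitative heart of the argument; everything else is a direct invocation of the definition of the Gurariy space together with elementary norm-equivalence inequalities.
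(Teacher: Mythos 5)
Your overall strategy --- renorm $X_{0}$ by the pullback norm $|x| = \Vert f(x) \Vert$, extend this norm to $X_{1}$, apply the Gurariy property to the now-exact isometry $f$, and chase constants --- contains a fatal flaw, and it sits precisely at the step you defer as ``bookkeeping.'' If $g$ extends $f$ exactly, then $g|_{X_{0}} = f$, so $g$ being a $2^{-(n+1)}$-isometry with respect to the \emph{original} norms would force $f$ itself to satisfy $(1 + 2^{-(n+1)})^{-1} \Vert x \Vert < \Vert f(x) \Vert < (1 + 2^{-(n+1)}) \Vert x \Vert$ on $X_{0}$; but $f$ is only assumed to be a $2^{-n}$-isometry and may genuinely violate the better bound (take $X_{0}$ one-dimensional with $\Vert f(x) \Vert$ near $(1+2^{-n})^{-1} \Vert x \Vert$). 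The same failure shows quantitatively in your constant chase: a $\delta$-isometry with respect to an auxiliary norm that is only $(1+2^{-n})$-equivalent to the true norm is, with respect to the true norm, roughly a $\bigl((1+2^{-n})(1+\delta) - 1\bigr)$-isometry, and $(1+2^{-n})(1+\delta) > 1 + 2^{-n} > 1 + 2^{-(n+1)}$: distortions multiply, so this route can never get \emph{below} the initial accuracy $2^{-n}$. The entire point of the lemma is that the extension is \emph{not} exact --- the allowance $\Vert g|_{X_{0}} - f \Vert < 2 \cdot 2^{-n}$ is the price paid for the improved isometry constant, and any scheme in which $g|_{X_{0}} = f$ (making that bound trivially $0$) cannot produce the improvement.

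For the record, the paper itself does not prove this lemma (it cites Kubi\'s and Solecki), and their argument supplies exactly the idea you are missing. One first corrects the $2^{-n}$-isometry into an exact isometry at the cost of enlarging the \emph{target}: build an amalgam (push-out) $W$ in which both $\G$ and $X_{1}$ embed isometrically and in which the two copies of $X_{0}$ --- the one inside $X_{1}$, via $i_{X_{1}}$, and the image $f(X_{0}) \subset \G$ --- satisfy $\Vert i_{X_{1}}(x) - f(x) \Vert_{W} \leq \varepsilon' \Vert x \Vert$ with $\varepsilon'$ of order $2^{-n}$. One then applies the Gurariy property not to $f$ but to the \emph{exact} isometry $\mathrm{id} : f(X_{0}) \to \G$, inside the finite-dimensional superspace $F = f(X_{0}) + i_{X_{1}}(X_{1}) \subset W$, with tolerance $2^{-(n+1)}$; this yields a $2^{-(n+1)}$-isometry $h : F \to \G$ fixing $f(X_{0})$, and $g = h \circ i_{X_{1}}$ is the desired map. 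For $x \in X_{0}$ one gets $\Vert g(x) - f(x) \Vert = \Vert h(i_{X_{1}}(x)) - h(f(x)) \Vert \leq (1 + 2^{-(n+1)}) \, \varepsilon' \Vert x \Vert < 2 \cdot 2^{-n} \Vert x \Vert$. Note how the quantifiers differ from your plan: the Gurariy tolerance can be taken arbitrarily small because the map being extended is an exact isometry into $\G$, while the unavoidable error of size $2^{-n}$ is absorbed into the displacement of $g$ from $f$ rather than into the isometry constant of $g$.
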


This lemma is proven in \cite{kubsol} and its purpose is to show that $ \G $ contains an isometric copy of every separable Banach space $ X $. Actually, once the lemma is proven, an isometry $ f : X \to \G $ can be found easily. Let $ x_{1}, x_{2}, \dots $ be a dense sequence in $ X $ and let $ X_{n} = \mathrm{span} \, \{ x_{1}, \dots, x_{n} \} $. Then Lemma~\ref{lemmkubsol} allows us to construct a sequence of linear operators $ f_{n} : X_{n} \to \G $ such that $ f_{n} $ is a $ 2^{-n} $-isometry and $ \Vert f_{n+1}|_{X_{n}} - f_{n} \Vert < 2 \cdot 2^{-n} $. For $ x \in X_{m} $, the sequence $ \{ f_{n}(x) \}_{n \geq m} $ is Cauchy. The isometry $ f_{\infty}(x) = \lim_{n \to \infty} f_{n}(x) $ can be extended from $ \bigcup_{m=1}^{\infty} X_{m} $ to an isometry $ f : X \to \G $.

To prove Lemma~\ref{lemmselect}, we use a leftmost branch argument to show that this construction can be accomplished in a Borel measurable way.

\begin{proof}[Proof of Lemma \ref{lemmselect}]
We establish two additional assumptions which make the situation a bit simpler.

(1) We assume that all spaces $ X_{\xi} $ are infinite-dimensional. This is possible, because $ \Xi $ can be decomposed into Borel sets $ \Xi_{d} = \{ \xi \in \Xi : \mathrm{dim} \, X_{\xi} = d \} $ where $ 0 \leq d \leq \infty $. As these sets are Borel, we can deal with every $ \Xi_{d} $ separately. We consider only $ \Xi_{\infty} $ since other $ \Xi_{d} $'s can be handled in a similar way.

(2) We assume moreover that $ x^{\xi}_{n} $ does not belong to the linear span of $ x^{\xi}_{1}, \dots, x^{\xi}_{n-1} $. This is possible, because the sets
$$ \big\{ \xi \in \Xi : x^{\xi}_{n} \in \mathrm{span} \{ x^{\xi}_{1}, \dots, x^{\xi}_{n-1} \} \big\}, \quad n \in \mathbb{N}, $$
are Borel, and so omitting the members which are a linear combination of its predecessors does not disrupt the assumption of the lemma (the resulting sequence will be infinite due to the first additional assumption).

Now, for every $ n \in \mathbb{N} $, let $ u_{n, 1}, u_{n, 2}, \dots $ be a sequence which is dense in the space $ \mathcal{L}(\mathbb{R}^{n}, \G) $ of linear operators from $ \mathbb{R}^{n} $ into $ \G $. Let us define
$$ X^{\xi}_{n} = \mathrm{span} \{ x^{\xi}_{1}, \dots, x^{\xi}_{n} \} $$
and consider operators
$$ u^{\xi}_{n, i} : X^{\xi}_{n} \to \G, \quad u^{\xi}_{n, i} \Big( \sum_{k=1}^{n} \lambda_{k} x^{\xi}_{k} \Big) = u_{n, i} \big( \lambda_{1}, \dots, \lambda_{n} \big). $$
The operators are well defined since we assume that $ x^{\xi}_{1}, \dots, x^{\xi}_{n} $ are linearly independent. Notice that every $ u \in \mathcal{L}(X^{\xi}_{n}, \G) $ can be approximated by some $ u^{\xi}_{n, i} $ with an arbitrarily small error. Therefore, we obtain for every $ \xi \in \Xi $ that
\begin{itemize}
\item there exists $ j \in \mathbb{N} $ such that $ u^{\xi}_{1, j} : X^{\xi}_{1} \to \G $ is a $ 2^{-1} $-isometry,
\item if $ u^{\xi}_{n, i} : X^{\xi}_{n} \to \G $ is a $ 2^{-n} $-isometry, then Lemma~\ref{lemmkubsol} provides $ j \in \mathbb{N} $ such that $ u^{\xi}_{n+1, j} : X^{\xi}_{n+1} \to \G $ is a $ 2^{-(n+1)} $-isometry and $ \Vert u^{\xi}_{n+1, j}|_{X^{\xi}_{n}} - u^{\xi}_{n, i} \Vert < 2 \cdot 2^{-n} $.
\end{itemize}

Using the assumption of the lemma, it is straightforward to show that the sets
\begin{align*}
A_{j} = \big\{ \xi \in \Xi : \; & \textrm{$ u^{\xi}_{1, j} $ is a $ 2^{-1} $-isometry} \big\}, \\
B_{n,i,j} = \big\{ \xi \in \Xi : \; & \textrm{$ u^{\xi}_{n+1, j} $ is a $ 2^{-(n+1)} $-isometry} \\
 & \textrm{and $ \Vert u^{\xi}_{n+1, j}|_{X^{\xi}_{n}} - u^{\xi}_{n, i} \Vert < 2 \cdot 2^{-n} $} \big\},
\end{align*}
are Borel for every $ n, i, j \in \mathbb{N} $.

We define recursively Borel functions $ j_{n} : \Xi \to \mathbb{N}, n = 1, 2, \dots, $ as follows. These functions are required to satisfy

(i) $ u^{\xi}_{n,j_{n}(\xi)} : X^{\xi}_{n} \to \G $ is a $ 2^{-n} $-isometry,

(ii) $ \Vert u^{\xi}_{n+1,j_{n+1}(\xi)}|_{X^{\xi}_{n}} - u^{\xi}_{n,j_{n}(\xi)} \Vert < 2 \cdot 2^{-n} $. 

Let $ j_{1}(\xi) $ be the least natural number $ j $ such that $ u^{\xi}_{1, j} : X^{\xi}_{1} \to \G $ is a $ 2^{-1} $-isometry. We already know that such a number exists. The function $ j_{1} $ is Borel, as
$$ (j_{1})^{-1}(\{ j \}) = A_{j} \setminus \bigcup_{l=1}^{j-1} A_{l}. $$
Assuming that $ j_{n}(\xi) $ is defined, let $ j_{n+1}(\xi) $ be the least natural number $ j $ such that $ u^{\xi}_{n+1, j} : X^{\xi}_{n+1} \to \G $ is a $ 2^{-(n+1)} $-isometry and $ \Vert u^{\xi}_{n+1, j}|_{X^{\xi}_{n}} - u^{\xi}_{n, j_{n}(\xi)} \Vert < 2 \cdot 2^{-n} $. We already know that such a number exists. The function $ j_{n+1} $ is Borel, as
$$ (j_{n+1})^{-1}(\{ j \}) = \bigcup_{i=1}^{\infty} \bigg[ (j_{n})^{-1}(\{ i \}) \cap \Big( B_{n,i,j} \setminus \bigcup_{l=1}^{j-1} B_{n,i,l} \Big) \bigg]. $$

Our next step is to define an isometry $ f_{\xi} : X_{\xi} \to \G $. For $ x \in X^{\xi}_{m} $, the sequence $ \{ u^{\xi}_{n,j_{n}(\xi)}(x) \}_{n \geq m} $ is Cauchy, since $ \Vert u^{\xi}_{n+1,j_{n+1}(\xi)}(x) - u^{\xi}_{n,j_{n}(\xi)}(x) \Vert \leq 2 \cdot 2^{-n} \Vert x \Vert_{\xi} $ for $ n \geq m $ by (ii). Therefore, we can put
$$ f_{\xi}(x) = \lim_{n\to\infty} u^{\xi}_{n,j_{n}(\xi)}(x), \quad x \in \bigcup_{m=1}^{\infty} X^{\xi}_{m}. $$
Using (i), we obtain
$$ (1 + 2^{-n})^{-1} \Vert x \Vert_{\xi} \leq \Vert u^{\xi}_{n,j_{n}(\xi)}(x) \Vert \leq (1 + 2^{-n}) \Vert x \Vert_{\xi}, \quad x \in X^{\xi}_{m}, \; n \geq m. $$
It follows that $ \Vert f_{\xi}(x) \Vert = \Vert x \Vert_{\xi} $ for $ x \in \bigcup_{m=1}^{\infty} X^{\xi}_{m} $ and so that there is a unique extension $ f_{\xi} : X_{\xi} \to \G $ satisfying $ \Vert f_{\xi}(x) \Vert = \Vert x \Vert_{\xi} $ for every $ x \in X_{\xi} $.

Let us realize that the mapping
$$ \chi_{k} : \Xi \to \G, \quad \chi_{k}(\xi) = f_{\xi}(x^{\xi}_{k}), $$
is Borel for every $ k \in \mathbb{N} $. Since
$$ \chi_{k}(\xi) = f_{\xi}(x^{\xi}_{k}) = \lim_{n\to\infty} u^{\xi}_{n,j_{n}(\xi)}(x^{\xi}_{k}) = \lim_{n\to\infty} u_{n,j_{n}(\xi)}(0, \dots, 0, \underset{k}{1}, 0, \dots, \underset{n}{0}), $$
the mapping $ \chi_{k} $ is the pointwise limit of a sequence of Borel mappings.

Finally, let us define the desired mapping $ \sel $. We may suppose that $ \G $ is a subspace of $ C([0, 1]) $. This allows us to define
$$ \sel : \Xi \to \mathcal{SE}(C([0, 1])), \quad \sel(\xi) = f_{\xi}(X_{\xi}), \quad \xi \in \Xi. $$
Since $ \sel $ fulfills the formula
$$ \sel(\xi) = \overline{\mathrm{span}} \, \{ \chi_{1}(\xi), \chi_{2}(\xi), \dots \}, $$
it is straightforward to show that it is a Borel mapping.
\end{proof}

\section{Tsirelson type spaces} \label{sec:tsirelson}

In this section, we will use Tsirelson type spaces introduced by S.~A.~Argyros and I.~Deliyanni \cite{argdel} to show that the class of spaces embeddable into $ c_{0} $ is not Borel (Theorem~\ref{thmc0subsp}). Those spaces are obtained by a generalization of the notion of an admissible family.

In fact, our approach is slightly different from the approach of Argyros and Deliyanni. The space defined below is derived from Tsirelson's original example $ \Tss $, not from its dual $ \Ts $ (some comments on spaces derived from $ \Ts $ are provided in Remark~\ref{remTs}). Moreover, we consider even more general systems of admissible families, including systems which lead to spaces quite different from $ \Tss $ (see Lemma~\ref{lemmaembc0}). In spite of this, for our purposes, we use the symbol $ \Tss $ also for these non Tsirelson-like spaces.

Throughout this paper, we identify elements of $ 2^{\mathbb{N}} $ with subsets of $ \mathbb{N} $. For this reason, members of $ \mathcal{K}(2^{\mathbb{N}}) $ represent systems of subsets of $ \mathbb{N} $.

Let $ e_{1}, e_{2}, \dots $ be the canonical basis of $ c_{00} $ (i.e., $ e_{n} = \mathbf{1}_{\{ n \}} $). Let us recall that we denote $ Ex = \mathbf{1}_{E} \cdot x $ for $ E \subset \mathbb{N} $ and $ x \in c_{00} $.

For $ \mathcal{M} \in \mathcal{K}(2^{\mathbb{N}}) $, a family $ \{ E_{1}, \dots, E_{n} \} $ of successive finite subsets of $ \mathbb{N} $ is said to be \emph{$ \mathcal{M} $-admissible} if an element of $ \mathcal{M} $ contains numbers $ m_{1}, \dots, m_{n} $ such that
$$ m_{1} \leq E_{1} < m_{2} \leq E_{2} < \dots < m_{n} \leq E_{n}. $$
The system of all $ \mathcal{M} $-admissible families is denoted by $ \mathrm{adm}(\mathcal{M}) $.

\begin{definition}
For $ \mathcal{M} \in \mathcal{K}(2^{\mathbb{N}}) $, let $ \Theta_\mathcal{M} $ be the smallest absolutely convex subset of $ c_{00} $ containing every $ e_{i}, i \in \mathbb{N}, $ and satisfying the property
$$ \{ E_{1}, \dots, E_{n} \} \in \mathrm{adm}(\mathcal{M}) \; \& \; x_{1}, \dots, x_{n} \in \Theta_\mathcal{M} \quad \Rightarrow \quad \frac{1}{2} \sum_{k=1}^{n} E_{k}x_{k} \in \Theta_\mathcal{M}. $$
Let $ \Vert \cdot \Vert_\mathcal{M} $ be the Minkowski gauge of $ \Theta_\mathcal{M} $ and let $ \Tss[\mathcal{M}, \frac{1}{2}] $ be a completion of $ (c_{00}, \Vert \cdot \Vert_\mathcal{M}) $.
\end{definition}

First, we introduce without proof some simple facts about $ \Tss[\mathcal{M}, \frac{1}{2}] $.

\begin{fact} \label{fact1uncon}
The sequence $ e_{1}, e_{2}, \dots $ is a $ 1 $-unconditional basis of $ \Tss[\mathcal{M}, \frac{1}{2}] $.
\end{fact}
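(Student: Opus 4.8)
The plan is to verify the two defining properties of a $1$-unconditional basis directly from the construction of $\Theta_{\mathcal{M}}$, exploiting its symmetry under coordinatewise sign changes and restrictions. First I would observe that $\Theta_{\mathcal{M}}$ is invariant under the two natural operations that control unconditionality: for any sign pattern $\varepsilon \in \{-1,1\}^{\mathbb{N}}$, the map $x \mapsto \varepsilon \cdot x$ (coordinatewise multiplication) preserves $\Theta_{\mathcal{M}}$, and for any $E \subset \mathbb{N}$, the restriction map $x \mapsto Ex$ also preserves $\Theta_{\mathcal{M}}$. Both invariances follow by the same smallest-set argument: the set of all $x$ with $\varepsilon \cdot x \in \Theta_{\mathcal{M}}$ (respectively $Ex \in \Theta_{\mathcal{M}}$) is itself absolutely convex, contains every $e_i$ (since $\varepsilon \cdot e_i = \pm e_i$ and $Ee_i$ is either $e_i$ or $0$), and is closed under the admissible averaging operation $\frac{1}{2}\sum_k E_k x_k$; by minimality of $\Theta_{\mathcal{M}}$ it must contain all of $\Theta_{\mathcal{M}}$.

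Once these invariances are in place, the rest is routine. Sign-invariance of $\Theta_{\mathcal{M}}$ translates immediately into sign-invariance of its Minkowski gauge, so $\Vert \varepsilon \cdot x \Vert_{\mathcal{M}} = \Vert x \Vert_{\mathcal{M}}$ for every sign pattern $\varepsilon$. For the monotonicity inequality defining $1$-unconditionality, I would take finite sets $A \subset B$ and scalars $a_i$ for $i \in B$, set $x = \sum_{i \in B} a_i e_i$, and note that $\sum_{i \in A} a_i e_i = Ax$. The restriction-invariance of $\Theta_{\mathcal{M}}$ yields $\Vert Ax \Vert_{\mathcal{M}} \leq \Vert x \Vert_{\mathcal{M}}$: indeed, if $x \in t\Theta_{\mathcal{M}}$ for some $t > 0$, then $Ax \in t\Theta_{\mathcal{M}}$ as well, so the gauge cannot increase under restriction. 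This is exactly the required estimate $\Vert \sum_{i \in A} a_i x_i \Vert \leq \Vert \sum_{i \in B} a_i x_i \Vert$ once we identify $x_i = e_i$.

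I would also need to confirm that $e_1, e_2, \dots$ is genuinely a Schauder basis of $\Tss[\mathcal{M}, \frac{1}{2}]$ and not merely an unconditional family: the linear span of the $e_i$ is $c_{00}$, which is dense in the completion by definition, and the gauge is a genuine norm provided $\Theta_{\mathcal{M}}$ is bounded and contains a neighborhood of the origin in each finite-dimensional section. Boundedness in each coordinate follows because $\Vert x \Vert_{\mathcal{M}} \geq \Vert x \Vert_{\infty}$ (each $e_i \in \Theta_{\mathcal{M}}$ forces the gauge to dominate the sup norm via the individual coordinate functionals), which also shows the $e_i$ have norm $1$ and that the basis is normalized. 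The main obstacle, such as it is, lies in this last verification rather than in the unconditionality itself: one must check that the implicitly defined $\Theta_{\mathcal{M}}$ does not collapse the norm, i.e. that $\Vert \cdot \Vert_{\mathcal{M}}$ separates points of $c_{00}$. This is handled by the lower bound by $\Vert \cdot \Vert_{\infty}$ noted above, after which the projection constants $\Vert \sum_{i \in A} a_i e_i \Vert_{\mathcal{M}} \leq \Vert \sum_{i \in B} a_i e_i \Vert_{\mathcal{M}}$ for $A$ an initial segment of $B$ give that the partial-sum projections are contractive, confirming the Schauder basis property with basis constant $1$.
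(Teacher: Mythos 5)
Your argument is correct. Note first that the paper offers no proof to compare against: this statement is one of the facts explicitly ``introduced without proof,'' and your smallest-set invariance argument is the standard way to fill the gap. Two remarks. First, the paper's definition of $1$-unconditionality asks only for the monotonicity $\Vert \sum_{i \in A} a_{i} e_{i} \Vert_{\mathcal{M}} \leq \Vert \sum_{i \in B} a_{i} e_{i} \Vert_{\mathcal{M}}$ for finite $A \subset B$, so your restriction-invariance of $\Theta_{\mathcal{M}}$ already suffices; the sign-invariance is a harmless bonus. Second, your parenthetical justification of the lower bound $\Vert x \Vert_{\mathcal{M}} \geq \Vert x \Vert_{\infty}$ --- that ``each $e_{i} \in \Theta_{\mathcal{M}}$ forces the gauge to dominate the sup norm'' --- is stated backwards: membership of the $e_{i}$ in $\Theta_{\mathcal{M}}$ yields only the upper bound $\Vert e_{i} \Vert_{\mathcal{M}} \leq 1$. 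The correct route is your own minimality trick applied once more: the set $\{ x \in c_{00} : \Vert x \Vert_{\infty} \leq 1 \}$ is absolutely convex, contains every $e_{i}$, and is stable under the admissible operation, since the sets $E_{1} < \dots < E_{n}$ are disjoint and hence $\Vert \frac{1}{2} \sum_{k} E_{k} x_{k} \Vert_{\infty} \leq \frac{1}{2} \max_{k} \Vert x_{k} \Vert_{\infty} \leq 1$; by minimality $\Theta_{\mathcal{M}}$ is contained in the sup-norm unit ball, and the gauge dominates $\Vert \cdot \Vert_{\infty}$. This is a wording slip rather than a genuine gap --- the claim is true and provable by exactly the technique you already use --- and with it repaired, the remaining steps (separation of points, $\Vert e_{i} \Vert_{\mathcal{M}} = 1$, contractivity of the partial-sum projections, and passage to the completion) go through as you describe.
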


\begin{fact} \label{factsupineq}
If $ \{ E_{1}, \dots, E_{n} \} $ is $ \mathcal{M} $-admissible and $ x_{1}, \dots, x_{n} \in \Tss[\mathcal{M}, \frac{1}{2}] $, then
$$ \Big\Vert \sum_{k=1}^{n} E_{k}x_{k} \Big\Vert_\mathcal{M} \leq 2 \sup_{1 \leq k \leq n} \Vert x_{k} \Vert_\mathcal{M}. $$
\end{fact}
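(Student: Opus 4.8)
The plan is to read the inequality directly off the definition of the Minkowski gauge $ \Vert \cdot \Vert_{\mathcal{M}} $ together with the single closure property that defines $ \Theta_{\mathcal{M}} $. Write $ M = \sup_{1 \leq k \leq n} \Vert x_{k} \Vert_{\mathcal{M}} $. The idea is that scaling each $ x_{k} $ down by a factor slightly larger than $ M $ places it inside $ \Theta_{\mathcal{M}} $, after which the defining implication for $ \mathcal{M} $-admissible families produces the required membership and hence the bound.

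First I would reduce to the case $ x_{1}, \dots, x_{n} \in c_{00} $. Since $ \{ e_{i} \} $ is a $ 1 $-unconditional basis of $ \Tss[\mathcal{M}, \frac{1}{2}] $ (Fact~\ref{fact1uncon}), each restriction map $ x \mapsto Ex $ extends to a norm-one operator on the completion, so both sides of the claimed inequality depend continuously on $ (x_{1}, \dots, x_{n}) $; as $ c_{00} $ is dense and the family $ \{ E_{1}, \dots, E_{n} \} $ is fixed, it suffices to treat $ x_{k} \in c_{00} $, where the gauge is literally computed in $ \Theta_{\mathcal{M}} \subset c_{00} $.

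Next, fix $ \varepsilon > 0 $ and set $ y_{k} = x_{k} / (M + \varepsilon) $. Because $ \Theta_{\mathcal{M}} $ is absolutely convex and $ \Vert x_{k} \Vert_{\mathcal{M}} < M + \varepsilon $, the set $ \{ t > 0 : x_{k} \in t \Theta_{\mathcal{M}} \} $ is upward closed, so $ x_{k} \in (M + \varepsilon) \Theta_{\mathcal{M}} $, i.e. $ y_{k} \in \Theta_{\mathcal{M}} $ (the case $ x_{k} = 0 $ being trivial since $ 0 \in \Theta_{\mathcal{M}} $). Now $ \{ E_{1}, \dots, E_{n} \} $ is $ \mathcal{M} $-admissible and $ y_{1}, \dots, y_{n} \in \Theta_{\mathcal{M}} $, so the defining property of $ \Theta_{\mathcal{M}} $ gives
$$ \frac{1}{2} \sum_{k=1}^{n} E_{k} y_{k} = \frac{1}{2(M + \varepsilon)} \sum_{k=1}^{n} E_{k} x_{k} \in \Theta_{\mathcal{M}}. $$
By the definition of the Minkowski gauge this means $ \big\Vert \sum_{k=1}^{n} E_{k} x_{k} \big\Vert_{\mathcal{M}} \leq 2(M + \varepsilon) $, and letting $ \varepsilon \to 0 $ yields the assertion.

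There is no substantial obstacle in this argument; the only points requiring a word of care are the passage from $ c_{00} $ to the completion (handled by $ 1 $-unconditionality and continuity) and the use of an $ \varepsilon $ to circumvent the fact that the infimum defining the gauge need not be attained. Everything else is an immediate unwinding of the definitions of $ \Theta_{\mathcal{M}} $ and of its Minkowski gauge.
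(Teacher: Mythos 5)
Your proof is correct: the paper states Fact~\ref{factsupineq} without proof (it is listed among the ``simple facts'' introduced without proof), and your argument --- scaling by $ M + \varepsilon $ to place each $ x_{k} $ in $ \Theta_{\mathcal{M}} $, applying the defining closure property for $ \mathcal{M} $-admissible families, and reading off the gauge bound --- is exactly the routine unwinding of the definitions that the paper implicitly relies on. The two points you flag, the density/continuity reduction to $ c_{00} $ via $ 1 $-unconditionality and the $ \varepsilon $ to handle the possibly unattained infimum, are indeed the only spots needing care, and you handle both correctly.
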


\begin{fact} \label{factTsscontin}
If $ \mathcal{M}_{1}, \mathcal{M}_{2} \in \mathcal{K}(2^{\mathbb{N}}) $ are two systems such that
$$ \big\{ A \cap \{ 1, \dots, \ell \} : A \in \mathcal{M}_{1} \big\} = \big\{ A \cap \{ 1, \dots, \ell \} : A \in \mathcal{M}_{2} \big\}, $$
then $ \Vert x \Vert_{\mathcal{M}_{1}} = \Vert x \Vert_{\mathcal{M}_{2}} $ for every $ x \in \mathrm{span} \{ e_{1}, e_{2}, \dots, e_{\ell} \} $.
\end{fact}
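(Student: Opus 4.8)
The plan is to reduce the computation of $ \Vert x \Vert_{\mathcal{M}} $, for $ x \in \mathrm{span}\{ e_{1}, \dots, e_{\ell} \} $, to a construction that only sees the sets $ A \cap \{ 1, \dots, \ell \} $ for $ A \in \mathcal{M} $. To this end I would introduce the restriction operator $ R_{\ell} : c_{00} \to c_{00} $, $ R_{\ell} x = \{ 1, \dots, \ell \} x $, and the \emph{truncated} convex set $ \Theta_{\mathcal{M}}^{\ell} $, defined as the smallest absolutely convex subset of $ \mathrm{span}\{ e_{1}, \dots, e_{\ell} \} $ which contains $ e_{1}, \dots, e_{\ell} $ and is closed under the operation $ \frac{1}{2} \sum_{k=1}^{n} E_{k} x_{k} $ restricted to those admissible families $ \{ E_{1}, \dots, E_{n} \} \in \mathrm{adm}(\mathcal{M}) $ all of whose members satisfy $ E_{k} \subseteq \{ 1, \dots, \ell \} $. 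The whole argument then splits into two claims: (A) for every $ x \in \mathrm{span}\{ e_{1}, \dots, e_{\ell} \} $ the Minkowski gauge of $ \Theta_{\mathcal{M}} $ at $ x $ coincides with the gauge of $ \Theta_{\mathcal{M}}^{\ell} $ at $ x $; and (B) the set $ \Theta_{\mathcal{M}}^{\ell} $ depends only on the trace $ \{ A \cap \{ 1, \dots, \ell \} : A \in \mathcal{M} \} $.

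For (A), the inclusion $ \Theta_{\mathcal{M}}^{\ell} \subseteq \Theta_{\mathcal{M}} $ is immediate from minimality and gives one inequality between the gauges. The reverse inequality is the heart of the matter and I would obtain it from the key inclusion $ R_{\ell}(\Theta_{\mathcal{M}}) \subseteq \Theta_{\mathcal{M}}^{\ell} $: since $ R_{\ell} x = x $ when $ x $ is supported in $ \{ 1, \dots, \ell \} $, any representation of $ x / \lambda $ inside $ \Theta_{\mathcal{M}} $ is carried by $ R_{\ell} $ to a representation inside $ \Theta_{\mathcal{M}}^{\ell} $. I would prove this inclusion by writing $ \Theta_{\mathcal{M}} = \bigcup_{j} \Theta_{\mathcal{M}}^{(j)} $ as the increasing union of its natural generation layers (with $ \Theta_{\mathcal{M}}^{(0)} $ the absolutely convex hull of $ \{ e_{i} \} $ and each $ \Theta_{\mathcal{M}}^{(j+1)} $ the absolutely convex hull of $ \Theta_{\mathcal{M}}^{(j)} $ together with all $ \frac{1}{2} \sum_{k} E_{k} x_{k} $ formed from $ x_{k} \in \Theta_{\mathcal{M}}^{(j)} $), and inducting on $ j $. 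The linearity of $ R_{\ell} $ handles absolutely convex combinations, and the identity $ R_{\ell}\bigl( \frac{1}{2} \sum_{k} E_{k} x_{k} \bigr) = \frac{1}{2} \sum_{k} (E_{k} \cap \{ 1, \dots, \ell \})\,(R_{\ell} x_{k}) $ reduces the admissible step to the induction hypothesis.

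The main obstacle is the combinatorial core of this induction: one must check that truncating an $ \mathcal{M} $-admissible family to $ \{ 1, \dots, \ell \} $ again produces an $ \mathcal{M} $-admissible family of sets inside $ \{ 1, \dots, \ell \} $. Here I would fix witnesses $ m_{1} \leq E_{1} < m_{2} \leq E_{2} < \dots < m_{n} \leq E_{n} $ from some $ A \in \mathcal{M} $. Because the family is successive, only a prefix $ E_{1}, \dots, E_{j} $ can meet $ \{ 1, \dots, \ell \} $, so the nonempty truncations are exactly $ E_{1}', \dots, E_{j}' $ with $ E_{k}' = E_{k} \cap \{ 1, \dots, \ell \} $. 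The same witnesses still work: $ m_{k} \leq \min E_{k} \leq \min E_{k}' $ gives $ m_{k} \leq E_{k}' $, and $ \max E_{k}' \leq \max E_{k} < m_{k+1} $ gives $ E_{k}' < m_{k+1} $, so $ \{ E_{1}', \dots, E_{j}' \} $ is $ \mathcal{M} $-admissible (witnessed inside the same $ A $) and each $ E_{k}' \subseteq \{ 1, \dots, \ell \} $. This is precisely what the truncated operation permits, closing the induction and hence (A).

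Finally, for (B) I would observe that a family $ \{ E_{1}, \dots, E_{n} \} $ with all $ E_{k} \subseteq \{ 1, \dots, \ell \} $ is $ \mathcal{M} $-admissible if and only if some $ A \in \mathcal{M} $ contains witnesses $ m_{1}, \dots, m_{n} $ as above; but these necessarily satisfy $ m_{k} \leq \min E_{k} \leq \ell $, so the condition $ m_{k} \in A $ is equivalent to $ m_{k} \in A \cap \{ 1, \dots, \ell \} $. Thus the collection of admissible families supported in $ \{ 1, \dots, \ell \} $, and with it the set $ \Theta_{\mathcal{M}}^{\ell} $, is determined by the trace $ \{ A \cap \{ 1, \dots, \ell \} : A \in \mathcal{M} \} $ alone. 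Combining (A) and (B), equality of the traces of $ \mathcal{M}_{1} $ and $ \mathcal{M}_{2} $ forces $ \Theta_{\mathcal{M}_{1}}^{\ell} = \Theta_{\mathcal{M}_{2}}^{\ell} $ and therefore $ \Vert x \Vert_{\mathcal{M}_{1}} = \Vert x \Vert_{\mathcal{M}_{2}} $ for all $ x \in \mathrm{span}\{ e_{1}, \dots, e_{\ell} \} $, as required.
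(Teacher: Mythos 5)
Your proof is correct. The paper states Fact~\ref{factTsscontin} without proof, introducing it among simple facts, so there is no argument of the author's to diverge from; your generation-by-generation induction establishing $ R_{\ell}(\Theta_{\mathcal{M}}) \subseteq \Theta_{\mathcal{M}}^{\ell} $ (with the prefix observation for successive families and reuse of the same witnesses $ m_{1}, \dots, m_{j} $), combined with the remark that any admissible family supported in $ \{1, \dots, \ell\} $ has all witnesses $ m_{k} \leq \ell $ and hence is determined by the trace $ \{ A \cap \{1, \dots, \ell\} : A \in \mathcal{M} \} $, is precisely the routine localization argument the paper implicitly relies on.
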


The following important property of $ \Tss[\mathcal{M}, \frac{1}{2}] $ follows from the proof of \cite[Proposition~1.1]{argdel}.

\begin{lemma}[Argyros, Deliyanni] \label{lemmaargdel}
If $ \mathcal{M} $ consists of finite sets only, then $ e_{1}, e_{2}, \dots $ is a boundedly complete basis of $ \Tss[\mathcal{M}, \frac{1}{2}] $, and so $ \Tss[\mathcal{M}, \frac{1}{2}] $ can not be embedded isomorphically into $ c_{0} $.
\end{lemma}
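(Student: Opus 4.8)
The strategy is to derive both conclusions from the single assertion that $ \Tss[\mathcal{M}, \frac{1}{2}] $ contains no isomorphic copy of $ c_{0} $, and to establish that by dualising to a norming description of the norm.

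First I would reduce bounded completeness to the absence of $ c_{0} $. Suppose, for contradiction, that $ e_{1}, e_{2}, \dots $ is not boundedly complete. Then there are scalars $ (a_{i}) $ with $ \sup_{N} \Vert \sum_{i \le N} a_{i} e_{i} \Vert_{\mathcal{M}} \le C < \infty $ for which $ \sum_{i} a_{i} e_{i} $ does not converge. Since the partial sums are bounded but not Cauchy, there are $ \delta > 0 $ and successive finite intervals $ I_{1} < I_{2} < \dots $ such that the blocks $ y_{k} = \sum_{i \in I_{k}} a_{i} e_{i} $ satisfy $ \Vert y_{k} \Vert_{\mathcal{M}} \ge \delta $; moreover $ \Vert y_{k} \Vert_{\mathcal{M}} \le C $ and $ \Vert \sum_{k=1}^{n} y_{k} \Vert_{\mathcal{M}} \le C $ for every $ n $, these being sub-sums of the bounded partial sums while the basis is $ 1 $-unconditional (Fact~\ref{fact1uncon}). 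A further use of $ 1 $-unconditionality gives $ \delta \max_{k} |b_{k}| \le \Vert \sum_{k} b_{k} y_{k} \Vert_{\mathcal{M}} \le C \max_{k} |b_{k}| $, so $ (y_{k}) $ is equivalent to the unit vector basis of $ c_{0} $. Hence it suffices to prove that $ \Tss[\mathcal{M}, \frac{1}{2}] $ contains no copy of $ c_{0} $, i.e. that no such block sequence exists.

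To exploit the admissibility structure I would pass to the dual norm $ \Vert x \Vert'_{\mathcal{M}} = \sup \{ \langle \phi, x \rangle : \phi \in \Theta_{\mathcal{M}} \} $, the support function of $ \Theta_{\mathcal{M}} $. As in Tsirelson's original duality it satisfies the implicit formula $ \Vert x \Vert'_{\mathcal{M}} = \max \{ \Vert x \Vert_{\infty}, \frac{1}{2} \sup \{ \sum_{k} \Vert E_{k} x \Vert'_{\mathcal{M}} : \{ E_{1}, \dots, E_{n} \} \in \mathrm{adm}(\mathcal{M}) \} \} $, and, being the gauge of $ \Theta_{\mathcal{M}} $, the norm $ \Vert \cdot \Vert_{\mathcal{M}} $ is dual to $ \Vert \cdot \Vert'_{\mathcal{M}} $; in particular, by the bipolar theorem, $ \Vert z \Vert_{\mathcal{M}} = \sup \{ \langle \phi, z \rangle : \Vert \phi \Vert'_{\mathcal{M}} \le 1 \} $. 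Taking blocks $ (y_{k}) $ as above and using this duality together with the (evident) $ 1 $-unconditionality of $ \Vert \cdot \Vert'_{\mathcal{M}} $, choose for each $ k $ a functional $ v_{k} $ with $ \Vert v_{k} \Vert'_{\mathcal{M}} \le 1 $, $ \mathrm{supp}\, v_{k} \subseteq I_{k} $ and $ \langle y_{k}, v_{k} \rangle \ge \delta / 2 $. Pairing $ \sum_{k=1}^{n} y_{k} $ with $ \sum_{k=1}^{n} v_{k} $ (the cross terms vanish by disjointness of supports) and using $ \Vert \sum_{k \le n} y_{k} \Vert_{\mathcal{M}} \le C $ yields
$$ \Big\Vert \sum_{k=1}^{n} v_{k} \Big\Vert'_{\mathcal{M}} \ \ge\ \frac{\delta}{2C}\, n \qquad (n \in \mathbb{N}). $$
So the problem reduces to showing that the initial sums of a fixed semi-normalised block sequence of $ (c_{00}, \Vert \cdot \Vert'_{\mathcal{M}}) $ cannot grow linearly. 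The heart of the matter, and the step where the hypothesis on $ \mathcal{M} $ is used, is the Tsirelson-type estimate that when $ \mathcal{M} $ consists of finite sets, every block sequence $ (v_{k}) $ with $ \Vert v_{k} \Vert'_{\mathcal{M}} \le 1 $ satisfies $ \Vert \sum_{k=1}^{n} v_{k} \Vert'_{\mathcal{M}} = o(n) $, contradicting the bound above; I expect this to be the main obstacle. The mechanism is that, for a \emph{fixed} block sequence, a single $ \mathcal{M} $-admissible family can only gather an initial segment of bounded length: a witnessing $ A \in \mathcal{M} $ must place an element at or before $ \min \mathrm{supp}\, v_{1} $, and since $ A $ is finite only boundedly many of its elements can be interleaved with the fixed supports $ \mathrm{supp}\, v_{1} < \mathrm{supp}\, v_{2} < \dots $. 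Gathering $ n $ blocks therefore forces one to iterate the operation $ x \mapsto \frac{1}{2} \sum E_{k} x_{k} $, each iteration costing the factor $ \frac{1}{2} $; the compactness of $ \mathcal{M} $ — equivalently, that it is a well-founded family carrying a Cantor--Bendixson rank — makes this iteration well-founded and forces the accumulated factors to depress the growth below any linear rate. This is precisely the estimate contained in the proof of \cite[Proposition~1.1]{argdel}. Finiteness is essential: an infinite $ A \in \mathcal{M} $ would let a single admissible family gather an arbitrarily long initial segment, giving $ \Vert \sum_{k=1}^{n} e_{k} \Vert'_{\mathcal{M}} \ge n/2 $ and a copy of $ \ell_{1} $, which matches the complementary situation (Lemma~\ref{lemmaembc0}) in which the space does embed into $ c_{0} $.

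Putting these together, $ \Tss[\mathcal{M}, \frac{1}{2}] $ contains no copy of $ c_{0} $, so by the first paragraph the basis $ e_{1}, e_{2}, \dots $ is boundedly complete. For the final assertion, a boundedly complete unconditional basis forbids any isomorphic copy of $ c_{0} $ inside the space, whereas every infinite-dimensional closed subspace of $ c_{0} $ does contain an isomorphic copy of $ c_{0} $. Were the (infinite-dimensional) space $ \Tss[\mathcal{M}, \frac{1}{2}] $ isomorphic to a subspace of $ c_{0} $, it would therefore contain $ c_{0} $, a contradiction; hence it cannot be embedded isomorphically into $ c_{0} $.
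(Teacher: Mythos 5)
Your opening reduction (not boundedly complete $\Rightarrow$ a block sequence equivalent to the unit vector basis of $c_{0}$, modulo the fact that the paper's suppression form of $1$-unconditionality gives the upper constant $2C$ rather than $C$) and your closing paragraph (a boundedly complete basis excludes $c_{0}$, while every infinite-dimensional subspace of $c_{0}$ contains $c_{0}$) are correct and standard. But the step you yourself call the heart of the matter is not merely unproven --- it is false. Take $\mathcal{M} = \{ A \subset \mathbb{N} : |A| \leq \min A \}$, the Schreier family, which is compact and consists of finite sets only. For each $n$, the family $ \{ \{n+1\}, \{n+2\}, \dots, \{2n\} \} $ is $\mathcal{M}$-admissible (witnessed by $A = \{n+1, \dots, 2n\}$), so $ \frac{1}{2}(e_{n+1} + \dots + e_{2n}) \in \Theta_{\mathcal{M}} $; pairing the coordinate functionals $v_{k} = e_{k}^{*}$ (a block sequence with $\Vert v_{k} \Vert'_{\mathcal{M}} = 1$) against this element of the unit ball gives $ \Vert \sum_{k=1}^{2n} v_{k} \Vert'_{\mathcal{M}} \geq \frac{n}{2} $. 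So semi-normalized dual block sequences have \emph{linearly} growing partial sums: these implicit-norm spaces are asymptotic $\ell_{1}$ --- that is the very point, and the title, of the Argyros--Deliyanni paper \cite{argdel}, so the $o(n)$ estimate you attribute to their Proposition~1.1 is the opposite of what it contains. Your heuristic fails at the claim that a witnessing $A \in \mathcal{M}$ "must place an element at or before $\min \mathrm{supp}\, v_{1}$": an admissible family need not gather the blocks starting from the first one, and by moving far out along the \emph{fixed} block sequence one can gather arbitrarily many blocks with a single large finite member of $\mathcal{M}$. Consequently the inequality you derive, $ \Vert \sum_{k \leq n} v_{k} \Vert'_{\mathcal{M}} \geq \frac{\delta}{2C} n $, is in tension with nothing and yields no contradiction. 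The problematic case is exactly the one the paper needs: the lemma is applied to $\mathcal{M}_{T}$ with $T$ well-founded, and such families in general contain finite sets of unbounded cardinality.

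What finiteness of the members of $\mathcal{M}$ actually rules out is not linear growth of dual block sums but an isomorphic copy of $\ell_{1}$ in $\Ts[\mathcal{M}, \frac{1}{2}]$ --- equivalently, that its basis is shrinking (cf.\ Remark~\ref{remTs}(i)); then $\Tss[\mathcal{M}, \frac{1}{2}]$ is the dual of $\Ts[\mathcal{M}, \frac{1}{2}]$ and carries the dual basis, which is automatically boundedly complete. This is the content of the proof of \cite[Proposition~1.1]{argdel} that the paper invokes in place of a self-contained argument. The distinction your sketch elides is precisely the substance of the lemma: an asymptotic-$\ell_{1}$ space has uniform lower $\ell_{1}$ estimates on admissibly placed blocks and yet need not contain $\ell_{1}$, and excluding $\ell_{1}$ requires a genuinely more delicate mechanism exploiting the compactness (well-founded rank) of $\mathcal{M}$ through convex combinations and averaging along the rank --- none of which appears in your outline. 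In short, the parts of your proposal surrounding the key step are fine but routine, while the key step itself, the only place where the hypothesis on $\mathcal{M}$ enters, is wrong as stated and would have to be replaced by the Argyros--Deliyanni argument rather than supported by it.
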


We are going to show that the space $ \Tss[\mathcal{M}, \frac{1}{2}] $ is considerably different from $ \Tss $ if $ \mathcal{M} $ contains an infinite set. This phenomenon is the heart of our argument.

\begin{lemma} \label{lemmaembc0}
If $ \mathcal{M} $ contains an infinite set, then $ \Tss[\mathcal{M}, \frac{1}{2}] $ is isomorphic to the $ c_{0} $-sum of a sequence of finite-dimensional spaces, and so $ \Tss[\mathcal{M}, \frac{1}{2}] $ can be embedded isomorphically into $ c_{0} $.
\end{lemma}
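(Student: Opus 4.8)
The plan is to produce an explicit $ c_{0} $-decomposition of $ \Tss[\mathcal{M}, \tfrac{1}{2}] $ whose blocks are dictated by an infinite member of $ \mathcal{M} $. Fix an infinite set $ A = \{ a_{1} < a_{2} < \dots \} \in \mathcal{M} $ and set $ I_{1} = \{ 1, \dots, a_{2} - 1 \} $ together with $ I_{j} = \{ a_{j}, \dots, a_{j+1} - 1 \} $ for $ j \geq 2 $, so that $ \{ I_{j} \}_{j=1}^{\infty} $ is a partition of $ \mathbb{N} $ into successive finite intervals with $ \min I_{j} = a_{j} \in A $ and $ \max I_{j} = a_{j+1} - 1 $ for every $ j \geq 2 $. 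Writing $ E_{j} = \mathrm{span} \{ e_{i} : i \in I_{j} \} $, I would show that the formal identity on $ c_{00} $ extends to an isomorphism of $ \Tss[\mathcal{M}, \tfrac{1}{2}] $ onto the $ c_{0} $-sum $ (\bigoplus_{j} E_{j})_{c_{0}} $; since a $ c_{0} $-sum of finite-dimensional spaces is isomorphic to a subspace of $ c_{0} $, this yields the conclusion.

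The lower estimate $ \max_{j} \Vert I_{j} x \Vert_{\mathcal{M}} \leq \Vert x \Vert_{\mathcal{M}} $ is immediate from the $ 1 $-unconditionality of the basis (Fact~\ref{fact1uncon}), since each $ I_{j} x $ is a restriction of $ x $. The whole point is the reverse estimate, and here the infinite set $ A $ enters decisively: for any $ N $, the family $ \{ I_{2}, I_{3}, \dots, I_{N} \} $ is $ \mathcal{M} $-admissible, as witnessed by $ A $ together with the marks $ a_{2}, a_{3}, \dots, a_{N} $, which satisfy $ a_{2} \leq I_{2} < a_{3} \leq I_{3} < \dots < a_{N} \leq I_{N} $ precisely because $ \min I_{j} = a_{j} $ and $ \max I_{j} = a_{j+1} - 1 $. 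Thus, assuming $ \max_{j} \Vert I_{j} x \Vert_{\mathcal{M}} \leq 1 $ and choosing $ N $ with $ \mathrm{supp}\, x \subseteq I_{1} \cup \dots \cup I_{N} $, each $ I_{j} x $ lies in $ (1 + \delta) \Theta_{\mathcal{M}} $, and applying the defining closure property of $ \Theta_{\mathcal{M}} $ to this admissible family (with the vectors $ I_{j} x / (1+\delta) $) gives $ \tfrac{1}{2(1+\delta)} \sum_{j=2}^{N} I_{j} x \in \Theta_{\mathcal{M}} $. Letting $ \delta \to 0 $ yields $ \Vert \sum_{j \geq 2} I_{j} x \Vert_{\mathcal{M}} \leq 2 $, whence $ \Vert x \Vert_{\mathcal{M}} \leq \Vert I_{1} x \Vert_{\mathcal{M}} + 2 \leq 3 $. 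By homogeneity, $ \Vert x \Vert_{\mathcal{M}} \leq 3 \max_{j} \Vert I_{j} x \Vert_{\mathcal{M}} $ for all $ x \in c_{00} $.

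Combining the two estimates shows that the identity map is a $ 3 $-isomorphism between $ (c_{00}, \Vert \cdot \Vert_{\mathcal{M}}) $ and the dense subspace $ c_{00} $ of $ (\bigoplus_{j} E_{j})_{c_{0}} $, and hence extends to an isomorphism of the completions. I expect the only delicate point to be the passage from $ \Vert y \Vert_{\mathcal{M}} \leq 1 $ to $ y \in \Theta_{\mathcal{M}} $ used to feed the vectors $ I_{j} x $ into the closure property: since $ \Vert \cdot \Vert_{\mathcal{M}} $ is the gauge of $ \Theta_{\mathcal{M}} $ one only has $ \{ \Vert \cdot \Vert_{\mathcal{M}} < 1 \} \subseteq \Theta_{\mathcal{M}} $, which is exactly why the slack factor $ (1 + \delta) $ and the limit $ \delta \to 0 $ appear above. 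Everything else is a routine verification, and the separate treatment of the initial block $ I_{1} $ (forced by the fact that $ \min I_{1} = 1 $ need not belong to $ A $) costs only the additive constant that turns the bound $ 2 $ into $ 3 $.
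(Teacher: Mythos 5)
Your proposal is correct and takes essentially the same route as the paper's proof: both partition $ \mathbb{N} $ into consecutive blocks determined by the infinite member of $ \mathcal{M} $, obtain the lower estimate from $ 1 $-unconditionality (Fact~\ref{fact1uncon}) and the upper estimate from the $ \mathcal{M} $-admissibility of the block family --- the paper by citing Fact~\ref{factsupineq}, you by reproving that fact inline via the gauge-plus-slack argument. The remaining differences (merging the paper's $ E_{0} $ and $ E_{1} $ into your $ I_{1} $, and working on $ c_{00} $ before passing to completions rather than directly in $ \Tss[\mathcal{M}, \frac{1}{2}] $ via the basis expansion) are cosmetic and yield the same equivalence constant.
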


\begin{proof}
Assuming that $ \{ m_{1} < m_{2} < \dots \} \in \mathcal{M} $, we show for every $ x \in \Tss[\mathcal{M}, \frac{1}{2}] $ that
$$ \sup_{k\in\mathbb{N} \cup \{ 0 \}} \Vert E_{k}x \Vert_\mathcal{M} \leq \Vert x \Vert_\mathcal{M} \leq \Vert E_{0}x \Vert_\mathcal{M} + 2 \sup_{k\in\mathbb{N}} \Vert E_{k}x \Vert_\mathcal{M} $$
where $ E_{0} = \{ 1, \dots, m_{1} - 1 \} $ and $ E_{k} = \{ m_{k}, \dots , m_{k+1} - 1 \} $. The first inequality follows from Fact~\ref{fact1uncon}. For $ n \in \mathbb{N} $, the family $ \{ E_{1}, \dots, E_{n} \} $ is $ \mathcal{M} $-admissible, and thus
$$ \Big\Vert \sum_{k=0}^{n} E_{k}x \Big\Vert_\mathcal{M} \leq \Vert E_{0}x \Vert_\mathcal{M} + \Big\Vert \sum_{k=1}^{n} E_{k}x \Big\Vert_\mathcal{M} \leq \Vert E_{0}x \Vert_\mathcal{M} + 2 \sup_{1 \leq k \leq n} \Vert E_{k}x \Vert_\mathcal{M} $$
by Fact~\ref{factsupineq}. Since $ x = \sum_{k=0}^{\infty} E_{k}x $, the remaining inequality follows.
\end{proof}

The following lemma, proof of which is essentially contained in \cite{argdel}, will be useful later.

\begin{lemma} \label{lemmshrink}
If $ \mathcal{M} $ contains all three-element sets, then $ e_{1}, e_{2}, \dots $ is a shrinking basis of $ \Tss[\mathcal{M}, \frac{1}{2}] $. In particular, if $ \mathcal{M} $ contains all three-element sets but it consists of finite sets only, then $ \Tss[\mathcal{M}, \frac{1}{2}] $ is reflexive.
\end{lemma}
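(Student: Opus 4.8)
The plan is to deduce shrinkingness from the standard criterion that a $1$-unconditional basis $(e_n)$ is shrinking if and only if every functional in the dual has vanishing tails. Concretely, for a functional $f$ and a set $F \subseteq \mathbb{N}$ put
$$ \|f\|_{F} := \sup \big\{ f(x) : x \in \Theta_\mathcal{M},\ \mathrm{supp}\,x \subseteq F \big\}, $$
so that $\|f\|_{\mathbb{N}} = \|f\|$, and note that, by Fact~\ref{fact1uncon} and the fact that $\Theta_\mathcal{M}$ is the unit ball, $\|f\|_{(N,\infty)}$ is exactly the norm of the tail of $f$; the basis is shrinking precisely when $\|f\|_{(N,\infty)} \to 0$ for every $f$ in the dual. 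I would argue by contradiction: assume $f$ with $\|f\| < \infty$ and $\delta > 0$ satisfy $\|f\|_{(N,\infty)} \geq \delta$ for all $N$, and show this forces $\|f\| = \infty$.

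The crucial structural observation — and the only place the hypothesis on three-element sets is used — is that when $\mathcal{M}$ contains all three-element sets, every triple of successive finite sets $E_1 < E_2 < E_3$ is automatically $\mathcal{M}$-admissible: taking $m_1 = \min E_1$, $m_2 = \min E_2$, $m_3 = \min E_3$ gives a three-element set witnessing $m_1 \leq E_1 < m_2 \leq E_2 < m_3 \leq E_3$. From this I record the local estimate that, for $E = E_1 \cup E_2 \cup E_3$,
$$ \|f\|_{E} \;\geq\; \tfrac{1}{2}\big( \|f\|_{E_1} + \|f\|_{E_2} + \|f\|_{E_3} \big), $$
which follows by choosing near-optimal $x_k \in \Theta_\mathcal{M}$ supported in $E_k$ and feeding $\tfrac{1}{2}\sum_k E_k x_k \in \Theta_\mathcal{M}$ into $f$.

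Next I would run a ternary-tree amplification. Using the tail assumption repeatedly, I build successive finite intervals $I_1 < I_2 < \dots < I_{3^m}$ with $\|f\|_{I_j} \geq \delta/2$ for each $j$ (having produced $I_1,\dots,I_j$, the bound $\|f\|_{(\max I_j,\infty)} \geq \delta$ supplies a finitely supported witness, hence a further interval $I_{j+1}$). Grouping into consecutive triples and applying the local estimate once collapses $3^m$ sets of mass $\geq \delta/2$ into $3^{m-1}$ sets of mass $\geq \tfrac{1}{2}\cdot 3 \cdot \tfrac{\delta}{2} = \tfrac{3}{2}\cdot\tfrac{\delta}{2}$; iterating $m$ times yields a single set of mass $\geq (\tfrac{3}{2})^{m}\tfrac{\delta}{2}$. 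Since $\|f\| \geq \|f\|_{I_1 \cup \dots \cup I_{3^m}}$, letting $m \to \infty$ contradicts $\|f\| < \infty$. The decisive feature is the gain factor $\tfrac{1}{2}\times 3 = \tfrac{3}{2} > 1$, which is exactly why three-element (rather than two-element) sets are required.

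I expect the main obstacle to be bookkeeping rather than conceptual: I must check that the sets obtained by unioning consecutive triples remain successive finite sets, so that admissibility and hence the local estimate reapply at every level of the tree, and I must carefully tie the abstract quantity $\|f\|_{F}$ (a supremum over $\Theta_\mathcal{M}$) to the genuine tail operator norm in the shrinking criterion. Both are routine given $1$-unconditionality and the description of $\Theta_\mathcal{M}$ as the unit ball. The final ``in particular'' clause is then immediate: if $\mathcal{M}$ also consists of finite sets only, the basis is boundedly complete by Lemma~\ref{lemmaargdel} and shrinking by the above, so $\Tss[\mathcal{M},\frac{1}{2}]$ is reflexive by James' theorem (Theorem~\ref{thmjames}).
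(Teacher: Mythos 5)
Your proof is correct, and its engine is identical to the paper's: because $ \mathcal{M} $ contains all three-element sets, any three successive finite sets $ E_{1} < E_{2} < E_{3} $ are $ \mathcal{M} $-admissible (witnessed by $ m_{k} = \min E_{k} $), so the defining property of $ \Theta_{\mathcal{M}} $ (equivalently Fact~\ref{factsupineq}) lets a functional's mass on the union of three successive blocks dominate $ \frac{1}{2} $ of the sum of the three masses --- a gain of $ \frac{3}{2} > 1 $. Where you diverge is in how this gain is spent. The paper uses it exactly once, with tuned constants: it sets $ \varepsilon > 0 $ equal to the (non-increasing, hence convergent) limit of the tail norms, picks $ m_{1} $ past which the tail functional has norm $ < \frac{4}{3}\varepsilon $, and finds three blocks beyond $ m_{1} $ each carrying mass $ > \frac{8}{9}\varepsilon $; since $ 3 \cdot \frac{8}{9}\varepsilon = \frac{8}{3}\varepsilon = 2 \cdot \frac{4}{3}\varepsilon $ and the inequalities are strict, a single application of admissibility gives the contradiction. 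You instead iterate a cruder estimate: a uniform lower bound $ \delta/2 $ on the masses of $ 3^{m} $ blocks amplifies to $ (\frac{3}{2})^{m}\,\delta/2 $ after $ m $ rounds of grouping consecutive triples, contradicting $ \Vert f \Vert < \infty $. Your route buys robustness --- you never need the block masses to approach the actual tail norm, only to be bounded below --- at the cost of the bookkeeping you yourself flag, which does check out: unions of consecutive triples remain successive finite sets, the $ \min $-witnesses again certify admissibility, and the level-$(r+1)$ witness $ \frac{1}{2}(y_{1} + y_{2} + y_{3}) $ lies again in $ \Theta_{\mathcal{M}} $ with the right support, so the local estimate re-applies at every level. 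The identification of $ \Vert f \Vert_{(N, \infty)} $ with the genuine tail operator norm is indeed routine here, since $ \Theta_{\mathcal{M}} \subset c_{00} $ consists of finitely supported vectors and $ \{ x : \Vert x \Vert_{\mathcal{M}} < 1 \} \subset \Theta_{\mathcal{M}} $ by the gauge definition; and your disposal of the \textquotedblleft in particular\textquotedblright{} clause via Lemma~\ref{lemmaargdel} and James' criterion (Theorem~\ref{thmjames}) is exactly the paper's.
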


\begin{proof}
Let $ e^{*}_{1}, e^{*}_{2}, \dots $ be the dual basic sequence. Given $ x^{*} \in (\Tss[\mathcal{M}, \frac{1}{2}])^{*} $, we want to show that
$$ x^{*} = \sum_{i=1}^{\infty} x^{*}(e_{i}) e^{*}_{i}. $$
Assume the opposite, i.e., that
$$ \varepsilon = \lim_{n \to \infty} \Big\Vert x^{*} - \sum_{i=1}^{n-1} x^{*}(e_{i}) e^{*}_{i} \Big\Vert_\mathcal{M} > 0 $$
(we note that the sequence under the limit is non-increasing, due to $ 1 $-unconditionality). Choose $ m_{1} \in \mathbb{N} $ so that
$$ \Big\Vert x^{*} - \sum_{i=1}^{m_{1}-1} x^{*}(e_{i}) e^{*}_{i} \Big\Vert_\mathcal{M} < \frac{4}{3} \, \varepsilon $$
and $ m_{2}, m_{3}, m_{4} $ so that
$$ \Big\Vert \sum_{i=m_{k}}^{m_{k+1}-1} x^{*}(e_{i}) e^{*}_{i} \Big\Vert_\mathcal{M} > \frac{8}{9} \, \varepsilon, \quad k = 1, 2, 3. $$
For $ k = 1, 2, 3 $, if we put $ E_{k} = \{ m_{k}, \dots , m_{k+1} - 1 \} $, then we can find $ x_{k} \in \Tss[\mathcal{M}, \frac{1}{2}], \Vert x_{k} \Vert_\mathcal{M} \leq 1, $ such that $ x^{*}(E_{k}x_{k}) > \frac{8}{9} \varepsilon $. By our assumption, $ \mathcal{M} $ contains $ \{ m_{1}, m_{2}, m_{3} \} $, and so the family $ \{ E_{1}, E_{2}, E_{3} \} $ is $ \mathcal{M} $-admissible. Hence, $ \Vert E_{1}x_{1} + E_{2}x_{2} + E_{3}x_{3} \Vert_\mathcal{M} \leq 2 $ by Fact~\ref{factsupineq}, and we obtain
$$ 3 \cdot \frac{8}{9} \; \varepsilon < x^{*}(E_{1}x_{1} + E_{2}x_{2} + E_{3}x_{3}) < \frac{4}{3} \, \varepsilon \cdot \Vert E_{1}x_{1} + E_{2}x_{2} + E_{3}x_{3} \Vert_\mathcal{M} \leq \frac{4}{3} \, \varepsilon \cdot 2, $$
a contradiction.

The second part of the statement follows from Lemma~\ref{lemmaargdel} and Theorem~\ref{thmjames}.
\end{proof}

\begin{lemma} \label{lemmselectTss}
There exists a Borel mapping $ \sel : \mathcal{K}(2^{\mathbb{N}}) \to \mathcal{SE}(C([0, 1])) $ such that $ \sel(\mathcal{M}) $ is isometric to $ \Tss[\mathcal{M}, \frac{1}{2}] $ for every $ \mathcal{M} \in \mathcal{K}(2^{\mathbb{N}}) $.
\end{lemma}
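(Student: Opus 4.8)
The plan is to invoke the Borel selection principle of Lemma~\ref{lemmselect}. I would take $ \Xi = \mathcal{K}(2^{\mathbb{N}}) $, set $ X_\mathcal{M} = \Tss[\mathcal{M}, \frac{1}{2}] $, and use as the distinguished dense sequence the canonical basis $ e_{1}, e_{2}, \dots $ of $ c_{00} $, which is independent of $ \mathcal{M} $. Since $ \Tss[\mathcal{M}, \frac{1}{2}] $ is by definition a completion of $ (c_{00}, \Vert \cdot \Vert_\mathcal{M}) $, the linear span of the $ e_{k} $ is dense, so the structural hypothesis of Lemma~\ref{lemmselect} holds automatically. The entire content of the proof therefore reduces to verifying the single measurability assumption of that lemma, namely that for every $ n \in \mathbb{N} $ and all $ \lambda_{1}, \dots, \lambda_{n} \in \mathbb{R} $ the function
$$ \mathcal{M} \; \mapsto \; \Big\Vert \sum_{k=1}^{n} \lambda_{k} e_{k} \Big\Vert_\mathcal{M} $$
is Borel on $ \mathcal{K}(2^{\mathbb{N}}) $.

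The key step, and the one that makes the whole argument work, is the observation that this norm depends only on a finite amount of information about $ \mathcal{M} $. Indeed, fix $ x = \sum_{k=1}^{n} \lambda_{k} e_{k} \in \mathrm{span} \{ e_{1}, \dots, e_{n} \} $. By Fact~\ref{factTsscontin} (applied with $ \ell = n $), the value $ \Vert x \Vert_\mathcal{M} $ is unchanged if $ \mathcal{M} $ is replaced by any other system having the same trace
$$ \tau_{n}(\mathcal{M}) = \big\{ A \cap \{ 1, \dots, n \} : A \in \mathcal{M} \big\} $$
on the initial segment $ \{ 1, \dots, n \} $. Thus $ \mathcal{M} \mapsto \Vert x \Vert_\mathcal{M} $ factors through the trace map $ \tau_{n} $, whose range lies in the finite set $ \mathcal{P}(\mathcal{P}(\{ 1, \dots, n \})) $. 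A finite-valued function is Borel as soon as each of its level sets is Borel, so it suffices to show that $ \tau_{n} $ has Borel fibres.

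To see this I would argue that each elementary condition $ S \in \tau_{n}(\mathcal{M}) $, for $ S \subseteq \{ 1, \dots, n \} $, is clopen in $ \mathcal{M} $. The condition $ S \in \tau_{n}(\mathcal{M}) $ says precisely that $ \mathcal{M} $ meets the set $ U_{S} = \{ A \in 2^{\mathbb{N}} : A \cap \{ 1, \dots, n \} = S \} $, which is clopen in $ 2^{\mathbb{N}} $ since it is determined by the first $ n $ coordinates. Because $ U_{S} $ is clopen, both $ \{ \mathcal{M} : \mathcal{M} \cap U_{S} \neq \emptyset \} $ and its complement $ \{ \mathcal{M} : \mathcal{M} \subseteq 2^{\mathbb{N}} \setminus U_{S} \} $ are subbasic open sets of the Vietoris topology, so $ \{ \mathcal{M} : S \in \tau_{n}(\mathcal{M}) \} $ is clopen. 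Each fibre of $ \tau_{n} $ is a finite Boolean combination of such sets, hence clopen, so $ \tau_{n} $ is Borel (indeed continuous into the discrete finite space) and the norm function is Borel. Applying Lemma~\ref{lemmselect} then yields the desired Borel map $ \sel $.

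The only genuinely substantive point is the reduction via Fact~\ref{factTsscontin}: the norm $ \Vert \cdot \Vert_\mathcal{M} $ is given implicitly, as the Minkowski gauge of a set built by an infinitary closure operation depending on $ \mathcal{M} $, so its Borel dependence on $ \mathcal{M} $ is not transparent from the definition. Once one recognizes that, on each fixed finite-dimensional subspace, the norm is controlled by the finitely many traces of $ \mathcal{M} $, the remaining verification that the trace map is Borel is routine and follows immediately from the definition of the Vietoris topology. I do not anticipate further obstacles.
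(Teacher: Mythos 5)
Your proposal is correct and follows essentially the same route as the paper: reduce to Lemma~\ref{lemmselect} with the canonical basis $e_{1}, e_{2}, \dots$ as the dense sequence, then use Fact~\ref{factTsscontin} to see that $\mathcal{M} \mapsto \Vert x \Vert_{\mathcal{M}}$ is constant on each member of a finite clopen partition of $\mathcal{K}(2^{\mathbb{N}})$ determined by the trace on $\{1, \dots, n\}$, hence continuous. Your explicit verification that the trace fibres are clopen in the Vietoris topology is just a spelled-out version of the paper's one-line observation, so there is nothing to add.
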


\begin{proof}
Due to Lemma~\ref{lemmselect}, it is sufficient to realize that the function $ \mathcal{M} \mapsto \Vert x \Vert_\mathcal{M} $ is Borel for every $ x \in c_{00} $. We show that this function is continuous. By Fact~\ref{factTsscontin}, if $ \ell $ is such that $ x \in \mathrm{span} \{ e_{1}, \dots, e_{\ell} \} $, then the norm $ \Vert x \Vert_\mathcal{M} $ depends only on $ \{ A \cap \{ 1, \dots, \ell \} : A \in \mathcal{M} \} $. For this reason, $ \mathcal{K}(2^{\mathbb{N}}) $ can be decomposed into finitely many clopen sets on which $ \Vert x \Vert_\mathcal{M} $ is constant.
\end{proof}

\begin{proof}[Proof of Theorem \ref{thmc0subsp}]
It is easy to show that the class of all Banach spaces $ X $ which can be embedded isomorphically into $ c_{0} $ (shortly $ X \hookrightarrow c_{0} $) is analytic (see \cite[Theorem~2.3]{bossard}). Let us show that it is hard analytic. The set of all infinite subsets of $ \mathbb{N} $ is a $ G_{\delta} $ but not $ F_{\sigma} $ subset of $ 2^{\mathbb{N}} $. By Theorem~\ref{thmhur}, the set
$$ C = \big\{ \mathcal{M} \in \mathcal{K}(2^{\mathbb{N}}) : \mathcal{M} \textrm{ contains an infinite set} \big\} $$
is complete analytic. Let $ \sel $ be a mapping provided by Lemma~\ref{lemmselectTss}. Using Lemma~\ref{lemmaargdel} and Lemma~\ref{lemmaembc0}, we obtain
$$ \sel(\mathcal{M}) \hookrightarrow c_{0} \quad \Leftrightarrow \quad \mathcal{M} \in C. $$
It remains to apply Lemma~\ref{lemmhardset}.
\end{proof}

\begin{remark} \label{remTss}
(i) The space $ c_{0} $ is not the only example for which the argument works. If a separable Banach space $ Z $ contains an isomorphic copy of $ c_{0} $ but does not contain an infinite-dimensional reflexive subspace, then the class of all Banach spaces which can be embedded isomorphically into $ Z $ is complete analytic.

(ii) Let $ G_{1}, G_{2}, \dots $ be a dense sequence of finite-dimensional spaces (i.e., for every finite-dimensional Banach space $ G $ and every $ \varepsilon > 0 $, there is a bijective $ \varepsilon $-isometry between $ G $ and some $ G_{n} $). Then the class of all spaces isomorphic to $ (\bigoplus G_{n})_{c_{0}} $ is complete analytic. Indeed, the space $ \Tss[\mathcal{M}, \frac{1}{2}] \oplus (\bigoplus G_{n})_{c_{0}} $ is isomorphic to $ (\bigoplus G_{n})_{c_{0}} $ if and only if $ \mathcal{M} $ contains an infinite set.
\end{remark}

\begin{remark} \label{remTs}
Argyros and Deliyanni \cite{argdel} defined a space $ \Ts[\mathcal{M}, \frac{1}{2}] $ as the Banach space of sequences $ x = \{ x(i) \}_{i=1}^{\infty} $ with the basis $ e_{i} = \mathbf{1}_{\{ i \}} $ and with the implicitly defined norm
$$ \Vert x \Vert_\mathcal{M} = \max \Bigg\{ \Vert x \Vert_{\infty}, \frac{1}{2} \sup \bigg\{ \sum_{k=1}^{n} \Vert E_{k}x \Vert_\mathcal{M} : \{ E_{1}, \dots, E_{n} \} \in \mathrm{adm}(\mathcal{M}) \bigg\} \Bigg\}. $$
It can be shown that the sequence $ e_{i} $ considered as a basis of $ \Tss[\mathcal{M}, \frac{1}{2}] $ is dual to the same sequence $ e_{i} $ considered as a basis of $ \Ts[\mathcal{M}, \frac{1}{2}] $. However, the duality between these two spaces is not warranted in our general setting.

Let us mention some notes and consequences of results from this section.

(i) If $ \mathcal{M} $ consists of finite sets only, then $ e_{1}, e_{2}, \dots $ is a shrinking basis of $ \Ts[\mathcal{M}, \frac{1}{2}] $. In this case, $ \Tss[\mathcal{M}, \frac{1}{2}] $ is the dual of $ \Ts[\mathcal{M}, \frac{1}{2}] $.

(ii) If $ \mathcal{M} $ contains an infinite set, then $ \Ts[\mathcal{M}, \frac{1}{2}] $ is isomorphic to the $ \ell_{1} $-sum of a sequence of finite-dimensional spaces. In this case, the dual of $ \Ts[\mathcal{M}, \frac{1}{2}] $ is not separable and contains $ \Tss[\mathcal{M}, \frac{1}{2}] $ as a proper subspace. Notice also that $ \Ts[\mathcal{M}, \frac{1}{2}] $ has the Schur property.

(iii) If $ \mathcal{M} $ contains all three-element sets, then $ e_{1}, e_{2}, \dots $ is a boundedly complete basis of $ \Ts[\mathcal{M}, \frac{1}{2}] $, and $ \Ts[\mathcal{M}, \frac{1}{2}] $ is the dual of $ \Tss[\mathcal{M}, \frac{1}{2}] $.

(iv) If $ \mathcal{M} $ contains all three-element sets but it consists of finite sets only, then $ \Ts[\mathcal{M}, \frac{1}{2}] $ is reflexive, as well as $ \Tss[\mathcal{M}, \frac{1}{2}] $.

(v) There exists a Borel mapping $ \sel : \mathcal{K}(2^{\mathbb{N}}) \to \mathcal{SE}(C([0, 1])) $ such that $ \sel(\mathcal{M}) $ is isometric to $ \Ts[\mathcal{M}, \frac{1}{2}] $ for every $ \mathcal{M} \in \mathcal{K}(2^{\mathbb{N}}) $.

(vi) Let us denote by $ \mathbb{N}^{[\leq 3]} $ the system of all subsets of $ \mathbb{N} $ with at most three elements. Then $ \mathcal{M} \mapsto \sel(\mathcal{M} \cup \mathbb{N}^{[\leq 3]}) $ is a Borel mapping which maps a complete analytic set into spaces with the Schur property and its complement into reflexive spaces. Therefore, it follows from the Tsirelson space method that the class of all separable Banach spaces with the Schur property is not coanalytic. Using a tree space method developed in \cite{bossard}, it can be shown that this class is not analytic (see \cite[Theorem~27]{braga}). Our proof of the proper complexity result (Theorem~\ref{thmschur}) can be considered as a combination of these two methods.

(vii) The class of all spaces isomorphic to $ (\bigoplus G_{n})_{\ell_{1}} $ is complete analytic. Indeed, the space $ \Ts[\mathcal{M}, \frac{1}{2}] \oplus (\bigoplus G_{n})_{\ell_{1}} $ is isomorphic to $ (\bigoplus G_{n})_{\ell_{1}} $ if and only if $ \mathcal{M} $ contains an infinite set.
\end{remark}

\section{Preliminaries II} \label{sec:prelimII}

By $ \Lambda^{< \mathbb{N}} $ we denote the system of all finite sequences of elements of a set $ \Lambda $, including the empty sequence $ \emptyset $. That is,
$$ \Lambda^{< \mathbb{N}} = \bigcup _{\ell=0}^{\infty } \Lambda^{\ell} $$
where $ \Lambda^{0} = \{ \emptyset \} $. By $ |\eta| $ we mean the length of $ \eta \in \Lambda^{< \mathbb{N}} $. For $ \sigma \in \Lambda^{\mathbb{N}} $, we denote by $ \sigma|_{\ell} $ its initial segment $ (\sigma(1), \dots, \sigma(\ell)) $ of length $ \ell \in \mathbb{N} $.

A subset $ T $ of $ \Lambda^{< \mathbb{N}} $ is called a \emph{tree on $ \Lambda $} if it is downward closed, i.e.,
$$ (\lambda_{1}, \lambda_{2}, \dots, \lambda_{k}) \in T \; \& \; j \leq k \quad \Rightarrow \quad (\lambda_{1}, \lambda_{2}, \dots, \lambda_{j}) \in T. $$
The set of all trees on $ \Lambda $ is denoted by $ \mathrm{Tr}(\Lambda) $ and endowed with the topology induced by the topology of $ 2^{\Lambda^{< \mathbb{N}}} $. The set of all infinite branches of $ T \in \mathrm{Tr}(\Lambda) $, i.e., sequences $ \nu \in \Lambda^{\mathbb{N}} $ such that $ T $ contains all initial segments of $ \nu $, is denoted by $ [T] $.

In what follows, we identify $ (\Theta \times \Lambda)^{\ell} $ with $ \Theta^{\ell} \times \Lambda^{\ell} $ and $ (\Theta \times \Lambda)^{\mathbb{N}} $ with $ \Theta^{\mathbb{N}} \times \Lambda^{\mathbb{N}} $. In this way, elements of a tree on $ \Theta \times \Lambda $ are pairs of sequences of the same length and its infinite branches are elements of $ \Theta^{\mathbb{N}} \times \Lambda^{\mathbb{N}} $.

If $ \T $ is a tree on $ \Theta \times \Lambda $ and $ \sigma \in \Theta^{\mathbb{N}} $, we define
$$ \T(\sigma) \in \mathrm{Tr}(\Lambda), \quad \T(\sigma) = \{ \nu \in \Lambda^{< \mathbb{N}} : (\sigma|_{|\nu|} , \nu) \in \T \}. $$

We say that a tree $ T $ on $ \mathbb{N} $ is \emph{ill-founded} ($ T \in \mathrm{IF} $) if it has an infinite branch (i.e., $ [T] \neq \emptyset $). In the opposite case, we say that $ T $ is \emph{well-founded} ($ T \in \mathrm{WF} $).

\begin{lemma} \label{lemmCPCA}
The set
$$ C = \big\{ \T \in \mathrm{Tr}(2 \times \mathbb{N}) : (\forall \sigma \in 2^{\mathbb{N}})(\T(\sigma) \in \mathrm{IF}) \big\} $$
is a $ \mathbf{\Pi}^{1}_{2} $-complete subset of {} $ \mathrm{Tr}(2 \times \mathbb{N}) $.
\end{lemma}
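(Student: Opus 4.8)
The plan is to verify the two halves of completeness separately: that $C$ belongs to $\mathbf{\Pi}^{1}_{2}$, and that $C$ is $\mathbf{\Pi}^{1}_{2}$-hard.

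For membership I would simply unfold the definition. For $\T \in \mathrm{Tr}(2 \times \mathbb{N})$ and $\sigma \in 2^{\mathbb{N}}$ one has $\T(\sigma) \in \mathrm{IF}$ exactly when there is $\tau \in \mathbb{N}^{\mathbb{N}}$ with $(\sigma|_{\ell}, \tau|_{\ell}) \in \T$ for every $\ell$. The matrix ``$\forall \ell \, (\sigma|_{\ell}, \tau|_{\ell}) \in \T$'' is closed in $(\T, \sigma, \tau)$, since each conjunct is a clopen condition depending on finitely many coordinates; projecting it along $\tau \in \mathbb{N}^{\mathbb{N}}$ shows that the relation $\{ (\T, \sigma) : \T(\sigma) \in \mathrm{IF} \}$ is $\mathbf{\Sigma}^{1}_{1}$. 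Hence $\{ (\T, \sigma) : \T(\sigma) \in \mathrm{WF} \}$ is $\mathbf{\Pi}^{1}_{1}$, its projection along $\sigma \in 2^{\mathbb{N}}$ is $\mathbf{\Sigma}^{1}_{2}$, and $C$, being the complement of that projection, is $\mathbf{\Pi}^{1}_{2}$.

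For hardness it suffices, by Lemma~\ref{lemmhardset}, to produce for an arbitrary $\mathbf{\Pi}^{1}_{2}$ set $P \subseteq Y$ a Borel map $g : Y \to \mathrm{Tr}(2 \times \mathbb{N})$ with $g^{-1}(C) = P$. I would begin from the standard normal form: there is a Borel assignment $y \mapsto R_{y} \in \mathrm{Tr}(\mathbb{N} \times \mathbb{N})$ with
\[ y \in P \quad \Leftrightarrow \quad \forall \rho \in \mathbb{N}^{\mathbb{N}} \; R_{y}(\rho) \in \mathrm{IF}. \]
The sole discrepancy with the form defining $C$ is that in $C$ the outer quantifier ranges over $2^{\mathbb{N}}$ rather than $\mathbb{N}^{\mathbb{N}}$. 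I would bridge this with the unary coding $c : \mathbb{N}^{\mathbb{N}} \to 2^{\mathbb{N}}$, $c(\rho) = 0^{\rho(1)} 1 0^{\rho(2)} 1 \cdots$, whose range $H$ (the sequences with infinitely many $1$'s) is $G_{\delta}$ and on which the decoding $d : H \to \mathbb{N}^{\mathbb{N}}$ is continuous. The goal is then to build $\T_{y} \in \mathrm{Tr}(2 \times \mathbb{N})$, Borel in $y$, so that for every $s \in 2^{\mathbb{N}}$
\[ \T_{y}(s) \in \mathrm{IF} \quad \Leftrightarrow \quad s \notin H \;\text{ or }\; R_{y}(d(s)) \in \mathrm{IF}. \]
Granting this, $\forall s \in 2^{\mathbb{N}} \, (\T_{y}(s) \in \mathrm{IF})$ is equivalent to $\forall \rho \, (R_{y}(\rho) \in \mathrm{IF})$, that is to $y \in P$, so $g(y) = \T_{y}$ is the desired reduction.

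The construction of $\T_{y}$ is the crux and the main obstacle. A branch of $\T_{y}(s)$ must witness one of two alternatives: that $s$ has only finitely many $1$'s (the disjunct $s \notin H$), or, when $s \in H$, that $R_{y}(d(s))$ is ill-founded. The second alternative is handled by letting the $\mathbb{N}$-labels carry a branch of $R_{y}(d(s))$; at level $\ell$ only $s|_{\ell}$ is available, but $s|_{\ell}$ already determines the relevant initial segment of $d(s)$, so membership at level $\ell$ is decided from $s|_{\ell}$, the label, and $R_{y}$ read up to level $\ell$, which yields continuity in $s$ and Borel dependence on $y$. The delicate part is the first alternative: ``finitely many $1$'s'' is not a closed condition and so cannot be imposed directly; instead the branch must guess, through an initial label, a bound $N$ past which $s$ is required to vanish, remaining freely extendable as long as $s$ stays $0$ beyond $N$. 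Arranging a single downward-closed tree in which these two modes coexist while keeping everything continuous in $s$ and Borel in $y$ is the only genuine work; once the encoding is fixed, the displayed equivalence and the identity $\T_{y} \in C \Leftrightarrow y \in P$ are routine.
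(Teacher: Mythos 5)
Your membership argument is correct (and matches the paper's remark that this half is easy and in fact not even needed for the application), but your hardness argument takes a genuinely different, and more laborious, route than the paper's. The paper exploits the observation that, since all uncountable standard Borel spaces are Borel isomorphic, the auxiliary space in the definition of a $\mathbf{\Sigma}^{1}_{2}$ set may be taken to be $2^{\mathbb{N}}$ from the outset. This yields the normal form $\nu \in A \Leftrightarrow \forall \sigma \in 2^{\mathbb{N}}\, \exists \omega \in \mathbb{N}^{\mathbb{N}} : (\nu,\sigma,\omega) \in [\mathfrak{T}]$ with $\mathfrak{T} \in \mathrm{Tr}(\mathbb{N} \times 2 \times \mathbb{N})$, so that the outer quantifier already ranges over Cantor space and the reduction is simply the continuous map $\nu \mapsto \mathfrak{T}(\nu)$ --- no coding, no auxiliary tree surgery. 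You instead normalize with the outer quantifier over $\mathbb{N}^{\mathbb{N}}$ and then pay for this choice: the unary coding $c$, the $G_{\delta}$ set $H$, and the two-mode tree $\T_{y}$ are exactly the machinery compensating for not having invoked the Borel-isomorphism normalization one step earlier. Your construction does work: the guess-the-bound mode correctly handles $s \notin H$ (the condition ``$s(i)=0$ for $N < i \leq \ell$'' is decided level by level, so the mode-$2$ part is a legitimate tree, ill-founded at $s$ exactly when $s$ has finitely many $1$'s or dies at the first $1$ past $N$ otherwise), and the label-carried-branch mode gives, for $s \in H$, that $\T_{y}(s) \in \mathrm{IF}$ iff $R_{y}(d(s)) \in \mathrm{IF}$; together with surjectivity of $d$ (so that quantifying over $H$ is equivalent to quantifying over $\mathbb{N}^{\mathbb{N}}$, via $d(c(\rho)) = \rho$) this gives $g^{-1}(C) = P$. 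What remains deferred in your write-up --- the mode flag in the first label, padding labels while the decoded prefix of $d(s)$ has not yet grown, downward closure of the amalgamated tree, and fixing the block convention so that gap lengths land in $\mathbb{N}$ --- is routine bookkeeping, but it should be written out for a complete proof, whereas the paper's version (cf. Kechris (37.11)) needs none of it. In short: your route buys independence from the normalization trick at the cost of a page of construction; the paper's buys a three-line reduction at the cost of one standard fact about standard Borel spaces.
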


It is easy (and not necessary for our purposes actually) to check that $ C $ is a $ \mathbf{\Pi}^{1}_{2} $ set. To prove that it is $ \mathbf{\Pi}^{1}_{2} $-hard, we will use the following well-known results:
\begin{itemize}
\item all uncountable standard Borel spaces are Borel isomorphic (see e.g. \cite[(15.6)]{kechris}) and, for this reason, it is possible to consider only $ Y = \mathbb{N}^{\mathbb{N}} $ in the definition of a $ \Gamma $-hard set (where $ \Gamma = \mathbf{\Pi}^{1}_{2} $) and only $ Y = 2^{\mathbb{N}} $ in the definition of a $ \mathbf{\Sigma}^{1}_{2} $ set,
\item a subset $ A $ of a Polish space $ X $ is analytic if and only if it is the projection of some closed $ F \subset X \times \mathbb{N}^{\mathbb{N}} $ (see e.g. \cite[(14.3)]{kechris}),
\item for a closed $ F \subset \Lambda^{\mathbb{N}} $, we have $ F = [T] $ for some $ T \in \mathrm{Tr}(\Lambda) $ (it is possible to collect all initial segments of elements of $ F $, cf. \cite[(2.4)]{kechris}).
\end{itemize}

\begin{proof}(cf. with \cite[(37.11)]{kechris}).
Let $ A $ be a $ \mathbf{\Pi}^{1}_{2} $ subset of $ \mathbb{N}^{\mathbb{N}} $. We need to find a Borel mapping $ f : \mathbb{N}^{\mathbb{N}} \to \mathrm{Tr}(2 \times \mathbb{N}) $ such that $ f^{-1}(C) = A $. There exists an analytic subset $ B $ of $ \mathbb{N}^{\mathbb{N}} \times 2^{\mathbb{N}} $ such that $ \mathbb{N}^{\mathbb{N}} \setminus A = \mathrm{proj}_{\mathbb{N}^{\mathbb{N}}} ((\mathbb{N}^{\mathbb{N}} \times 2^{\mathbb{N}}) \setminus B) $, i.e.,
$$ \nu \in A \quad \Leftrightarrow \quad \forall \sigma \in 2^{\mathbb{N}} : (\nu, \sigma) \in B $$
for all $ \nu \in \mathbb{N}^{\mathbb{N}} $. There exists a closed subset $ F $ of $ \mathbb{N}^{\mathbb{N}} \times 2^{\mathbb{N}} \times \mathbb{N}^{\mathbb{N}} $ such that $ B = \mathrm{proj}_{\mathbb{N}^{\mathbb{N}} \times 2^{\mathbb{N}}} F $, i.e.,
$$ (\nu, \sigma) \in B \quad \Leftrightarrow \quad \exists \omega \in \mathbb{N}^{\mathbb{N}} : (\nu, \sigma, \omega) \in F. $$
There is a tree $ \mathfrak{T} \in \mathrm{Tr}(\mathbb{N} \times 2 \times \mathbb{N}) $ such that $ [\mathfrak{T}] = F $. We have
\begin{align*}
\nu \in A \; & \Leftrightarrow \; \forall \sigma \in 2^{\mathbb{N}} \; \exists \omega \in \mathbb{N}^{\mathbb{N}} : (\nu, \sigma, \omega) \in [\mathfrak{T}] \\
& \Leftrightarrow \; \forall \sigma \in 2^{\mathbb{N}} \; \exists \omega \in \mathbb{N}^{\mathbb{N}} : \omega \in [\mathfrak{T}(\nu, \sigma)] \\
& \Leftrightarrow \; \forall \sigma \in 2^{\mathbb{N}} : \mathfrak{T}(\nu, \sigma) \in \mathrm{IF} \\
& \Leftrightarrow \; \mathfrak{T}(\nu) \in C
\end{align*}
for all $ \nu \in \mathbb{N}^{\mathbb{N}} $, and it remains to note that the mapping $ \nu \mapsto \mathfrak{T}(\nu) $ is continuous.
\end{proof}

A bounded linear operator $ T : X \to Y $ is called \emph{weakly compact} if the image of the unit ball of $ X $ is relatively weakly compact in $ Y $. The operator $ T $ is called \emph{completely continuous} if it maps weakly convergent sequences to norm convergent ones.

We say that a Banach space $ X $ has the \emph{Dunford-Pettis property} if every weakly compact operator $ T : X \to Y $ from $ X $ into another Banach space $ Y $ is completely continuous.

In the remainder of this section, we prove the easy part of Theorem~\ref{thmschur}.

\begin{lemma} \label{lemmDP}
The class of all separable Banach spaces with the Dunford-Pettis property is $ \mathbf{\Pi}^{1}_{2} $.
\end{lemma}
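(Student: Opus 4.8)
The plan is to read membership in the class straight off the sequential characterization of the Dunford--Pettis property recalled in the introduction, and then to count quantifiers. Translating that characterization by a shift, a separable space $ X $ has the Dunford--Pettis property if and only if $ \lim_{n} x^{*}_{n}(x_{n}) = 0 $ whenever $ (x_{n}) $ is a weakly null sequence in $ X $ and $ (x^{*}_{n}) $ is a weakly null sequence in $ X^{*} $. Thus I would write the condition for $ X \in \mathcal{SE}(C([0, 1])) $ as
$$ \forall (x_{n}) \; \forall (x^{*}_{n}) \; \big[ \mathrm{wn}_{X}((x_{n})) \wedge \mathrm{wn}_{X^{*}}((x^{*}_{n})) \; \Rightarrow \; x^{*}_{n}(x_{n}) \to 0 \big], $$
where $ \mathrm{wn} $ abbreviates ``weakly null'' and the two outer quantifiers range over bounded sequences in $ X $ and in $ X^{*} $. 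The first task is to set these quantifications up in a standard Borel way: using Kuratowski--Ryll-Nardzewski selectors I fix Borel maps producing a dense sequence $ d_{k}(X) $ in each $ X $, and I code a functional $ x^{*} \in B_{X^{*}} $ by the sequence of its values $ (x^{*}(d_{k}(X)))_{k} $, which identifies $ (B_{X^{*}}, w^{*}) $ with a Borel-parametrised compact subset of a product space. With this coding, quantifying over bounded sequences in $ X $ and in $ X^{*} $ becomes quantifying over points of a Polish space restricted to a Borel condition, while the pairing $ x^{*}_{n}(x_{n}) $ and the terminal condition $ x^{*}_{n}(x_{n}) \to 0 $ are Borel.

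Granting this, the whole formula is $ \mathbf{\Pi}^{1}_{2} $ as soon as both weak-nullity conditions are coanalytic: the bracketed matrix is then $ \neg\mathrm{wn}_{X} \vee \neg\mathrm{wn}_{X^{*}} \vee (x^{*}_{n}(x_{n}) \to 0) $, a union of two analytic sets and a Borel set, hence analytic, and a universal real quantifier in front of an analytic matrix yields exactly a $ \mathbf{\Pi}^{1}_{2} $ set (since $ \mathbf{\Pi}^{1}_{2} = \forall^{\mathbb{R}} \mathbf{\Sigma}^{1}_{1} $). The condition $ \mathrm{wn}_{X}((x_{n})) $ is routinely coanalytic, as it reads $ \forall x^{*} \in B_{X^{*}} \; \forall \varepsilon \; \exists N \; \forall n \geq N : |x^{*}(x_{n})| < \varepsilon $, a universal real quantifier over the Polish space $ B_{X^{*}} $ in front of a Borel kernel, the use of separability being exactly that $ (B_{X^{*}}, w^{*}) $ is compact metrizable.

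The one genuinely delicate point, and the main obstacle, is to bound the complexity of $ \mathrm{wn}_{X^{*}}((x^{*}_{n})) $, i.e.\ weak nullity in the possibly \emph{non-separable} dual $ X^{*} $, whose ball carries no Polish weak topology and over whose bidual $ X^{**} $ one cannot quantify directly. I would resolve this by passing to the separable subspace $ Z = \overline{\mathrm{span}} \{ x^{*}_{1}, x^{*}_{2}, \dots \} $ of $ X^{*} $. Since the weak topology of a subspace is the restriction of the ambient weak topology --- every $ \psi \in Z^{*} $ extends by Hahn--Banach to a functional on $ X^{*} $, that is, to an element of $ X^{**} $ --- the sequence $ (x^{*}_{n}) $ is weakly null in $ X^{*} $ if and only if it is weakly null in $ Z $, i.e.\ if and only if $ \psi(x^{*}_{n}) \to 0 $ for every $ \psi \in B_{Z^{*}} $. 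As $ Z $ is separable, $ (B_{Z^{*}}, w^{*}) $ is again compact metrizable, so this is once more a universal real quantifier over a Polish space in front of a Borel kernel, hence coanalytic, provided the passage $ (X, (x^{*}_{n})) \mapsto Z $ can be made Borel.

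For that last point I would invoke Lemma~\ref{lemmselect}. The norm $ \Vert \sum_{k} \lambda_{k} x^{*}_{k} \Vert_{X^{*}} = \sup_{j} | \sum_{k} \lambda_{k} x^{*}_{k}(d_{j}(X)) | $ is Borel in the coded data, so the lemma supplies a Borel map sending $ (X, (x^{*}_{n})) $ to an isometric copy of $ Z $ in $ \mathcal{SE}(C([0, 1])) $, together with Borel-tracked images $ \chi_{k} $ of the generators $ x^{*}_{k} $; relative to this copy the evaluations $ \psi(x^{*}_{k}) = \psi(\chi_{k}) $ are Borel in $ (\psi, X, (x^{*}_{n})) $, so $ \mathrm{wn}_{X^{*}} $ is indeed coanalytic. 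Combining the three ingredients in the quantifier count of the second paragraph then gives a $ \mathbf{\Pi}^{1}_{2} $ description of the class, which is what the lemma asserts. I expect every step except the separable-subspace reduction to be bookkeeping; that reduction, used to replace quantification over $ X^{**} $ by quantification over the Polish ball $ B_{Z^{*}} $, is the crux.
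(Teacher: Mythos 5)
Your proof is correct, but it takes a genuinely different route from the paper's. You start from the bilinear sequential characterization ($x^{*}_{n}(x_{n}) \to 0 $ whenever $ (x_{n}) $ is weakly null in $ X $ and $ (x^{*}_{n}) $ is weakly null in $ X^{*} $) and then confront head-on the quantifier over the weak topology of the non-separable dual, resolving it by the Hahn--Banach reduction to $ Z = \overline{\mathrm{span}} \{ x^{*}_{1}, x^{*}_{2}, \dots \} $ together with Lemma~\ref{lemmselect} (and its Borel-tracked generators $ \chi_{k} $) to realize $ Z $ Borel-measurably inside $ \mathcal{SE}(C([0, 1])) $, after which $ B_{Z^{*}} $ --- equivalently $ B_{C([0,1])^{*}} $ with the $ w^{*} $-topology --- is a fixed Polish parameter space. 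The paper instead sidesteps the dual entirely: it invokes Diestel's theorem that the Dunford--Pettis property need only be tested against weakly compact operators $ T : X \to c_{0} $, codes such an operator by a dense sequence $ (x_{n}) $ in $ B_{X} $ together with its images $ (y_{n}) \in B_{c_{0}}^{\mathbb{N}} $, and observes that weak compactness of $ T $ becomes relative weak compactness of $ \{ y_{n} \} $ in the \emph{fixed} space $ c_{0} $, which is analytic because it can be tested via closures in $ [-1, 1]^{\mathbb{N}} $; the non-weakly-null condition is analytic by a result of Braga, so the whole matrix is $ \mathbf{\Sigma}^{1}_{1} $ under one universal real quantifier, exactly as in your count. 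What the paper's $ c_{0} $-reduction buys is precisely the avoidance of your self-identified crux: no quantification over $ X^{**} $ ever arises and no selection machinery is needed inside the complexity computation. What your argument buys is independence from the operator-theoretic characterization --- you use only the sequential reformulation quoted in the introduction --- at the price of redeploying Lemma~\ref{lemmselect} mid-proof. Two bookkeeping points you wave at should be recorded: the pairing $ x^{*}_{n}(x_{n}) $ is Borel in the codes only after fixing an explicit approximation scheme through the dense selectors $ d_{k}(X) $, and quantifying over unit-ball sequences suffices because weakly null sequences are bounded and the conclusion is bihomogeneous under scaling. With those noted, your $ \forall^{\mathbb{R}} \mathbf{\Sigma}^{1}_{1} $ count is sound and yields the same $ \mathbf{\Pi}^{1}_{2} $ bound.
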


During the proof, we will use the following known facts:
\begin{itemize}
\item an operator $ T : X \to Y $ is completely continuous if and only if it maps weakly null sequences to null sequences,
\item $ X $ has the Dunford-Pettis property if and only if every weakly compact operator $ T : X \to c_{0} $ is completely continuous (see e.g. \cite[Theorem~1]{diestel}).
\end{itemize}

\begin{proof}
We prove first that $ X \in \mathcal{SE}(C([0, 1])) $ has the Dunford-Pettis property if and only if
$$ \forall (x_{1}, x_{2}, \dots) \in B_{C[0, 1]}^{\mathbb{N}} \; \forall (y_{1}, y_{2}, \dots) \in B_{c_{0}}^{\mathbb{N}} : \textrm{(a) or (b) or (c) or (d) or (e),} $$
where we consider properties
\begin{itemize}
\item[{(a)}] $ B_{X} \neq \overline{\{ x_{1}, x_{2}, \dots \}} $,
\item[{(b)}] $ x_{n} \mapsto y_{n} $ does not define a bounded linear operator from $ \overline{\mathrm{span}} \{ x_{1}, x_{2}, \dots \} $ into $ c_{0} $,
\item[{(c)}] $ \{ y_{1}, y_{2}, \dots \} $ is not relatively weakly compact,
\item[{(d)}] $ x_{2}, x_{4}, x_{6}, \dots $ is not weakly null,
\item[{(e)}] $ y_{2}, y_{4}, y_{6}, \dots $ is null.
\end{itemize}

Let us assume that $ X $ has the Dunford-Pettis property. For sequences $ x_{1}, x_{2}, \dots $ in $ B_{C[0, 1]} $ and $ y_{1}, y_{2}, \dots $ in $ B_{c_{0}} $, we need to show that some of the five properties is satisfied. Let us suppose that none of properties (a), (b), (c), (d) is satisfied. So, $ x_{n} \mapsto y_{n} $ defines a bounded linear operator $ T : X \to c_{0} $ that is weakly compact. Moreover, the sequence $ x_{2}, x_{4}, x_{6}, \dots $ is weakly null. As $ X $ is assumed to have the Dunford-Pettis property, $ T $ is completely continuous, and thus it maps the weakly null sequence $ x_{2k} $ to a null sequence $ y_{2k} $. It means that (e) is valid.

Let us assume that the formula is fulfilled for some $ X \in \mathcal{SE}(C([0, 1])) $. Let $ T : X \to c_{0} $ be a weakly compact operator. We need to show that $ T $ maps a weakly null sequence $ a_{1}, a_{2}, \dots $ to a null sequence. We may suppose that $ \Vert T \Vert \leq 1 $ and that $ a_{k} \in B_{X} $. Let us put $ x_{2k} = a_{k} $ and choose a sequence $ x_{1}, x_{3}, \dots $ that is dense in $ B_{X} $. Moreover, let $ y_{n} = Tx_{n} $. As $ T $ is weakly compact, the set $ \{ y_{1}, y_{2}, \dots \} $ is relatively weakly compact. So, none of properties (a), (b), (c), (d) is satisfied. Then (e) has to be valid. That is, the sequence $ Ta_{k} = y_{2k} $ is null.

So, both implications are verified. To prove the lemma, it remains to show that each of the five properties define an analytic subset of $ \mathcal{SE}(C([0, 1])) \times B_{C[0, 1]}^{\mathbb{N}} \times B_{c_{0}}^{\mathbb{N}} $.

(a) The corresponding set is Borel. Indeed, if $ U $ is the open unit ball of $ C[0, 1] $ and $ G_{1}, G_{2}, \dots $ is a basis of the norm topology of $ C[0, 1] $, then $ B_{X} = \overline{\{ x_{1}, x_{2}, \dots \}} $ if and only if
$$ \forall k \in \mathbb{N} : [ X \cap U \cap G_{k} \neq \emptyset \Leftrightarrow \exists n \in \mathbb{N} : x_{n} \in G_{k} ]. $$

(b) The corresponding set is Borel. It is sufficient to realize that $ x_{n} \mapsto y_{n} $ defines a bounded linear operator from $ \overline{\mathrm{span}} \{ x_{1}, x_{2}, \dots \} $ into $ c_{0} $ if and only if
$$ \exists K \in \mathbb{N} \forall m \in \mathbb{N} \forall \alpha_{1}, \alpha_{2}, \dots, \alpha_{m} \in \mathbb{Q} : \Big\Vert \sum_{n=1}^{m} \alpha_{n} y_{n} \Big\Vert \leq K \Big\Vert \sum_{n=1}^{m} \alpha_{n} x_{n} \Big\Vert. $$

(c) Let us notice that $ B_{c_{0}} $ with the weak topology is a subspace of the topological product $ [-1, 1]^{\mathbb{N}} $. A subset of $ B_{c_{0}} $ is relatively weakly compact if and only if its closure in $ [-1, 1]^{\mathbb{N}} $ is still a subset of $ B_{c_{0}} $. For this reason, $ \{ y_{1}, y_{2}, \dots \} $ is not relatively weakly compact if and only if
\begin{align*}
\exists z \in [-1, 1]^{\mathbb{N}} : & \; \big[ \exists l \in \mathbb{N} \forall m \in \mathbb{N} \exists k \geq m : |z(k)| \geq 1/l \big] \\
 & \; \& \big[ \forall l, m \in \mathbb{N} \exists n \in \mathbb{N} \forall k \leq m : |y_{n}(k) - z(k)| < 1/l \big] .
\end{align*}
Hence, our set is a projection of a Borel subset of $ \mathcal{SE}(C([0, 1])) \times B_{C[0, 1]}^{\mathbb{N}} \times B_{c_{0}}^{\mathbb{N}} \times [-1, 1]^{\mathbb{N}} $.

(d) The corresponding set is analytic by \cite[Theorem~20]{braga}.

(e) It is easy to show that the corresponding set is Borel.
\end{proof}

\section{Tree spaces upon Tsirelson spaces}

In this section, we apply the construction of a tree space studied in \cite{kurka} on Tsirelson type spaces presented above. This will enable us to show that some classes of Banach spaces have quite high complexity (Theorem~\ref{thmschur}).

For a finite sequence $ \nu = (n_{1}, n_{2}, \dots, n_{k}) \in \mathbb{N}^{< \mathbb{N}} $, let $ \tilde{\nu} = \{ n_{1} < n_{1} + n_{2} < \dots < \sum_{i=1}^{k} n_{i} \} \subset \mathbb{N} $. Similarly, for an infinite sequence $ \nu = (n_{1}, n_{2}, \dots) \in \mathbb{N}^{\mathbb{N}} $, let $ \tilde{\nu} = \{ n_{1} < n_{1} + n_{2} < \dots \} \subset \mathbb{N} $.

For every $ T \in \mathrm{Tr}(\mathbb{N}) $, we define
$$ \mathcal{M}_{T} = \{ \tilde{\nu} : \nu \in T \cup [T] \textrm{ or } |\nu| \leq 3 \}. $$
Let us note that $ \mathcal{M}_{T} $ belongs to $ \mathcal{K}(2^{\mathbb{N}}) $ and that it contains an infinite set if and only if $ T $ is ill-founded. Thus, we obtain from Lemma~\ref{lemmaembc0} and Lemma~\ref{lemmshrink} a basic discovery about $ \Tss[\mathcal{M}_{T}, \frac{1}{2}] $.

\begin{lemma} \label{lemmTssMT}
{\rm (1)} If $ T \in \mathrm{Tr}(\mathbb{N}) $ is ill-founded, then $ \Tss[\mathcal{M}_{T}, \frac{1}{2}] $ is isomorphic to the $ c_{0} $-sum of a sequence of finite-dimensional spaces.

{\rm (2)} If $ T \in \mathrm{Tr}(\mathbb{N}) $ is well-founded, then $ \Tss[\mathcal{M}_{T}, \frac{1}{2}] $ is reflexive.
\end{lemma}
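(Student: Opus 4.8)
The plan is to reduce everything to the two structural results Lemma~\ref{lemmaembc0} and Lemma~\ref{lemmshrink}, once two combinatorial features of $ \mathcal{M}_{T} $ are in hand: that $ \mathcal{M}_{T} $ contains every three-element subset of $ \mathbb{N} $ (for every $ T $), and the dichotomy recorded just before the statement, namely that $ \mathcal{M}_{T} $ contains an infinite set exactly when $ T $ is ill-founded.

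First I would verify that $ \{ \tilde{\nu} : |\nu| = 3 \} $ is precisely the family of all three-element subsets of $ \mathbb{N} $. For $ \nu = (n_{1}, n_{2}, n_{3}) $ one has $ \tilde{\nu} = \{ n_{1}, n_{1}+n_{2}, n_{1}+n_{2}+n_{3} \} $, and $ (n_{1}, n_{2}, n_{3}) \mapsto \{ n_{1} < n_{1}+n_{2} < n_{1}+n_{2}+n_{3} \} $ is a bijection of $ \mathbb{N}^{3} $ onto the three-element subsets of $ \mathbb{N} $, with inverse $ \{ a < b < c \} \mapsto (a, b-a, c-b) $. Since the definition of $ \mathcal{M}_{T} $ admits all $ \nu $ with $ |\nu| \leq 3 $, this shows $ \mathcal{M}_{T} $ contains all three-element sets, independently of $ T $.

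Part (1) is then immediate: if $ T $ is ill-founded, the observation preceding the lemma tells us $ \mathcal{M}_{T} $ contains an infinite set (take $ \tilde{\nu} $ for some $ \nu \in [T] $), so Lemma~\ref{lemmaembc0} gives that $ \Tss[\mathcal{M}_{T}, \frac{1}{2}] $ is isomorphic to the $ c_{0} $-sum of a sequence of finite-dimensional spaces. For part (2), if $ T $ is well-founded then $ [T] = \emptyset $, so $ \mathcal{M}_{T} = \{ \tilde{\nu} : \nu \in T \textrm{ or } |\nu| \leq 3 \} $ is generated entirely by finite sequences and hence consists of finite sets only; together with the first step, Lemma~\ref{lemmshrink} applies and yields reflexivity of $ \Tss[\mathcal{M}_{T}, \frac{1}{2}] $.

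There is essentially no obstacle here --- the lemma is a direct corollary of the two earlier results. The only point deserving care is the claim, used in part (2), that the clause $ \nu \in T \cup [T] $ contributes only finite sets when $ T $ is well-founded; this is exactly where $ [T] = \emptyset $ enters, and it is the very hypothesis whose failure drives part (1).
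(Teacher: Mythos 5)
Your proposal is correct and matches the paper exactly: the paper derives the lemma directly from Lemma~\ref{lemmaembc0} and Lemma~\ref{lemmshrink} after observing that $ \mathcal{M}_{T} $ contains an infinite set if and only if $ T $ is ill-founded, which is precisely your reduction. Your explicit verification that the clause $ |\nu| \leq 3 $ yields all three-element sets (via the bijection $ \{a<b<c\} \mapsto (a, b-a, c-b) $) is a detail the paper leaves implicit, and it is the right thing to check.
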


The following observation follows from Fact~\ref{factTsscontin}.

\begin{fact} \label{factMTcontin}
If $ T $ and $ S $ are two trees on $ \mathbb{N} $ which have the same sequences of length at most $ \ell $, then $ \Vert x \Vert_{\mathcal{M}_{T}} = \Vert x \Vert_{\mathcal{M}_{S}} $ for every $ x \in \mathrm{span} \{ e_{1}, e_{2}, \dots, e_{\ell} \} $.

In particular, if $ \T \in \mathrm{Tr}(2 \times \mathbb{N}) $ and $ \sigma, \tau \in 2^{\mathbb{N}} $ satisfy $ \sigma|_{\ell} = \tau|_{\ell} $, then $ \Vert x \Vert_{\mathcal{M}_{\T(\sigma)}} = \Vert x \Vert_{\mathcal{M}_{\T(\tau)}} $ for every $ x \in \mathrm{span} \{ e_{1}, e_{2}, \dots, e_{\ell} \} $.
\end{fact}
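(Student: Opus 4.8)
The plan is to deduce the statement directly from Fact~\ref{factTsscontin}. That fact reduces everything to checking that the hypothesis on $ T $ and $ S $ forces
$$ \big\{ A \cap \{ 1, \dots, \ell \} : A \in \mathcal{M}_{T} \big\} = \big\{ A \cap \{ 1, \dots, \ell \} : A \in \mathcal{M}_{S} \big\}. $$
So the whole task is to understand how truncation to $ \{ 1, \dots, \ell \} $ interacts with the map $ \nu \mapsto \tilde{\nu} $. The key observation is that, writing $ \nu = (n_{1}, n_{2}, \dots) $ with each $ n_{i} \geq 1 $, the partial sums $ s_{j} = \sum_{i=1}^{j} n_{i} $ satisfy $ s_{j} \geq j $. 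Consequently, an element of $ \tilde{\nu} $ lying in $ \{ 1, \dots, \ell \} $ must be one of $ s_{1}, \dots, s_{m} $, where $ m $ is the largest index with $ s_{m} \leq \ell $; in particular $ m \leq \ell $. Thus $ \tilde{\nu} \cap \{ 1, \dots, \ell \} = \widetilde{\nu|_{m}} $, the set of partial sums of the length-$ m $ prefix $ \nu|_{m} $, and this prefix has length at most $ \ell $.

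With this in hand I would argue as follows. The families $ \{ \tilde{\nu} : |\nu| \leq 3 \} $ contribute identically to $ \mathcal{M}_{T} $ and $ \mathcal{M}_{S} $ (indeed $ \emptyset = \tilde{\emptyset} $ is always present), so it suffices to match the contributions of $ \{ \tilde{\nu} : \nu \in T \cup [T] \} $ after truncation. Given $ \nu \in T \cup [T] $ and the associated $ m \leq \ell $ above, the prefix $ \nu|_{m} $ belongs to $ T $ (by downward closedness if $ \nu \in T $, and because every finite initial segment of a branch lies in $ T $ if $ \nu \in [T] $). Since $ T $ and $ S $ have the same sequences of length at most $ \ell $ and $ |\nu|_{m}| = m \leq \ell $, we get $ \nu|_{m} \in S $. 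Hence $ \tilde{\nu} \cap \{ 1, \dots, \ell \} = \widetilde{\nu|_{m}} $ is the truncation of a set arising from $ S $, which gives one inclusion between the two truncated systems; the reverse inclusion is symmetric. Fact~\ref{factTsscontin} then yields $ \Vert x \Vert_{\mathcal{M}_{T}} = \Vert x \Vert_{\mathcal{M}_{S}} $ for every $ x \in \mathrm{span} \{ e_{1}, \dots, e_{\ell} \} $.

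For the ``in particular'' clause, note that for a sequence $ \nu $ with $ |\nu| \leq \ell $ the condition $ \nu \in \T(\sigma) $ reads $ (\sigma|_{|\nu|}, \nu) \in \T $, and $ \sigma|_{\ell} = \tau|_{\ell} $ forces $ \sigma|_{|\nu|} = \tau|_{|\nu|} $; hence $ \T(\sigma) $ and $ \T(\tau) $ have identical sequences of length at most $ \ell $, and the first part applies. I expect no serious obstacle here: the only thing to get right is the bookkeeping showing that restricting to $ \{ 1, \dots, \ell \} $ exposes only prefixes of length at most $ \ell $, which is exactly what the growth $ s_{j} \geq j $ of the partial sums guarantees.
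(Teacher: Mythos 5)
Your proof is correct and follows exactly the route the paper intends: the paper states this fact without proof, remarking only that it ``follows from Fact~\ref{factTsscontin}'', and your argument supplies precisely the missing bookkeeping --- that $ s_{j} \geq j $ forces $ \tilde{\nu} \cap \{ 1, \dots, \ell \} = \widetilde{\nu|_{m}} $ with $ m \leq \ell $, so that the truncated systems $ \{ A \cap \{ 1, \dots, \ell \} : A \in \mathcal{M}_{T} \} $ and $ \{ A \cap \{ 1, \dots, \ell \} : A \in \mathcal{M}_{S} \} $ coincide. The ``in particular'' clause is also handled correctly via $ \sigma|_{|\nu|} = \tau|_{|\nu|} $ for $ |\nu| \leq \ell $.
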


Now, we are ready to introduce our tree space.

\begin{definition}
For $ \T \in \mathrm{Tr}(2 \times \mathbb{N}) $, let $ E_{\T} $ be defined as a completion of $ c_{00}(2^{< \mathbb{N}} \setminus \{ \emptyset \}) $ with the norm
$$ \Vert x \Vert_{\T} = \sup_{\sigma \in 2^{\mathbb{N}}} \Big\Vert \sum_{\ell = 1}^{\infty} x(\sigma|_{\ell}) e_{\ell} \Big\Vert_{\mathcal{M}_{\T(\sigma)}}. $$ 
\end{definition}

This space is defined according to \cite[Definition 3.1]{kurka}. Indeed, we can take $ T = 2^{< \mathbb{N}} \setminus \{ \emptyset \}, F_{\sigma} = \Tss[\mathcal{M}_{\T(\sigma)}, \frac{1}{2}] $ and $ f^{\sigma}_{\ell} = e_{\ell} $ for $ \sigma \in 2^{\mathbb{N}} $ and $ \ell \in \mathbb{N} $. The requirement from \cite[Definition 3.1]{kurka} is satisfied due to Fact~\ref{factMTcontin}.

Thus, results from \cite[Section~3]{kurka} are available. In particular, if we denote by $ \{ z_{\eta} : \eta \in 2^{< \mathbb{N}} \setminus \{ \emptyset \} \} $ the canonical basis of $ c_{00}(2^{< \mathbb{N}} \setminus \{ \emptyset \}) $, then this system forms a basis of $ E_{\T} $.

The following two statements follow from \cite[Fact~3.2]{kurka}, \cite[Proposition~3.5]{kurka} and Lemma~\ref{lemmshrink}.

\begin{fact} \label{factbranches}
For every $ \sigma \in 2^{\mathbb{N}} $, we have the $ 1 $-equivalence
$$ \Big\Vert \sum_{\ell=1}^{n} \lambda_{\ell} z_{\sigma|_{\ell}} \Big\Vert_{\T} = \Big\Vert \sum_{\ell=1}^{n} \lambda_{\ell} e_{\ell} \Big\Vert_{\mathcal{M}_{\T(\sigma)}} $$
of the sequences $ z_{\sigma|_{1}}, z_{\sigma|_{2}}, \dots $ and $ e_{1}, e_{2}, \dots $ .

Therefore, $ E_{\T} $ contains a $ 1 $-complemented copy of $ \Tss[{\mathcal{M}_{\T(\sigma)}}, \frac{1}{2}] $. Consequently, $ E_{\T}^{*} $ contains a $ 1 $-complemented copy of the dual of $ \Tss[{\mathcal{M}_{\T(\sigma)}}, \frac{1}{2}] $.
\end{fact}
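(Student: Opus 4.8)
The plan is to establish the two displayed norm-equalities separately, then read off the two structural consequences. The first equality is the claim that the map $e_\ell \mapsto z_{\sigma|_\ell}$ is an isometry from $\Tss[\mathcal{M}_{\T(\sigma)},\frac{1}{2}]$ into $E_{\T}$. Given a finitely supported vector $x$ carried by the branch $\sigma$, i.e.\ $x = \sum_{\ell=1}^{n} \lambda_\ell z_{\sigma|_\ell}$, I would unwind the definition $\Vert x \Vert_{\T} = \sup_{\tau \in 2^{\mathbb{N}}} \Vert \sum_{\ell} x(\tau|_\ell) e_\ell \Vert_{\mathcal{M}_{\T(\tau)}}$. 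For $\tau = \sigma$ the inner norm is exactly $\Vert \sum_{\ell} \lambda_\ell e_\ell \Vert_{\mathcal{M}_{\T(\sigma)}}$, so the supremum is at least the right-hand side. For the reverse inequality, the key point is that any other branch $\tau$ only picks up a restriction of the coefficient vector: if $\tau$ agrees with $\sigma$ on the first $j$ coordinates and then diverges, the vector $\sum_\ell x(\tau|_\ell) e_\ell$ is supported on $\{1,\dots,j\}$ and equals $E\bigl(\sum_\ell \lambda_\ell e_\ell\bigr)$ with $E=\{1,\dots,j\}$. By Fact~\ref{fact1uncon} ($1$-unconditionality) this restriction has norm at most that of the full vector in the relevant space; combined with Fact~\ref{factMTcontin} (the norm of a vector supported on $\{1,\dots,j\}$ depends only on the length-$j$ data of the tree, which $\T(\tau)$ and $\T(\sigma)$ share), the inner norm for $\tau$ is dominated by the inner norm for $\sigma$. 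Hence the supremum is attained at $\tau=\sigma$ and the first equality holds.

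From this isometry, the copy of $\Tss[\mathcal{M}_{\T(\sigma)},\frac{1}{2}]$ inside $E_{\T}$ is immediate. To see it is $1$-complemented, I would exhibit the natural norm-one projection $P_\sigma$ that restricts a basis vector to the branch, i.e.\ $P_\sigma z_\eta = z_\eta$ if $\eta = \sigma|_{|\eta|}$ and $P_\sigma z_\eta = 0$ otherwise. That $P_\sigma$ has norm one follows again from the same domination argument: applying $P_\sigma$ to an arbitrary $x \in c_{00}(2^{<\mathbb{N}}\setminus\{\emptyset\})$ only deletes coordinates off the branch, and for each test branch $\tau$ the resulting inner vector is a restriction of the original, so Fact~\ref{fact1uncon} gives $\Vert P_\sigma x\Vert_{\T} \le \Vert x\Vert_{\T}$. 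The complementation of the dual copy in $E_{\T}^{*}$ is then the routine duality fact that the adjoint $P_\sigma^{*}$ is a norm-one projection onto a subspace isometric to $(\operatorname{ran} P_\sigma)^{*}$, which by the first equality is the dual of $\Tss[\mathcal{M}_{\T(\sigma)},\frac{1}{2}]$.

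The main obstacle is the reverse inequality in the first display, specifically verifying that no competing branch $\tau \neq \sigma$ can produce a larger inner norm. This is where the interaction between unconditionality and the length-$\ell$ dependence of the Tsirelson-type norm must be combined carefully: one must argue that the coefficient vector seen along $\tau$ is genuinely a coordinate-restriction of the vector seen along $\sigma$ (not merely a different vector), so that Fact~\ref{fact1uncon} applies, and simultaneously that the ambient space $\Tss[\mathcal{M}_{\T(\tau)},\frac{1}{2}]$ agrees with $\Tss[\mathcal{M}_{\T(\sigma)},\frac{1}{2}]$ on the relevant initial segment via Fact~\ref{factMTcontin}. Everything else is bookkeeping, and indeed the referenced results \cite[Fact~3.2]{kurka} and \cite[Proposition~3.5]{kurka} supply precisely this branch-domination principle for the abstract tree-space construction, with Lemma~\ref{lemmshrink} contributing the shrinking property needed to pass cleanly to the dual.
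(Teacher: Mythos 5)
Your argument is correct, and it is worth noting that the paper does not actually prove Fact~\ref{factbranches} itself: it obtains both this fact and Lemma~\ref{lemmashrink} by citing the abstract tree-space results \cite[Fact~3.2]{kurka} and \cite[Proposition~3.5]{kurka}, whose hypotheses are met thanks to Fact~\ref{factMTcontin}. What you have done is unpack that citation into a self-contained verification, and the mechanism you identify is exactly the right one: since agreement of $\tau$ with $\sigma$ on initial segments is monotone, the coefficient vector read along any competing branch $\tau$ is the restriction of $\sum_{\ell}\lambda_{\ell}e_{\ell}$ to an initial interval $\{1,\dots,j\}$, where Fact~\ref{factMTcontin} lets you compute its norm in $\Vert\cdot\Vert_{\mathcal{M}_{\T(\sigma)}}$ and Fact~\ref{fact1uncon} dominates it by the full vector; the same restriction-plus-unconditionality argument shows your branch projection $P_{\sigma}$ has norm one, since the inner vector of $P_{\sigma}x$ along any $\tau$ is a coordinate restriction of the inner vector of $x$ along that same $\tau$. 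The passage to the dual via $P_{\sigma}^{*}$ is indeed routine, and one small correction: no shrinking property is needed there --- the adjoint of a norm-one projection is a norm-one projection whose range is canonically isometric to $(\operatorname{ran}P_{\sigma})^{*}$ regardless; Lemma~\ref{lemmshrink} enters the paper's bookkeeping only through Lemma~\ref{lemmashrink} (the shrinking basis of $E_{\T}$), not through Fact~\ref{factbranches}. The trade-off is the usual one: the paper's citation buys brevity and places the fact in the general amalgamation framework of \cite{kurka}, while your direct proof makes the specific domination principle visible at the cost of redoing, in this special case, what the framework already provides.
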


\begin{lemma} \label{lemmashrink}
The system $ \{ z_{\eta} : \eta \in 2^{< \mathbb{N}} \setminus \{ \emptyset \} \} $ is a $ 1 $-unconditional shrinking basis of $ E_{\T} $.
\end{lemma}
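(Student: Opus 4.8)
The plan is to establish two properties of the canonical system $\{ z_{\eta} \}$ separately: $1$-unconditionality and shrinkingness. The $1$-unconditionality should follow almost immediately from the definition of the norm $\Vert \cdot \Vert_{\T}$ together with Fact~\ref{fact1uncon}. Indeed, for a finitely supported $x$, the quantity $\Vert x \Vert_{\T}$ is a supremum over $\sigma \in 2^{\mathbb{N}}$ of the norms $\Vert \sum_{\ell} x(\sigma|_{\ell}) e_{\ell} \Vert_{\mathcal{M}_{\T(\sigma)}}$, and each of these inner norms is $1$-unconditional in the coefficients by Fact~\ref{fact1uncon}. Since replacing the coefficients $x(\eta)$ on a subset of coordinates by zero only shrinks each inner sum in the $1$-unconditional sense, the supremum decreases; I would spell this out by fixing finite sets $A \subset B$ of nodes and checking the defining inequality for $1$-unconditionality directly.

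The shrinking part is the substantive step, and I expect it to be the main obstacle. The natural strategy is to mimic the proof of Lemma~\ref{lemmshrink}. Given $x^{*} \in E_{\T}^{*}$, I want to show $x^{*} = \sum_{\eta} x^{*}(z_{\eta}) z^{*}_{\eta}$, where the sum is over the tree $2^{<\mathbb{N}} \setminus \{\emptyset\}$ in some fixed enumeration compatible with length. Suppose not; then there is $\varepsilon > 0$ and a sequence of normalized, successively supported block vectors $w_{1}, w_{2}, \dots$ (supported on higher and higher levels of the tree) on which $x^{*}$ stays bounded below by roughly $\varepsilon$. The key point is that because $\mathcal{M}_{\T(\sigma)}$ always contains all three-element sets (by construction of $\mathcal{M}_{T}$, which adjoins $\tilde{\nu}$ for all $|\nu| \leq 3$), the admissibility machinery of Lemma~\ref{lemmshrink} applies along every branch. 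I would select three such block vectors living on sufficiently high and separated levels so that, when viewed through any branch $\sigma$, their supports project to an $\mathcal{M}_{\T(\sigma)}$-admissible family; Fact~\ref{factsupineq} then forces $\Vert w_{1} + w_{2} + w_{3} \Vert_{\T} \leq 2$, while $x^{*}$ evaluates to more than $\frac{8}{3}\varepsilon$ on this combination, contradicting the norm bound on $x^{*}$ exactly as in the one-branch argument.

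The delicate issue, and where the proof genuinely differs from Lemma~\ref{lemmshrink}, is that $\Vert \cdot \Vert_{\T}$ is a supremum over \emph{all} branches simultaneously, so I must ensure the admissibility of the blocks holds uniformly. Here I would exploit Fact~\ref{factbranches}, which says the norm of a vector supported along a single branch $\sigma|_{1}, \sigma|_{2}, \dots$ coincides with its $\Vert \cdot \Vert_{\mathcal{M}_{\T(\sigma)}}$-norm; this reduces the branch-wise analysis to the single-space estimate. Rather than reprove everything, the cleanest route is likely to invoke the general tree-space machinery: the statement is asserted to follow from \cite[Fact~3.2]{kurka} and \cite[Proposition~3.5]{kurka} combined with Lemma~\ref{lemmshrink}. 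The abstract result from \cite{kurka} presumably says that if each fibre space $F_{\sigma} = \Tss[\mathcal{M}_{\T(\sigma)}, \frac{1}{2}]$ has a shrinking basis (which it does, by Lemma~\ref{lemmshrink}, since every $\mathcal{M}_{\T(\sigma)}$ contains all three-element sets), then the tree space $E_{\T}$ has a shrinking basis. I would therefore structure the proof as: verify the hypothesis of the relevant proposition from \cite{kurka} using Lemma~\ref{lemmshrink}, cite that proposition for shrinkingness, and record the elementary $1$-unconditionality argument above.

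\begin{proof}
Unconditionality is immediate from the definition of $\Vert \cdot \Vert_{\T}$ and Fact~\ref{fact1uncon}. Let $A \subset B$ be finite subsets of $2^{<\mathbb{N}} \setminus \{ \emptyset \}$ and let $a_{\eta} \in \mathbb{R}$ for $\eta \in B$. For each $\sigma \in 2^{\mathbb{N}}$, the coefficients of $\sum_{\eta \in A} a_{\eta} z_{\eta}$ along the branch $\sigma$ form a subset of those of $\sum_{\eta \in B} a_{\eta} z_{\eta}$, so Fact~\ref{fact1uncon} gives
$$ \Big\Vert \sum_{\ell} \Big( \sum_{\eta \in A} a_{\eta} z_{\eta} \Big)(\sigma|_{\ell}) e_{\ell} \Big\Vert_{\mathcal{M}_{\T(\sigma)}} \leq \Big\Vert \sum_{\ell} \Big( \sum_{\eta \in B} a_{\eta} z_{\eta} \Big)(\sigma|_{\ell}) e_{\ell} \Big\Vert_{\mathcal{M}_{\T(\sigma)}}. $$
Taking the supremum over $\sigma \in 2^{\mathbb{N}}$ yields $\Vert \sum_{\eta \in A} a_{\eta} z_{\eta} \Vert_{\T} \leq \Vert \sum_{\eta \in B} a_{\eta} z_{\eta} \Vert_{\T}$, which is the required $1$-unconditionality.

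For the shrinking property, recall that every $\mathcal{M}_{\T(\sigma)}$ contains all sets $\tilde{\nu}$ with $|\nu| \leq 3$, hence in particular all three-element subsets of $\mathbb{N}$. By Lemma~\ref{lemmshrink}, the basis $e_{1}, e_{2}, \dots$ of $\Tss[\mathcal{M}_{\T(\sigma)}, \frac{1}{2}]$ is therefore shrinking for every $\sigma \in 2^{\mathbb{N}}$. This verifies the hypothesis of \cite[Proposition~3.5]{kurka}, which, together with \cite[Fact~3.2]{kurka}, asserts that the tree space $E_{\T}$ built from fibres $F_{\sigma} = \Tss[\mathcal{M}_{\T(\sigma)}, \frac{1}{2}]$ possesses a shrinking basis, namely $\{ z_{\eta} : \eta \in 2^{<\mathbb{N}} \setminus \{ \emptyset \} \}$. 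Combining this with the unconditionality established above completes the proof.
\end{proof}
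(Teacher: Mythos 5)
Your proof is correct and follows essentially the same route as the paper, which likewise derives this lemma by combining Lemma~\ref{lemmshrink} (applicable since every $\mathcal{M}_{\T(\sigma)}$ contains all sets $\tilde{\nu}$ with $|\nu| \leq 3$, hence all three-element sets) with the abstract tree-space machinery of \cite[Fact~3.2]{kurka} and \cite[Proposition~3.5]{kurka}. Your explicit branchwise verification of $1$-unconditionality via Fact~\ref{fact1uncon} is a correct spelling-out of what the paper leaves to the cited machinery.
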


The following crucial lemma will be proven in a separate section.

\begin{lemma} \label{lemmal1subseq}
Let $ \T \in \mathrm{Tr}(2 \times \mathbb{N}) $ be such that $ \forall \sigma \in 2^{\mathbb{N}} : \T(\sigma) \in \mathrm{IF} $. If $ x^{*}_{1}, x^{*}_{2}, \dots $ is a normalized sequence in $ E_{\T}^{*} $ which converges to $ 0 $ in the $ w^{*} $-topology, then it has a subsequence $ y^{*}_{1}, y^{*}_{2}, \dots $ such that
$$ \Big\Vert \sum_{k=1}^{n} \lambda_{k} y^{*}_{k} \Big\Vert_{\T} \geq \frac{1}{5} \sum_{k=1}^{n} |\lambda_{k}| $$
for all $ n \in \mathbb{N} $ and $ \lambda_{1}, \dots, \lambda_{n} \in \mathbb{R} $.
\end{lemma}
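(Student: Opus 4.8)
The plan is to realize the desired subsequence as a small perturbation of a block sequence of the boundedly complete basis $\{z_\eta^*\}$ of $E_\T^*$ and then, for each linear combination, to exhibit an explicit norming vector in $E_\T$. First I would record that, since $\{z_\eta\}$ is a shrinking $1$-unconditional basis by Lemma~\ref{lemmashrink}, the dual sequence $\{z_\eta^*\}$ is a boundedly complete $1$-unconditional basis of $E_\T^*$, and that $w^{*}$-convergence of $x^*_n$ to $0$ is equivalent to coordinatewise convergence $x^*_n(z_\eta)\to0$ for every node $\eta$. A standard gliding-hump argument then produces a subsequence $y^*_k$ and an increasing sequence of levels $0=\ell_0<\ell_1<\ell_2<\cdots$ such that, after an arbitrarily small summable perturbation, each $y^*_k$ is normalized and supported on the band of nodes $\{\eta:\ell_{k-1}<|\eta|\le\ell_k\}$; by $1$-unconditionality the perturbation costs only a controllable loss in the final constant.

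For each $k$ I would choose $x_k\in E_\T$ with $\Vert x_k\Vert_\T\le1$ and $y^*_k(x_k)>1-\delta$, and, truncating $x_k$ to the band of $y^*_k$ by $1$-unconditionality, assume $x_k$ is supported on that same band. Since the bands are pairwise disjoint and the two bases are biorthogonal, $y^*_j(x_k)=0$ for $j\neq k$. Hence, for scalars $\lambda_1,\dots,\lambda_n$ with signs $\epsilon_k=\mathrm{sign}(\lambda_k)$, testing the functional against $x=\tfrac15\sum_k\epsilon_k x_k$ gives
$$\Big\Vert\sum_j\lambda_j y^*_j\Big\Vert_\T\ge\Big(\sum_j\lambda_j y^*_j\Big)(x)=\frac15\sum_k|\lambda_k|\,y^*_k(x_k)>\frac{1-\delta}{5}\sum_k|\lambda_k|,$$
so everything reduces to the uniform upper estimate $\Vert\sum_k\epsilon_k x_k\Vert_\T\le5$, the parameters $\delta$ and the perturbation errors being taken small enough to recover the clean constant $\tfrac15$.

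To bound this norm I would fix a branch $\sigma\in2^{\mathbb{N}}$ and estimate $\Vert\sum_k\epsilon_k P_\sigma x_k\Vert_{\mathcal{M}_{\T(\sigma)}}$, where $P_\sigma x_k=\sum_\ell x_k(\sigma|_\ell)e_\ell$ is a block of norm at most $1$ living on the levels of the $k$-th band, and $\Vert\sum_k\epsilon_k x_k\Vert_\T=\sup_\sigma\Vert\sum_k\epsilon_k P_\sigma x_k\Vert_{\mathcal{M}_{\T(\sigma)}}$. This is exactly where the hypothesis $\T(\sigma)\in\mathrm{IF}$ for \emph{every} $\sigma$ is indispensable: each $\mathcal{M}_{\T(\sigma)}$ then contains an infinite set, so $\Tss[\mathcal{M}_{\T(\sigma)},\tfrac12]$ is the $c_0$-sum of Lemma~\ref{lemmaembc0}, and the estimate $\Vert y\Vert_{\mathcal{M}_{\T(\sigma)}}\le\Vert E_0y\Vert_{\mathcal{M}_{\T(\sigma)}}+2\sup_j\Vert E_jy\Vert_{\mathcal{M}_{\T(\sigma)}}$ converts the Tsirelson summing of the successive blocks into a $c_0$-type bound, with the factor $2$ coming from Fact~\ref{factsupineq}.

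The main obstacle, and the reason this lemma is deferred to its own section, is precisely that the $c_0$-decomposition $(E_j)$ supplied by Lemma~\ref{lemmaembc0} depends on $\sigma$, arising from an infinite set in $\mathcal{M}_{\T(\sigma)}$, that is, from an infinite branch of $\T(\sigma)$, whereas the bands carrying the $x_k$ are fixed in advance. A single interval $E_j$ may thus swallow many whole bands, and inside such an interval the Tsirelson norm of several successive blocks is not automatically small, so the naive per-branch bound does not close. Overcoming this should require choosing the cut-off levels $\ell_k$ adaptively and interleaving the block construction with the selection of admissible families along each branch, so that along every $\sigma$ only a bounded number of blocks contribute to any single coordinate of the $c_0$-sum; the value $\tfrac15$ reflects the accumulation of the constant $2$ together with the boundary partial-block terms. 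I expect this branch-uniform control, rather than the soft steps above, to absorb essentially all of the work.
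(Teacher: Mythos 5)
Your reduction does not close, and the point at which it fails is exactly the point you defer in your last paragraph. You reduce everything to the uniform estimate $\Vert \sum_{k} \epsilon_{k} x_{k} \Vert_{\T} \leq 5$ for the full norming vectors $x_{k}$, hoping that an adaptive choice of the cut-off levels $\ell_{k}$ will enforce it. No such choice exists in general: the witnesses $\nu_{\sigma} \in [\T(\sigma)]$ giving the infinite sets in $\mathcal{M}_{\T(\sigma)}$ vary with $\sigma$ in a way that is merely measurable, not uniform, and for a branch $\sigma$ whose witness is very sparse a single interval of the $c_{0}$-decomposition of Lemma~\ref{lemmaembc0} swallows many bands; inside that interval the norm of the partial sum of $r$ normalized blocks can grow without bound (the three-element admissible families already force Tsirelson-type growth there, which is the content of Lemma~\ref{lemmaargdel}). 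Since the bands are fixed finite objects and must serve uncountably many $\sigma$ simultaneously, no interleaving of levels can prevent this for every branch. The paper accordingly never proves a bound on $\Vert \sum_{k} \epsilon_{k} x_{k} \Vert_{\T}$; instead, for each $m$ it builds a different test vector $w$ with $\Vert w \Vert_{\T} \leq 1$ and only $y^{*}_{j}(w) \geq \frac{1}{4}$ for $j \leq m$, which still yields the $\ell_{1}$-lower bound by disjointness of supports and $1$-unconditionality.

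The machinery you are missing is the measure-theoretic uniformization and the ensuing dichotomy. The paper encodes the mass distribution of each functional as a probability measure $\m_{k}(\Delta_{\eta}) = x^{*}_{k}(x_{k, \eta})$ on $2^{\mathbb{N}}$ and passes to a $w^{*}$-limit $\m$; then (Claim~\ref{claiml1subseq1}), using the Jankov--von Neumann uniformization theorem and Lusin's theorem on $\overline{\m}$-measurability of analytic sets, it finds levels $s_{1} < s_{2} < \dots$ and a \emph{closed} set $\Gamma$ with $\m(\Gamma) \geq \frac{7}{8}$ such that for every $\sigma \in \Gamma$ some member of $\mathcal{M}_{\T(\sigma)}$ meets every interval $[s_{n}, s_{n+1})$ --- uniformity is bought only on a set of large measure, not on all of $2^{\mathbb{N}}$. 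Each $x_{k}$ is then split as $u_{k} + v_{k}$ along the tree of initial segments of $\Gamma$, and one dichotomizes: either $x^{*}_{k}(u_{k}) \geq \frac{1}{2}$ along a subsequence, in which case the bands are interleaved with the $s_{n}$'s so that they form $\mathcal{M}_{\T(\sigma)}$-admissible families for all $\sigma \in \Gamma$ and $w = \frac{1}{2} \sum_{i} u_{k_{i}}$ works via Fact~\ref{factsupineq} (branches off $\Gamma$ being redirected through their longest initial segment in $\Gamma$'s tree); or $x^{*}_{k}(v_{k}) > \frac{1}{2}$, in which case the convergence $\m_{k} \to \m$ is used to prune each $v_{k_{j+1}}$ over cylinders missing $\Gamma^{(q_{k_{j}})}$ while keeping $y^{*}_{j+1}$-mass $\geq \frac{1}{4}$, after which every infinite branch meets the support of at most one summand and $\Vert \sum_{i} w_{k_{i}} \Vert_{\T} \leq 1$ holds trivially. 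Your soft preliminary steps (gliding hump, biorthogonal testing) agree with the paper's, but the admitted ``branch-uniform control'' is not a technical remainder to be absorbed: it is the theorem, and your proposed route to it via a uniform norm bound on the full sum is false.
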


It is not difficult to show that $ E_{\T}^{*} $ has the Schur property if it satisfies the conclusion of Lemma~\ref{lemmal1subseq}. Nevertheless, we show that a bit more can be said.

For a bounded sequence $ x_{1}, x_{2}, \dots $ in a Banach space $ X $, let us consider quantities
$$ \mathrm{ca}(x_{n}) = \inf_{m \in \mathbb{N}} \mathrm{diam} \{ x_{n} : n \geq m \}, $$
$$ \delta(x_{n}) = \sup_{\Vert x^{*} \Vert \leq 1} \inf_{m \in \mathbb{N}} \mathrm{diam} \{ x^{*}(x_{n}) : n \geq m \}. $$
Let $ C \geq 1 $. Following the authors of \cite{kalspur}, we say that a Banach space $ X $ has the \emph{$ C $-Schur property} if
$$ \mathrm{ca}(x_{n}) \leq C \delta(x_{n}) $$
for any bounded sequence $ x_{1}, x_{2}, \dots $ in $ X $.

\begin{proposition} \label{propETschur}
Let $ \T \in \mathrm{Tr}(2 \times \mathbb{N}) $.

{\rm (1)} If $ \forall \sigma \in 2^{\mathbb{N}} : \T(\sigma) \in \mathrm{IF} $, then $ E_{\T}^{*} $ has the $ 6 $-Schur property. Thus, $ E_{\T}^{*} $ has the Schur property and the Dunford-Pettis property.

{\rm (2)} In the opposite case, $ E_{\T}^{*} $ contains a complemented infinite-dimensional reflexive subspace. Thus, $ E_{\T}^{*} $ does not have the Schur property nor the Dunford-Pettis property.
\end{proposition}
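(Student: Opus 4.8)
The plan is to dispatch part (2) first, as it follows quickly from the structural results already in hand, and then devote the bulk of the effort to the quantitative statement in part (1).

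For part (2), if the hypothesis of (1) fails there is a $ \sigma_{0} \in 2^{\mathbb{N}} $ with $ \T(\sigma_{0}) \in \mathrm{WF} $, and then Lemma~\ref{lemmTssMT}(2) makes $ \Tss[\mathcal{M}_{\T(\sigma_{0})}, \frac{1}{2}] $ an infinite-dimensional reflexive space. By Fact~\ref{factbranches}, $ E_{\T}^{*} $ contains a $ 1 $-complemented copy of its dual $ Y $; since the dual of a reflexive space is reflexive, $ Y $ is an infinite-dimensional reflexive subspace of $ E_{\T}^{*} $ that is complemented. I would then invoke two standard facts: an infinite-dimensional reflexive space has neither the Schur nor the Dunford--Pettis property (in a reflexive space the identity map is weakly compact, so either property would force it to be completely continuous and hence the unit ball to be norm-compact); and the Schur property passes to closed subspaces while the Dunford--Pettis property passes to complemented subspaces. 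As $ Y $ is complemented in $ E_{\T}^{*} $, neither property can hold for $ E_{\T}^{*} $.

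For part (1) I would prove the quantitative inequality $ \mathrm{ca}(x_{n}^{*}) \leq 6\,\delta(x_{n}^{*}) $ for every bounded sequence $ (x_{n}^{*}) $ in $ E_{\T}^{*} $; the Schur property is then the special case of weakly null sequences (where $ \delta = 0 $ forces $ \mathrm{ca} = 0 $), and the Dunford--Pettis property follows because every Schur space enjoys it. Writing $ c = \mathrm{ca}(x_{n}^{*}) $ and noting that $ \delta(x_{n}^{*}) = \sup_{\Vert \phi \Vert \leq 1}\big( \limsup_{n}\phi(x_{n}^{*}) - \liminf_{n}\phi(x_{n}^{*}) \big) $ with $ \phi $ ranging over $ B_{E_{\T}^{**}} $, it suffices to produce one norm-one $ \phi $ whose oscillation along $ (x_{n}^{*}) $ is at least $ c/6 $. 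Fixing $ \varepsilon > 0 $, I would select $ p_{1} < q_{1} < p_{2} < q_{2} < \cdots $ with $ \Vert x_{p_{k}}^{*} - x_{q_{k}}^{*} \Vert > c - \varepsilon $, put $ d_{k}^{*} = x_{p_{k}}^{*} - x_{q_{k}}^{*} $, and, using that $ E_{\T} $ is separable and hence bounded sets of $ E_{\T}^{*} $ are $ w^{*} $-sequentially compact, pass to a subsequence along which $ d_{k}^{*} $ converges to some $ d^{*} $ in the $ w^{*} $-topology. The estimate then splits on $ t = \Vert d^{*} \Vert $. If $ t $ is large, a vector $ y \in B_{E_{\T}} $ with $ d^{*}(y) $ close to $ t $ serves directly as a witness: since $ d_{k}^{*}(y) \to d^{*}(y) $, the scalar sequence $ (x_{n}^{*}(y)) $ has two subsequential limits differing by nearly $ t $, so $ \delta(x_{n}^{*}) \geq t - \varepsilon $. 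If $ t $ is small, then $ v_{k}^{*} = d_{k}^{*} - d^{*} $ is $ w^{*} $-null with norms bounded below by about $ c - t $ (and bounded above), so its normalization is a normalized $ w^{*} $-null sequence and Lemma~\ref{lemmal1subseq} furnishes a subsequence that is $ \tfrac{1}{5} $-equivalent to the $ \ell_{1} $-basis. The corresponding alternating-sign functional has norm at most $ 5 $ by this $ \ell_{1} $-estimate and, after rescaling, yields a norm-one $ \phi \in E_{\T}^{**} $ for which $ \phi(x_{p_{k}}^{*}) - \phi(x_{q_{k}}^{*}) = \phi(d_{k}^{*}) $ oscillates by at least $ \tfrac{2}{5}(c - t) $. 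Because the oscillation of a difference sequence is at most the sum of the oscillations of its two parts, one of $ (\phi(x_{p_{k}}^{*}))_{k} $, $ (\phi(x_{q_{k}}^{*}))_{k} $ — hence the oscillation of $ \phi $ along $ (x_{n}^{*}) $ — is at least $ \tfrac{1}{5}(c - t) $. Combining the regimes, $ \delta(x_{n}^{*}) \geq \max\big( t - \varepsilon,\ \tfrac{1}{5}(c - t) \big) - o(1) $, and since the two bounds balance at $ t = c/6 $ with common value $ c/6 $, letting $ \varepsilon \to 0 $ gives $ \delta(x_{n}^{*}) \geq c/6 $.

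The main obstacle is the bookkeeping in part (1): one must pass from the oscillation of the difference sequence $ (\phi(d_{k}^{*})) $ back to an oscillation of $ \phi $ along the original sequence (the source of the factor $ \tfrac{1}{2} $), and balance this $ \ell_{1} $-type estimate against the direct estimate coming from the $ w^{*} $-limit $ d^{*} $, so that the constant $ \tfrac{1}{5} $ supplied by Lemma~\ref{lemmal1subseq} combines with the other factors to yield exactly $ 6 $. One must also check that normalizing the $ w^{*} $-null differences preserves $ w^{*} $-nullity — which it does precisely because their norms stay bounded away from $ 0 $ and $ \infty $ in the regime where Lemma~\ref{lemmal1subseq} is invoked — so that the lemma genuinely applies.
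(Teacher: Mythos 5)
Your proposal is correct, and while part (2) coincides with the paper's one-line argument (Lemma~\ref{lemmTssMT}(2) plus Fact~\ref{factbranches}, with the standard permanence facts spelled out), your part (1) takes a genuinely different route to the same constant. The paper fixes the set $ Q $ of $ w^{*} $-cluster points of the bounded sequence and runs a dichotomy calibrated in advance: either $ \mathrm{diam}\, Q \geq \frac{1}{6}\mathrm{ca}(a^{*}_{n}) $, in which case two cluster points and an almost-norming vector $ x \in B_{E_{\T}} $ witness $ \delta \geq \frac{1}{6}\mathrm{ca} $ directly, or $ \mathrm{diam}\, Q < \frac{1}{6}\mathrm{ca} $, in which case a tail-diameter argument forces $ \mathrm{dist}(a^{*}_{n}, Q) \geq \frac{5}{12}\mathrm{ca} $ along a subsequence; one then centers the original sequence at a single $ w^{*} $-limit $ a^{*} \in Q $, normalizes $ a^{*}_{n_{k}} - a^{*} $, and applies Lemma~\ref{lemmal1subseq} with Hahn--Banach exactly as you do, reading the oscillation $ \frac{2}{5} \cdot \frac{5}{12}\mathrm{ca} = \frac{1}{6}\mathrm{ca} $ off the original sequence with no transfer loss. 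You instead form difference vectors $ d^{*}_{k} = x^{*}_{p_{k}} - x^{*}_{q_{k}} $ of norm $ > c - \varepsilon $, extract a $ w^{*} $-limit $ d^{*} $, and optimize over the free parameter $ t = \Vert d^{*} \Vert $, balancing the direct bound $ \delta \geq t $ against the $ \ell_{1} $-bound $ \delta \geq \frac{1}{5}(c - t) $; the factor-$ 2 $ you lose in the subadditivity step $ \mathrm{osc}\big(\phi(d^{*}_{k_{l}})\big) \leq 2\delta $ is exactly compensated by the $ \pm $-swing $ \frac{2}{5}\Vert v^{*}_{k_{l}} \Vert $ of the alternating functional, and the balance point $ t = c/6 $ reproduces the constant $ 6 $. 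The paper's centering at a cluster point is slightly cleaner (no oscillation transfer, and the dichotomy threshold is chosen so that both cases give $ \frac{1}{6}\mathrm{ca} $ on the nose), while your balancing argument makes the provenance of the constant $ 6 $ transparent and would adapt automatically if the $ \frac{1}{5} $ of Lemma~\ref{lemmal1subseq} changed; you also correctly flag the one point where your route needs care, namely that normalizing $ v^{*}_{k} = d^{*}_{k} - d^{*} $ preserves $ w^{*} $-nullity only when the norms stay bounded below, i.e., when $ t < c - \varepsilon $, which is precisely the regime in which your second bound is nonvacuous.
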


\begin{proof}
The part (2) follows immediately from Lemma~\ref{lemmTssMT}(2) and Fact~\ref{factbranches}. Let us prove (1). Suppose that $ \forall \sigma \in 2^{\mathbb{N}} : \T(\sigma) \in \mathrm{IF} $ and that $ \mathrm{ca}(a^{*}_{n}) > 0 $ for a bounded sequence $ a^{*}_{1}, a^{*}_{2}, \dots $ in $ E_{\T}^{*} $. Let $ Q $ denote the non-empty set of all $ w^{*} $-cluster points of $ a^{*}_{n} $. To show that $ \mathrm{ca}(a^{*}_{n}) \leq 6 \delta(a^{*}_{n}) $, we consider two possibilities.

Assume first that $ \mathrm{diam} \, Q \geq \frac{1}{6} \mathrm{ca}(a^{*}_{n}) $. Let $ \varepsilon > 0 $ be given. There are $ a^{*}, b^{*} \in Q $ with $ \Vert b^{*} - a^{*} \Vert_{\T} > \frac{1}{6} \mathrm{ca}(a^{*}_{n}) - \varepsilon $. Further, there is $ x \in E_{\T}, \Vert x \Vert_{\T} \leq 1, $ such that $ (b^{*} - a^{*})(x) > \frac{1}{6} \mathrm{ca}(a^{*}_{n}) - \varepsilon $. For every $ m \in \mathbb{N} $, we can find $ k, l \geq m $ such that $ a^{*}_{k}(x) < a^{*}(x) + \varepsilon $ and $ a^{*}_{l}(x) > b^{*}(x) - \varepsilon $. Then $ \mathrm{diam} \{ a^{*}_{n}(x) : n \geq m \} \geq a^{*}_{l}(x) - a^{*}_{k}(x) > b^{*}(x) - a^{*}(x) - 2\varepsilon > \frac{1}{6} \mathrm{ca}(a^{*}_{n}) - 3\varepsilon $. It follows that $ \delta(a^{*}_{n}) \geq \frac{1}{6} \mathrm{ca}(a^{*}_{n}) - 3\varepsilon $. Since the argument works for any $ \varepsilon > 0 $, we obtain $ \delta(a^{*}_{n}) \geq \frac{1}{6} \mathrm{ca}(a^{*}_{n}) $.

Now, assume that $ \mathrm{diam} \, Q < \frac{1}{6} \mathrm{ca}(a^{*}_{n}) $. Notice that $ \mathrm{dist}(a^{*}_{n}, Q) \geq \frac{5}{12} \mathrm{ca}(a^{*}_{n}) $ for infinitely many $ n $'s. Therefore, we can find a subsequence $ a^{*}_{n_{k}} $ such that $ \mathrm{dist}(a^{*}_{n_{k}}, Q) \geq \frac{5}{12} \mathrm{ca}(a^{*}_{n}) $ for every $ k $. We may suppose that $ a^{*}_{n_{k}} $ converges to some $ a^{*} \in Q $ in the $ w^{*} $-topology. Let us put
$$ x^{*}_{k} = \frac{1}{\Vert a^{*}_{n_{k}} - a^{*} \Vert_{\T}} (a^{*}_{n_{k}} - a^{*}), \quad k = 1, 2, \dots. $$
By Lemma~\ref{lemmal1subseq}, there is a subsequence $ x^{*}_{k_{l}} $ such that
$$ \Big\Vert \sum_{l=1}^{m} \lambda_{l} x^{*}_{k_{l}} \Big\Vert_{\T} \geq \frac{1}{5} \sum_{l=1}^{m} |\lambda_{l}| $$
for all $ m \in \mathbb{N} $ and $ \lambda_{1}, \dots, \lambda_{m} \in \mathbb{R} $. Using the Hahn-Banach extension theorem, we can find $ x^{**} \in E_{\T}^{**} $ with $ \Vert x^{**} \Vert_{\T} \leq 1 $ such that
$$ x^{**}(x^{*}_{k_{l}}) = \frac{(-1)^{l}}{5}, \quad l = 1, 2, \dots. $$
Then
\begin{align*}
(-1)^{l} x^{**}(a^{*}_{n_{k_{l}}} - a^{*}) & = (-1)^{l} \Vert a^{*}_{n_{k_{l}}} - a^{*} \Vert_{\T} \, x^{**}(x^{*}_{k_{l}}) \\
 & = \frac{1}{5} \Vert a^{*}_{n_{k_{l}}} - a^{*} \Vert_{\T} \geq \frac{1}{5} \cdot \frac{5}{12} \mathrm{ca}(a^{*}_{n}) = \frac{1}{12} \mathrm{ca}(a^{*}_{n}),
\end{align*}
and so
$$ x^{**}(a^{*}_{n_{k_{2j}}}) \geq x^{**}(a^{*}) + \frac{1}{12} \mathrm{ca}(a^{*}_{n}), \quad x^{**}(a^{*}_{n_{k_{2j+1}}}) \leq x^{**}(a^{*}) - \frac{1}{12} \mathrm{ca}(a^{*}_{n}). $$
It follows that $ \delta(a^{*}_{n}) \geq \frac{1}{6} \mathrm{ca}(a^{*}_{n}) $.
\end{proof}

\begin{lemma} \label{lemmselectET}
There exist Borel mappings $ \sel, \sel^{*} : \mathrm{Tr}(2 \times \mathbb{N}) \to \mathcal{SE}(C([0, 1])) $ such that $ \sel(\T) $ is isometric to $ E_{\T} $ and $ \sel^{*}(\T) $ is isometric to $ E_{\T}^{*} $ for every $ \T \in \mathrm{Tr}(2 \times \mathbb{N}) $.
\end{lemma}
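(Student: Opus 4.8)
The plan is to derive both mappings from the selection principle of Lemma~\ref{lemmselect}, applied with $ \Xi = \mathrm{Tr}(2 \times \mathbb{N}) $. Fix once and for all an enumeration $ \eta_{1}, \eta_{2}, \dots $ of $ 2^{< \mathbb{N}} \setminus \{ \emptyset \} $. For $ \sel $ we feed Lemma~\ref{lemmselect} the canonical basis vectors $ z^{\T}_{k} = z_{\eta_{k}} $, whose span is dense in $ E_{\T} $. For $ \sel^{*} $ we use the biorthogonal functionals $ z^{\T}_{k} = z^{*}_{\eta_{k}} $; their span is dense in $ E_{\T}^{*} $ precisely because the basis $ \{ z_{\eta} \} $ is shrinking (Lemma~\ref{lemmashrink}). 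In both cases it then remains only to verify the hypothesis of Lemma~\ref{lemmselect}: that the relevant norm of a fixed finite linear combination is a Borel function of $ \T $.

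For the primal norm, fix $ n $, scalars $ \lambda_{1}, \dots, \lambda_{n} $, and put $ L = \max_{k \leq n} |\eta_{k}| $, so that $ x = \sum_{k=1}^{n} \lambda_{k} z_{\eta_{k}} $ is supported on nodes of length at most $ L $. Unwinding the definition of $ \Vert \cdot \Vert_{\T} $, for each $ \sigma \in 2^{\mathbb{N}} $ the vector $ \sum_{\ell} x(\sigma|_{\ell}) e_{\ell} $ reduces to the finite sum $ \sum_{k} \lambda_{k} e_{|\eta_{k}|} $ taken over those $ k \leq n $ with $ \eta_{k} = \sigma|_{|\eta_{k}|} $; this lies in $ \mathrm{span} \{ e_{1}, \dots, e_{L} \} $ and depends only on $ \sigma|_{L} $. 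By Fact~\ref{factMTcontin} its norm $ \Vert \cdot \Vert_{\mathcal{M}_{\T(\sigma)}} $ depends only on $ \sigma|_{L} $ and on the sequences of length at most $ L $ lying in $ \T(\sigma) $; moreover, since the partial sums defining $ \tilde{\nu} $ increase by at least one at each step, only those $ \nu $ whose coordinates are bounded by $ L $ can contribute to the traces $ \tilde{\nu} \cap \{ 1, \dots, L \} $ (longer jumps are absorbed by initial segments already present by downward closure). Hence the supremum $ \Vert x \Vert_{\T} $ is in fact a maximum over the finite set $ \sigma|_{L} \in 2^{L} $ of quantities determined by the restriction of $ \T $ to the finite collection of pairs of length at most $ L $ whose second coordinate is bounded by $ L $. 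Thus $ \T \mapsto \Vert x \Vert_{\T} $ factors through a continuous finite-valued map and is locally constant, in particular Borel.

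For the dual norm we exploit the $ 1 $-unconditionality of $ \{ z_{\eta} \} $ (Lemma~\ref{lemmashrink}). Writing $ \phi = \sum_{k=1}^{n} \lambda_{k} z^{*}_{\eta_{k}} $ and $ S = \{ \eta_{1}, \dots, \eta_{n} \} $, the coordinate projection $ P_{S} $ has norm one and satisfies $ \phi = \phi \circ P_{S} $, so that
$$ \Big\Vert \sum_{k=1}^{n} \lambda_{k} z^{*}_{\eta_{k}} \Big\Vert_{\T} = \sup \Big\{ \sum_{k=1}^{n} \lambda_{k} c_{k} : \Big\Vert \sum_{k=1}^{n} c_{k} z_{\eta_{k}} \Big\Vert_{\T} \leq 1 \Big\}. $$
This is exactly the dual norm of $ (\lambda_{1}, \dots, \lambda_{n}) $ with respect to the finite-dimensional norm $ (c_{k}) \mapsto \Vert \sum_{k} c_{k} z_{\eta_{k}} \Vert_{\T} $ on $ \mathbb{R}^{n} $. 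By the previous paragraph this finite-dimensional norm takes only finitely many values as $ \T $ varies, each on a clopen subset of $ \mathrm{Tr}(2 \times \mathbb{N}) $; dualizing preserves this, so $ \T \mapsto \Vert \phi \Vert_{\T} $ is again locally constant, hence Borel. With both measurability conditions in hand, Lemma~\ref{lemmselect} produces the desired $ \sel $ and $ \sel^{*} $.

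The routine calculations — the explicit unwinding of $ \Vert \cdot \Vert_{\T} $ and the elementary observation that dualizing a locally constant family of finite-dimensional norms stays locally constant — are all that remain. The only structural input beyond Lemma~\ref{lemmselect} is that handling the dual requires \emph{both} the shrinking property, to ensure $ \{ z^{*}_{\eta} \} $ spans $ E_{\T}^{*} $ and thus fits the format of Lemma~\ref{lemmselect}, \emph{and} the unconditionality, to collapse the a priori infinite-dimensional dual-norm computation onto the manifestly finitely-determined quantity above. This reduction to finite data is the crux of the argument, and it is where I expect the main care to be needed.
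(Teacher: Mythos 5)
Your proposal is correct, and for the primal map $ \sel $ it is essentially the paper's own argument: both apply Lemma~\ref{lemmselect} and verify via Fact~\ref{factMTcontin} that $ \T \mapsto \Vert x \Vert_{\T} $ is continuous because it is determined by finitely much of $ \T $ (the paper phrases this as a maximum over a finite set $ \Sigma \subset 2^{\mathbb{N}} $ supporting $ x $ on initial segments, composed of three continuous maps; your maximum over $ 2^{L} $ together with the bounded-coordinates observation is the same computation made fully finitary). For $ \sel^{*} $ you take a genuinely different route. The paper likewise reduces to Lemma~\ref{lemmselect} via the shrinking property from Lemma~\ref{lemmashrink}, but it obtains Borelness of $ \T \mapsto \Vert x^{*} \Vert_{\T} $ simply by writing the dual norm as a countable supremum $ \Vert x^{*} \Vert_{\T} = \sup_{x \in S} x^{*}(x) / \Vert x \Vert_{\T} $ over the set $ S $ of non-zero rational finitely supported vectors, using only the density of $ c_{00}(2^{< \mathbb{N}} \setminus \{ \emptyset \}) $ in $ E_{\T} $ --- no unconditionality at all. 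You instead use the norm-one coordinate projection $ P_{S} $ furnished by $ 1 $-unconditionality and the identity $ \phi = \phi \circ P_{S} $ to collapse the dual norm to a finite-dimensional duality; this is valid (the clopen decomposition of $ \mathrm{Tr}(2 \times \mathbb{N}) $ is uniform in $ (c_{k}) $, since the determining finite restriction of $ \T $ depends only on $ L $, not on the coefficients), and it yields the strictly stronger conclusion that $ \T \mapsto \Vert \phi \Vert_{\T} $ is locally constant, hence continuous, where the paper's countable supremum gives only Borelness. The one inaccuracy is your closing claim that unconditionality is structurally required to handle the dual: the paper's supremum trick shows shrinkingness alone suffices; unconditionality buys you a finitary computation and continuity, but it is not necessary.
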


\begin{proof}
Let us prove the existence of $ \sel $ first. Due to Lemma~\ref{lemmselect}, it is sufficient to show that the function $ \T \mapsto \Vert x \Vert_{\T} $ is Borel for every $ x \in c_{00}(2^{< \mathbb{N}} \setminus \{ \emptyset \}) $. Let $ \Sigma $ be a finite subset of $ 2^{\mathbb{N}} $ such that $ x $ is supported by initial segments of elements of $ \Sigma $. By Fact~\ref{factMTcontin}, we have
$$ \Vert x \Vert_{\T} = \max_{\sigma \in \Sigma} \Big\Vert \sum_{\ell = 1}^{\infty} x(\sigma|_{\ell}) e_{\ell} \Big\Vert_{\mathcal{M}_{\T(\sigma)}}, \quad \T \in \mathrm{Tr}(2 \times \mathbb{N}). $$
For this reason, $ \T \mapsto \Vert x \Vert_{\T} $ is the maximum of finitely many continuous functions. Indeed,
\begin{itemize}
\item the mapping $ \T \mapsto \T(\sigma) $ is continuous from $ \mathrm{Tr}(2 \times \mathbb{N})$ into $ \mathrm{Tr}(\mathbb{N}) $ for every $ \sigma \in \Sigma $ (easy),
\item the mapping $ T \mapsto \mathcal{M}_{T} $ is continuous from $ \mathrm{Tr}(\mathbb{N}) $ into $ \mathcal{K}(2^{\mathbb{N}}) $ (just apply the fact that the Vietoris topology on $ \mathcal{K}(2^{\mathbb{N}}) $ is generated by the sets $ \{ \mathcal{M} \in \mathcal{K}(2^{\mathbb{N}}) : \mathcal{M} \cap \Delta_{\eta} \neq \emptyset \} $ and their complements, where $ \eta $ varies over sequences from $ 2^{< \mathbb{N}} $ and $ \Delta_{\eta} $ denotes the clopen set $ \{ \sigma \in 2^{\mathbb{N}} : \sigma|_{|\eta|} = \eta \} $),
\item the function $ \mathcal{M} \mapsto \Vert y \Vert_\mathcal{M} $ is continuous for every $ y \in c_{00} $ (see the proof of Lemma~\ref{lemmselectTss}).
\end{itemize}

Now, let us prove the existence of $ \sel^{*} $. By Lemma~\ref{lemmashrink}, the system $ \{ z_{\eta} : \eta \in 2^{< \mathbb{N}} \setminus \{ \emptyset \} \} $ is a shrinking basis of $ E_{\T} $ for every $ \T $. Using Lemma~\ref{lemmselect} again, it is therefore sufficient to show that the function $ \T \mapsto \Vert x^{*} \Vert_{\T} $ is Borel for every linear form $ x^{*} $ on $ c_{00}(2^{< \mathbb{N}} \setminus \{ \emptyset \}) $ with a finite support. Let $ S \subset c_{00}(2^{< \mathbb{N}} \setminus \{ \emptyset \}) $ be the countable set of all non-zero vectors with rational coordinates. Then
$$ \Vert x^{*} \Vert_{\T} = \sup_{x \in S} \frac{x^{*}(x)}{\Vert x \Vert_{\T}}, \quad \T \in \mathrm{Tr}(2 \times \mathbb{N}). $$
It follows from the first part of the proof that $ \T \mapsto \Vert x^{*} \Vert_{\T} $ is Borel.
\end{proof}

\begin{proof}[Proof of Theorem \ref{thmschur}]
The class of all separable Banach spaces with the Schur property is $ \mathbf{\Pi}^{1}_{2} $ (\cite[Theorem~28]{braga}), as well as the class of all separable Banach spaces with the Dunford-Pettis property (Lemma~\ref{lemmDP}). Let us show that both classes are $ \mathbf{\Pi}^{1}_{2} $-hard. Let $ \sel^{*} $ be a mapping provided by Lemma~\ref{lemmselectET}. Using Proposition~\ref{propETschur}, we obtain
\begin{align*}
\forall \sigma \in 2^{\mathbb{N}} : \T(\sigma) \in \mathrm{IF} \quad & \Leftrightarrow \quad \sel^{*}(\T) \textrm{ has the Schur property} \\
& \Leftrightarrow \quad \sel^{*}(\T) \textrm{ has the Dunford-Pettis property}
\end{align*}
for every $ \T \in \mathrm{Tr}(2 \times \mathbb{N}) $. It means that $ \sel^{*}(\T) $ has the Schur property (Dunford-Pettis property) if and only if $ \T \in C $, where $ C \subset \mathrm{Tr}(2 \times \mathbb{N}) $ is the $ \mathbf{\Pi}^{1}_{2} $-complete set from Lemma~\ref{lemmCPCA}. It remains to apply Lemma~\ref{lemmhardset}.
\end{proof}

\begin{remark}
(i) It is possible to use \cite[Theorem~3.2]{behrends} to quantify the Schur property of $ E_{\T}^{*} $ in a bit different direction. If the assumption $ \forall \sigma \in 2^{\mathbb{N}} : \T(\sigma) \in \mathrm{IF} $ is met, then $ E_{\T}^{*} $ has the strong Schur property in the sense that every bounded sequence $ a^{*}_{1}, a^{*}_{2}, \dots $ in $ E_{\T}^{*} $ with $ \inf \{ \Vert a^{*}_{n} - a^{*}_{m} \Vert_{\T} : n \neq m \} > \varepsilon $ has a subsequence $ a^{*}_{n_{j}} $ such that
$$ \Big\Vert \sum_{j=1}^{k} \lambda_{j} a^{*}_{n_{j}} \Big\Vert_{\T} \geq \frac{\varepsilon}{12} \sum_{j=1}^{k} |\lambda_{j}| $$
for all $ k \in \mathbb{N} $ and $ \lambda_{1}, \dots, \lambda_{k} \in \mathbb{R} $.

(ii) If the assumption $ \forall \sigma \in 2^{\mathbb{N}} : \T(\sigma) \in \mathrm{IF} $ is met, a quantitative version of the Dunford-Pettis property of $ E_{\T}^{*} $ and of $ E_{\T} $ can be obtained as well (see \cite[Proposition~6.4 and Theorem~6.5]{kackalspur}).

(iii) In \cite{kurka}, a question was considered whether the proposed tree space method can be used for amalgamating of spaces with the Schur property (see \cite[Remark~3.7(c)]{kurka} for the exact formulation). Proposition~\ref{propETschur}(1) shows a concrete example of a family of spaces with the Schur property for which the answer is positive, although not trivial. Let us note that to prove that $ E_{\T}^{*} $ has the Schur property would be much simpler if we had the positive answer to the following question: \emph{Does a Banach space $ X $ has necessarily the Schur property if it has a subset $ W $ such that $ \overline{\mathrm{co}} \, W = B_{X} $ and every weakly convergent sequence of elements of $ W $ is convergent in the norm?}
\end{remark}

\section{Proof of Lemma \ref{lemmal1subseq}}

Let $ \T \in \mathrm{Tr}(2 \times \mathbb{N}) $ satisfying  $ \forall \sigma \in 2^{\mathbb{N}} : \T(\sigma) \in \mathrm{IF} $ be given, together with a normalized sequence $ x^{*}_{1}, x^{*}_{2}, \dots $ in $ E_{\T}^{*} $ converging to $ 0 $ in the $ w^{*} $-topology. Let us recall that our task is to find a subsequence $ y^{*}_{1}, y^{*}_{2}, \dots $ such that
$$ \Big\Vert \sum_{k=1}^{n} \lambda_{k} y^{*}_{k} \Big\Vert_{\T} \geq \frac{1}{5} \sum_{k=1}^{n} |\lambda_{k}| $$
for all $ n \in \mathbb{N} $ and $ \lambda_{1}, \dots, \lambda_{n} \in \mathbb{R} $.

Note that each $ x^{*} \in E_{\T}^{*} $ can be viewed as the system $ \{ x^{*}(z_{\eta}) \}_{\eta \in 2^{< \mathbb{N}} \setminus \{ \emptyset \}} $ of real numbers. By Lemma~\ref{lemmashrink}, elements with a finite support are dense in $ E_{\T}^{*} $. Note also that $ x^{*}_{k}(z_{\eta}) \to 0 $ for every $ \eta $.

By the passage to a subsequence and a small perturbation, we can obtain a sequence (which is denoted also $ x^{*}_{k} $) satisfying:

(WLOG-1) There are $ 1 \leq p_{1} \leq q_{1} < p_{2} \leq q_{2} < \dots $ such that $ x^{*}_{k} $ is supported by sequences of length in $ [p_{k}, q_{k}] $. (Because of the perturbation, we just need to prove the desired inequality with a constant better than $ \frac{1}{5} $).

For every $ k $, let $ x_{k} \in E_{\T} $ be such that $ x^{*}_{k}(x_{k}) = \Vert x_{k} \Vert_{\T} = 1 $ and $ x_{k} $ is supported by sequences of length in $ [p_{k}, q_{k}] $ (as well as $ x^{*}_{k} $).
Let
$$ x_{k} = \sum_{\eta \in 2^{p_{k}}} x_{k, \eta} $$
be the decomposition of $ x_{k} $ such that $ x_{k, \eta} $ is supported by sequences which extend $ \eta $.

Let us denote $ \Delta = 2^{\mathbb{N}} $ and $ \Delta_{\eta} = \{ \sigma \in \Delta : \sigma|_{|\eta|} = \eta \} $. Let $ \Sigma_{\ell} $ be the $ \sigma $-algebra generated by the sets $ \Delta_{\eta}, \eta \in 2^{\ell} $. The formula
$$ \m_{k}(\Delta_{\eta}) = x^{*}_{k}(x_{k, \eta}), \quad \eta \in 2^{p_{k}}, $$
defines a probability measure on $ \Sigma_{p_{k}} $. (We have $ x^{*}_{k}(x_{k, \eta}) \geq 0 $ because $ 1 - x^{*}_{k}(x_{k, \eta}) = x^{*}_{k}(x_{k} - x_{k, \eta}) \leq \Vert x_{k} - x_{k, \eta} \Vert_{\T} \leq \Vert x_{k} \Vert_{\T} = 1 $).

Every $ \m_{k} $ can be extended to a Borel probability measure on $ \Delta $. The sequence of these extensions has a cluster point in the $ w^{*} $-topology of $ C(\Delta)^{*} $. We can therefore assume that:

(WLOG-2) The measures $ \m_{k} $ converge to a Borel probability measure $ \m $ on $ \Delta $ in the sense that $ \m_{k}(\Delta_{\eta}) \to \m(\Delta_{\eta}) $ for every $ \eta \in 2^{< \mathbb{N}} $.

\begin{claim} \label{claiml1subseq1}
There is an increasing sequence $ s_{1} < s_{2} < \dots $ of natural numbers and a closed subset $ \Gamma \subset \Delta $ such that $ \m(\Gamma) \geq \frac{7}{8} $ and, for every $ \sigma \in \Gamma $, the system $ \mathcal{M}_{\T(\sigma)} $ contains a set which intersects $ [s_{n}, s_{n+1}) $ for each $ n \in \mathbb{N} $.
\end{claim}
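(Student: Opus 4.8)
The plan is to choose, measurably in $\sigma$, a single infinite branch of $\T(\sigma)$, and then to construct the cut points $s_n$ \emph{adaptively}, so that each interval $[s_n,s_{n+1})$ is forced to contain a partial sum of the selected branch for all but a small-measure set of $\sigma$'s.

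First I would produce a measurable branch selection. The set $F=\{(\sigma,\nu)\in\Delta\times\mathbb{N}^{\mathbb{N}}:\nu\in[\T(\sigma)]\}$ is closed, since $\nu\in[\T(\sigma)]$ means $(\sigma|_{k},\nu|_{k})\in\T$ for every $k$, a countable intersection of clopen conditions. By the hypothesis $\forall\sigma\in 2^{\mathbb{N}}:\T(\sigma)\in\mathrm{IF}$, the projection of $F$ onto $\Delta$ is all of $\Delta$, so a measurable uniformization (Jankov--von Neumann, or the leftmost branch) yields a measurable map $\sigma\mapsto\nu_{\sigma}\in[\T(\sigma)]$. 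Writing $\tilde{\nu}_{\sigma}=\{t_{1}(\sigma)<t_{2}(\sigma)<\dots\}$, each $t_{j}$ is a measurable $\mathbb{N}$-valued function, and $\tilde{\nu}_{\sigma}\in\mathcal{M}_{\T(\sigma)}$ is an infinite set.

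Next I would build the $s_n$ recursively. Put $s_{1}=1$. Given $s_{n}$, consider $T_{n}(\sigma)=\min\{t_{j}(\sigma):t_{j}(\sigma)\geq s_{n}\}$; this is finite because $\tilde{\nu}_{\sigma}$ is infinite, and it is measurable. Since a finite-valued measurable function has vanishing tails under the probability measure $\m$, I can pick $s_{n+1}>s_{n}$ with $\m\{\sigma:T_{n}(\sigma)\geq s_{n+1}\}\leq 2^{-n-4}$. The key point is that whenever $T_{n}(\sigma)<s_{n+1}$ one has $T_{n}(\sigma)\in[s_{n},s_{n+1})\cap\tilde{\nu}_{\sigma}$, so the $n$-th interval is met. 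Hence on the measurable set $\Gamma_{0}=\bigcap_{n}\{\sigma:T_{n}(\sigma)<s_{n+1}\}$ every interval $[s_{n},s_{n+1})$ is hit by $\tilde{\nu}_{\sigma}$, and $\m(\Gamma_{0})\geq 1-\sum_{n}2^{-n-4}=15/16$. Finally, inner regularity of $\m$ provides a closed $\Gamma\subseteq\Gamma_{0}$ with $\m(\Gamma)\geq 7/8$, and for each $\sigma\in\Gamma$ the set $\tilde{\nu}_{\sigma}\in\mathcal{M}_{\T(\sigma)}$ is as required.

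The main obstacle to anticipate is that the obvious strategy---selecting a branch, bounding its partial sums by $t_{j}(\sigma)\leq M_{j}$ on a large set, and declaring the $s_n$ in advance---does \emph{not} work: an upper bound on the $j$-th partial sum does not preclude a long gap in $\tilde{\nu}_{\sigma}$, and such gaps may sit at $\sigma$-dependent locations, so no fixed sequence $s_{n}$ can straddle all of them. The adaptive construction above circumvents this by defining $s_{n+1}$ in terms of the actual position $T_n(\sigma)$ of the first partial sum above $s_{n}$, discarding only a set of measure at most $2^{-n-4}$ at each stage; the only remaining technical point is the passage from the measurable $\Gamma_{0}$ to a closed $\Gamma$, which is handled by inner regularity of $\m$.
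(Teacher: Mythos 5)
Your proposal is correct and is essentially the paper's own proof: the same Jankov--von Neumann branch selection $\sigma \mapsto \nu_{\sigma} \in [\T(\sigma)]$, the same recursive choice of $s_{n+1}$ so that the exceptional set has measure at most a summable tolerance (your set $\{\sigma : T_{n}(\sigma) < s_{n+1}\}$ is exactly the paper's $\Lambda_{s_{n}, s_{n+1}} = \{\sigma : [s_{n}, s_{n+1}) \cap \tilde{\nu}_{\sigma} \neq \emptyset\}$, and your ``adaptive'' construction is precisely the paper's recursion), followed by inner regularity to extract a closed $\Gamma$. The only point worth tightening is measurability: the selector is only $\sigma(\mathbf{\Sigma}^{1}_{1})$-measurable (the leftmost-branch map is likewise not Borel), so, as the paper notes via Lusin's theorem, the sets $\{T_{n}(\sigma) \geq s_{n+1}\}$ are measurable with respect to the completion $\overline{\m}$ rather than Borel --- which suffices, since $\Gamma$ itself is closed and $\m(\Gamma) = \overline{\m}(\Gamma)$.
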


\begin{proof}
The set $ [\T] $ of all infinite branches of $ \T $ is a closed subset of $ 2^{\mathbb{N}} \times \mathbb{N}^{\mathbb{N}} $ whose sections $ [\T(\sigma)], \sigma \in 2^{\mathbb{N}}, $ are non-empty due to the assumption of the lemma. By the Jankov-von Neumann uniformization theorem (see e.g. (18.1) in \cite{kechris}), there exists a selector $ \sigma \in 2^{\mathbb{N}} \mapsto \nu_{\sigma} \in [\T(\sigma)] $ which is measurable with respect to the $ \sigma $-algebra generated by the analytic subsets of $ 2^{\mathbb{N}} $. By a theorem of Lusin (see e.g. (21.10) in \cite{kechris}), members of this $ \sigma $-algebra are $ \overline{\m} $-measurable, where $ \overline{\m} $ denotes the completion of $ \m $.

For natural numbers $ r \leq s $, let us denote
$$ \Lambda_{r, s} = \{ \sigma \in \Delta : [r, s) \cap \tilde{\nu}_{\sigma} \neq \emptyset \}. $$
For every $ r \in \mathbb{N} $ and $ \varepsilon > 0 $, since $ \bigcup_{s=r}^{\infty} \Lambda_{r, s} = \Delta $, there is $ s \geq r $ such that $ \overline{\m}(\Lambda_{r, s}) \geq 1 - \varepsilon $.

Let us take a sequence $ \varepsilon_{1}, \varepsilon_{2}, \dots $ of positive numbers such that $ \sum_{n=1}^{\infty} \varepsilon_{n} < \frac{1}{8} $. Let $ s_{1} = 1 $ and let $ s_{2}, s_{3}, \dots $ be chosen in the way that
$$ \overline{\m}(\Lambda_{s_{n}, s_{n+1}}) \geq 1 - \varepsilon_{n} $$
for $ n = 1, 2, \dots $. The set
$$ \Gamma_{0} = \bigcap_{n=1}^{\infty} \Lambda_{s_{n}, s_{n+1}} $$
fulfills $ \overline{\m}(\Gamma_{0}) > 1 - \frac{1}{8} = \frac{7}{8} $ and, for every $ \sigma \in \Gamma_{0} $, the system $ \mathcal{M}_{\T(\sigma)} $ contains the set $ \tilde{\nu}_{\sigma} $ which intersects $ [s_{n}, s_{n+1}) $ for each $ n \in \mathbb{N} $. Finally, let $ \Gamma \subset \Gamma_{0} $ be a compact subset with $ \overline{\m}(\Gamma) \geq \frac{7}{8} $.
\end{proof}

Now, let us consider such $ s_{1} < s_{2} < \dots $ and $ \Gamma \subset \Delta $ as in Claim~\ref{claiml1subseq1}. Let $ \theta \subset 2^{< \mathbb{N}} \setminus \{ \emptyset \} $ denote the set of all non-empty initial segments of sequences from $ \Gamma $. Let
$$ x_{k} = u_{k} + v_{k} $$
be the decomposition of $ x_{k} $ such that $ u_{k} $ is supported by $ \theta $ and $ v_{k} $ is supported by the complement of $ \theta $. Let
$$ x_{k, \eta} = u_{k, \eta} + v_{k, \eta} $$
be the analogous decomposition of $ x_{k, \eta} $.

We are ready to establish our third and last additional assumption.

(WLOG-3) One of the following possibilities takes place:
\begin{itemize}
\item[{(I)}] $ x^{*}_{k}(u_{k}) \geq \frac{1}{2} $ for every $ k $.
\item[{(II)}] $ x^{*}_{k}(v_{k}) > \frac{1}{2} $ for every $ k $.
\end{itemize}

\begin{claim} \label{claiml1subseq2}
There is a subsequence $ y^{*}_{j} $ of $ x^{*}_{k} $ such that, for every $ m \in \mathbb{N} $, there is $ w \in E_{\T} $ with $ \Vert w \Vert_{\T} \leq 1 $ and
$$ y^{*}_{j}(w) \geq \frac{1}{4}, \quad j = 1, 2, \dots, m. $$
\end{claim}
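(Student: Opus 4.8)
The plan is to fix the subsequence $y^*_j=x^*_{n_j}$ once and for all and then, for each $m$, take the witness $w$ to be a normalization of a partial sum of the dominant halves of the $x_{n_j}$ singled out by (WLOG-3). Two observations reduce the whole statement to a norm estimate. First, since $u_k,v_k$ are coordinate restrictions of $x_k$, Fact~\ref{fact1uncon} gives $\Vert u_{n_j}\Vert_{\T},\Vert v_{n_j}\Vert_{\T}\le1$. Second, because $x^*_k$ is supported on levels in the pairwise disjoint intervals $[p_k,q_k]$, we have $x^*_{n_j}(u_{n_{j'}})=x^*_{n_j}(v_{n_{j'}})=0$ for $j\ne j'$, so $y^*_j\big(\sum_{j'}u_{n_{j'}}\big)=x^*_{n_j}(u_{n_j})$ and likewise for $v$. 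Hence in Case~(I) the candidate $w=\tfrac12\sum_{j\le m}u_{n_j}$ already gives $y^*_j(w)=\tfrac12x^*_{n_j}(u_{n_j})\ge\tfrac14$, and in Case~(II) the candidate $w=\tfrac12\sum_{j\le m}v_{n_j}$ gives $y^*_j(w)>\tfrac14$. Everything thus comes down to the uniform bound $\Vert\sum_{j\le m}u_{n_j}\Vert_{\T}\le2$ (respectively for $v$), which I would read off from the Tsirelson estimate Fact~\ref{factsupineq} once the blocks are seen to be admissible along every branch.

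In Case~(I) I would first thin the subsequence so that the support intervals $I_{n_j}=[p_{n_j},q_{n_j}]$ are separated by whole blocks $[s_b,s_{b+1})$ from Claim~\ref{claiml1subseq1}. Fix $\sigma$ and evaluate $\sum_\ell\big(\sum_{j}u_{n_j}\big)(\sigma|_\ell)e_\ell$ in $\Vert\cdot\Vert_{\mathcal M_{\T(\sigma)}}$. If $\sigma\in\Gamma$, every $\sigma|_\ell$ lies in $\theta$, the $I_{n_j}$ are successive, and the set $\tilde\nu_\sigma$, meeting each $[s_b,s_{b+1})$, supplies markers witnessing that $\{I_{n_j}\}_{j\le m}$ is $\mathcal M_{\T(\sigma)}$-admissible; Fact~\ref{factsupineq} bounds the norm by $2\max_j\Vert u_{n_j}^\sigma\Vert\le2$. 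If $\sigma\notin\Gamma$, let $L$ be the first level with $\sigma|_L\notin\theta$; as $u$ is supported on $\theta$, the vector lives on levels below $L$, and $\sigma|_{L-1}\in\theta$ coincides with $\gamma|_{L-1}$ for some $\gamma\in\Gamma$, so by Fact~\ref{factMTcontin} its norm equals the one computed along $\gamma$, which is $\le2$ by the previous case and $1$-unconditionality. Taking the supremum yields $\Vert w\Vert_{\T}\le1$.

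Case~(II) is where I expect the real work. The analogous goal $\Vert\sum_{j\le m}v_{n_j}\Vert_{\T}\le2$ demands admissibility of the blocks along every branch that meets the off-$\theta$ supports; but such branches have already left $\theta$ before the support levels, so the markers of Claim~\ref{claiml1subseq1}, reaching only up to the exit level and only on $\Gamma$, are useless afterwards, and the truncation reduction of Case~(I) breaks down. The one favourable mechanism is an \emph{exit-once} principle: a branch crosses from $\theta$ to its complement at a single level, so the part of $v_{n_j}$ hanging below a $\theta$-node at level $p_{n_j}$ is met by a given $\sigma$ for at most one $j$, and that component contributes norm $\le1$ for free. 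The genuinely hard piece is the early-escaped part, supported below nodes already outside $\theta$ at level $p_{n_j}$, since a single off-$\theta$ branch can meet an entire tail of these blocks. To control it I would use that every $\T(\sigma)\in\mathrm{IF}$, so $\mathcal M_{\T(\sigma)}$ always contains an infinite set furnishing an admissible marker structure in the spirit of Lemma~\ref{lemmaembc0}, together with the fact that admissibility of blocks below a fixed level depends on $\sigma$ through only finitely many coordinates (Fact~\ref{factMTcontin}); a careful inductive thinning of $\{n_j\}$, spacing the intervals $I_{n_j}$ far apart relative to this finite data, should render the escaped blocks simultaneously admissible in every $\mathcal M_{\T(\sigma)}$. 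Reconciling this uniform marker requirement over all off-$\theta$ branches with the uncontrolled density of their defining infinite sets is precisely the obstacle, and is presumably what the remaining claims of this section are arranged to surmount.
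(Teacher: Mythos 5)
Your Case (I) is exactly the paper's argument: the same candidate $ w = \frac{1}{2}\sum_{j \leq m} u_{k_j} $, the same thinning so that the support intervals are separated by whole blocks $ [s_n, s_{n+1}) $ from Claim~\ref{claiml1subseq1}, admissibility along branches in $ \Gamma $ via $ \tilde{\nu}_\sigma $ plus Fact~\ref{factsupineq}, and the transfer to branches outside $ \Gamma $ via the longest initial segment in $ \theta $ and Fact~\ref{factMTcontin}. Your diagnosis of Case (II) is also accurate as far as it goes: the exit-once phenomenon handles the components that leave $ \theta $ "freshly," and the early-escaped components (those hanging below nodes whose ancestors left $ \theta $ before the previous block ended) are indeed the crux. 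But your proposed cure for them is a genuine gap, and in fact the wrong direction. You hope to thin $ \{k_j\} $ so that the escaped blocks become \emph{simultaneously} $ \mathcal{M}_{\T(\sigma)} $-admissible for \emph{every} $ \sigma \in 2^{\mathbb{N}} $. This cannot work: Claim~\ref{claiml1subseq1} produces markers only on a compact set $ \Gamma $ of $ \m $-measure $ \geq \frac{7}{8} $, not on all of $ \Delta $, and this is unavoidable. The selector $ \sigma \mapsto \nu_\sigma $ is merely measurable, the sets $ \Lambda_{r,s} $ need not be open, and so the increasing union $ \bigcup_s \Lambda_{r,s} = \Delta $ yields no uniform $ s $; for a suitably chosen $ \T $ the branch sets $ \tilde{\nu}_\sigma $ can outrun any fixed sequence of intervals on a set of branches that no thinning controls. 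Fact~\ref{factMTcontin} does not help here, since it governs finite-dimensional norm evaluations, not the existence of infinite admissible marker structures. If your uniform-admissibility scheme were viable, the measures $ \m_k $ of (WLOG-2) would be superfluous --- whereas they are precisely the missing ingredient.

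The paper's Case (II) instead \emph{deletes} the early-escaped components rather than estimating them: it passes to a subsequence with $ \m_{k_{j+1}}(\Gamma^{(q_{k_j})}) \geq \frac{3}{4} $ (possible by the $ w^* $-convergence $ \m_k \to \m $ and $ \m(\Gamma) \geq \frac{7}{8} $) and sets $ w_{k_{j+1}} = v_{k_{j+1}} - \sum_{\eta} v_{k_{j+1},\eta} $, the sum over $ \eta \in 2^{p_{k_{j+1}}} $ with $ \Delta_\eta \cap \Gamma^{(q_{k_j})} = \emptyset $. Since $ x^*_{k_{j+1}}(v_{k_{j+1},\eta}) \leq x^*_{k_{j+1}}(x_{k_{j+1},\eta}) = \m_{k_{j+1}}(\Delta_\eta) $, the pruning costs at most $ \m_{k_{j+1}}(\Delta \setminus \Gamma^{(q_{k_j})}) \leq \frac{1}{4} $, so $ x^*_{k_{j+1}}(w_{k_{j+1}}) \geq \frac{1}{4} $. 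After pruning, your exit-once principle upgrades to the statement that \emph{every} infinite branch meets the support of at most one $ w_{k_j} $, whence $ \Vert \sum_{i \leq m} w_{k_i} \Vert_{\T} \leq \max_i \Vert w_{k_i} \Vert_{\T} \leq 1 $ directly --- no admissibility, no Fact~\ref{factsupineq}, and no factor $ \frac{1}{2} $ are needed in this case at all. Note also that your unpruned candidate $ \frac{1}{2}\sum_{j \leq m} v_{k_j} $ admits no bound like $ \Vert \sum v_{k_j} \Vert_{\T} \leq 2 $ in general, precisely because a single off-$ \theta $ branch can carry a whole tail of escaped blocks whose accumulated norm grows with $ m $; so the pruning is not an optional refinement but the step that makes the claim true.
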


Before the proof of this claim, we show that the provided subsequence $ y^{*}_{j} $ of $ x^{*}_{k} $ has the desired property. Given $ m \in \mathbb{N} $ and $ \lambda_{1}, \dots, \lambda_{m} \in \mathbb{R} $, taking a suitable $ w $ and using that $ x^{*}_{1}, x^{*}_{2}, \dots $ have disjoint supports, we obtain
$$ \Big\Vert \sum_{j=1}^{m} \lambda_{j} y^{*}_{j} \Big\Vert_{\T} = \Big\Vert \sum_{j=1}^{m} |\lambda_{j}| y^{*}_{j} \Big\Vert_{\T} \geq \Big\Vert \sum_{j=1}^{m} |\lambda_{j}| y^{*}_{j} \Big\Vert_{\T} \Vert w \Vert_{\T} $$
$$ \geq \Big( \sum_{j=1}^{m} |\lambda_{j}| y^{*}_{j} \Big) (w) = \sum_{j=1}^{m} |\lambda_{j}| y^{*}_{j}(w) \geq \frac{1}{4} \sum_{j=1}^{m} |\lambda_{j}|. $$
Let us recall that a better constant than $ \frac{1}{5} $ is needed because of the perturbation done at the beginning of this section. As the constant $ \frac{1}{4} $ is greater than $ \frac{1}{5} $, Lemma~\ref{lemmal1subseq} is proven.

It remains to prove Claim ~\ref{claiml1subseq2}. We consider separately the possibilities (I) and (II) introduced above.

\begin{proof}[Proof of Claim~\ref{claiml1subseq2}, case (I)]
We choose a subsequence $ y^{*}_{j} = x^{*}_{k_{j}} $ in the way that
$$ s_{n_{1}} < s_{n_{1}+1} < p_{k_{1}} \leq q_{k_{1}} < s_{n_{2}} < s_{n_{2}+1} < p_{k_{2}} \leq q_{k_{2}} < s_{n_{3}} < s_{n_{3}+1} < \dots $$
for some suitable $ n_{1}, n_{2}, \dots $. Let us consider the intervals in $ \mathbb{N} $ given by
$$ I_{j} = [p_{k_{j}}, q_{k_{j}}], \quad j = 1, 2, \dots. $$
Due to the choice of $ s_{1} < s_{2} < \dots $ and $ \Gamma \subset \Delta $ (see Claim~\ref{claiml1subseq1}), the family $ \{ I_{1}, \dots, I_{m} \} $ is $ \mathcal{M}_{\T(\sigma)} $-admissible for every $ m \in \mathbb{N} $ and every $ \sigma \in \Gamma $.

Given $ m \in \mathbb{N} $, let us define
$$ w = \frac{1}{2} \sum_{i=1}^{m} u_{k_{i}}. $$
Using (I), we obtain for $ 1 \leq j \leq m $ that
$$ y^{*}_{j}(w) = x^{*}_{k_{j}}(w) = \frac{1}{2} \sum_{i=1}^{m} x^{*}_{k_{j}}(u_{k_{i}}) = \frac{1}{2} \, x^{*}_{k_{j}}(u_{k_{j}}) \geq \frac{1}{2} \cdot \frac{1}{2} = \frac{1}{4}, $$
so it is sufficient to verify that $ \Vert w \Vert_{\T} \leq 1 $, i.e., that
$$ \forall \sigma \in 2^{\mathbb{N}} : \quad \Big\Vert \sum_{\ell = 1}^{\infty} w(\sigma|_{\ell}) e_{\ell} \Big\Vert_{\mathcal{M}_{\T(\sigma)}} \leq 1. $$

Consider $ \sigma \in \Gamma $ first. For $ 1 \leq i \leq m $, the point $ \sum_{\ell = 1}^{\infty} u_{k_{i}}(\sigma|_{\ell}) e_{\ell} $ is supported by $ [p_{k_{i}}, q_{k_{i}}] = I_{i} $. Using $ \mathcal{M}_{\T(\sigma)} $-admissibility of $ \{ I_{1}, \dots, I_{m} \} $ and Fact~\ref{factsupineq}, we obtain
\begin{align*}
\Big\Vert \sum_{\ell = 1}^{\infty} w(\sigma|_{\ell}) e_{\ell} \Big\Vert_{\mathcal{M}_{\T(\sigma)}} & = \frac{1}{2} \Big\Vert \sum_{i=1}^{m} \sum_{\ell = 1}^{\infty} u_{k_{i}}(\sigma|_{\ell}) e_{\ell} \Big\Vert_{\mathcal{M}_{\T(\sigma)}} \\
 & \leq \sup_{1 \leq i \leq m} \Big\Vert \sum_{\ell = 1}^{\infty} u_{k_{i}}(\sigma|_{\ell}) e_{\ell} \Big\Vert_{\mathcal{M}_{\T(\sigma)}} \\
 & \leq \sup_{1 \leq i \leq m} \Vert u_{k_{i}} \Vert_{\T} \leq \sup_{1 \leq i \leq m} \Vert x_{k_{i}} \Vert_{\T} = 1.
\end{align*}

Now, consider $ \sigma \in \Delta \setminus \Gamma $. Note that $ w $ is supported by $ \theta $ and that $ \sigma \notin \Gamma = \overline{\Gamma} = [\theta \cup \{ \emptyset \}] $. Let $ \eta $ be the longest initial segment of $ \sigma $ belonging to $ \theta \cup \{ \emptyset \} $. Then $ \eta $ is an initial segment of some $ \sigma' \in \Gamma $. Due to Fact~\ref{factMTcontin}, we obtain
\begin{align*}
\Big\Vert \sum_{\ell = 1}^{\infty} & w(\sigma|_{\ell}) e_{\ell} \Big\Vert_{\mathcal{M}_{\T(\sigma)}} = \Big\Vert \sum_{\ell = 1}^{|\eta|} w(\sigma|_{\ell}) e_{\ell} \Big\Vert_{\mathcal{M}_{\T(\sigma)}} \\
 & = \Big\Vert \sum_{\ell = 1}^{|\eta|} w(\sigma'|_{\ell}) e_{\ell} \Big\Vert_{\mathcal{M}_{\T(\sigma')}} \leq \Big\Vert \sum_{\ell = 1}^{\infty} w(\sigma'|_{\ell}) e_{\ell} \Big\Vert_{\mathcal{M}_{\T(\sigma')}} \leq 1.
\end{align*}
This completes the verification of $ \Vert w \Vert_{\T} \leq 1 $.
\end{proof}

\begin{proof}[Proof of Claim~\ref{claiml1subseq2}, case (II)]
Let $ \Gamma^{(\ell)} $ denote the smallest set in $ \Sigma_{\ell} $ containing $ \Gamma $, that is $ \Gamma^{(\ell)} = \{ \sigma \in \Delta : \sigma|_{\ell} \in \theta \} $. We choose a subsequence $ y^{*}_{j} = x^{*}_{k_{j}} $ in the way that
$$ \big| \m_{k_{j+1}}(\Gamma^{(q_{k_{j}})}) - \m(\Gamma^{(q_{k_{j}})}) \big| \leq \frac{1}{8}, \quad j = 1, 2, \dots. $$
We have
$$ \m_{k_{j+1}}(\Gamma^{(q_{k_{j}})}) \geq \m(\Gamma^{(q_{k_{j}})}) - \frac{1}{8} \geq \m(\Gamma) - \frac{1}{8} \geq \frac{7}{8} - \frac{1}{8} = \frac{3}{4}. $$
Let us define $ w_{k_{1}} = v_{k_{1}} $ and
$$ w_{k_{j+1}} = v_{k_{j+1}} - \sum_{\eta \in 2^{p_{k_{j+1}}}, \Delta_{\eta} \cap \Gamma^{(q_{k_{j}})} = \emptyset} v_{k_{j+1}, \eta}, \quad j = 1, 2, \dots. $$
Then, using (II), we obtain $ x^{*}_{k_{1}}(w_{k_{1}}) = x^{*}_{k_{1}}(v_{k_{1}}) \geq \frac{1}{2} \geq \frac{1}{4} $ and
\begin{align*}
x^{*}_{k_{j+1}}(w_{k_{j+1}}) & = x^{*}_{k_{j+1}}(v_{k_{j+1}}) - \sum_{\eta \in 2^{p_{k_{j+1}}}, \Delta_{\eta} \cap \Gamma^{(q_{k_{j}})} = \emptyset} x^{*}_{k_{j+1}}(v_{k_{j+1}, \eta}) \\
 & \geq \frac{1}{2} - \sum_{\eta \in 2^{p_{k_{j+1}}}, \Delta_{\eta} \cap \Gamma^{(q_{k_{j}})} = \emptyset} x^{*}_{k_{j+1}}(x_{k_{j+1}, \eta}) \\
 & = \frac{1}{2} - \sum_{\eta \in 2^{p_{k_{j+1}}}, \Delta_{\eta} \cap \Gamma^{(q_{k_{j}})} = \emptyset} \m_{k_{j+1}}(\Delta_{\eta}) \\
 & = \frac{1}{2} - \m_{k_{j+1}}(\Delta \setminus \Gamma^{(q_{k_{j}})}) \geq \frac{1}{2} - \frac{1}{4} = \frac{1}{4}.
\end{align*}
(We have $ x^{*}_{k_{j+1}}(v_{k_{j+1}, \eta}) \leq x^{*}_{k_{j+1}}(x_{k_{j+1}, \eta}) $ because $ 1 - x^{*}_{k_{j+1}}(u_{k_{j+1}, \eta}) = x^{*}_{k_{j+1}}(x_{k_{j+1}} - u_{k_{j+1}, \eta}) \leq \Vert x_{k_{j+1}} - u_{k_{j+1}, \eta} \Vert_{\T} \leq \Vert x_{k_{j+1}} \Vert_{\T} = 1 $).

We claim that every infinite branch intersects the support of at most one $ w_{k_{j}} $. Let us make two observations first.

(a) If $ \sigma \in \Gamma^{(q_{k_{i}})} $, then the branch $ \{ \sigma|_{1}, \sigma|_{2}, \dots \} $ does not intersect the support of $ w_{k_{j}} $ for $ j \leq i $.

Indeed, the initial segments $ \sigma|_{1}, \sigma|_{2}, \dots, \sigma|_{q_{k_{i}}} $ belong to $ \theta $. In particular, the initial segments $ \sigma|_{p_{k_{j}}}, \sigma|_{p_{k_{j}}+1}, \dots, \sigma|_{q_{k_{j}}} $ belong to $ \theta $. The support of $ w_{k_{j}} $ is disjoint from $ \theta $, and so $ \{ \sigma|_{1}, \sigma|_{2}, \dots \} $ does not intersect the support of $ w_{k_{j}} $.

(b) If $ \sigma \notin \Gamma^{(q_{k_{i}})} $, then the branch $ \{ \sigma|_{1}, \sigma|_{2}, \dots \} $ does not intersect the support of $ w_{k_{j+1}} $ for $ j \geq i $.

Indeed, we have
$$ \Delta_{\sigma|_{q_{k_{i}}}} \cap \Gamma^{(q_{k_{i}})} = \emptyset $$
and, in particular,
$$ \Delta_{\sigma|_{p_{k_{j+1}}}} \cap \Gamma^{(q_{k_{j}})} = \emptyset. $$
The sequence $ \eta = \sigma|_{p_{k_{j+1}}} $ appears in the sum in the definition of $ w_{k_{j+1}} $, and so $ w_{k_{j+1}}(\sigma|_{\ell}) = 0 $ for every $ \ell \geq p_{k_{j+1}} $.

Now, we obtain from (a) and (b) that
\begin{itemize}
\item if $ \sigma \in \Delta \setminus \Gamma^{(q_{k_{1}})} $, then the branch $ \{ \sigma|_{1}, \sigma|_{2}, \dots \} $ does not intersect the support of $ w_{k_{j}} $ for $ j \neq 1 $,
\item if $ \sigma \in \Gamma^{(q_{k_{i}})} \setminus \Gamma^{(q_{k_{i+1}})} $ for some $ i $, then the branch $ \{ \sigma|_{1}, \sigma|_{2}, \dots \} $ does not intersect the support of $ w_{k_{j}} $ for $ j \neq i+1 $,
\item if $ \sigma \in \bigcap_{i=1}^{\infty} \Gamma^{(q_{k_{i}})} $, then the branch $ \{ \sigma|_{1}, \sigma|_{2}, \dots \} $ does not intersect the support of $ w_{k_{j}} $ for every $ j $.
\end{itemize}

So, we have shown that every infinite branch intersects the support of at most one $ w_{k_{j}} $. Now, given $ m \in \mathbb{N} $, let us define
$$ w = \sum_{i=1}^{m} w_{k_{i}}. $$
For every $ \sigma \in 2^{\mathbb{N}} $, there is $ j \leq m $ such that $ w(\sigma|_{\ell}) = w_{k_{j}}(\sigma|_{\ell}) $ for each $ \ell \in \mathbb{N} $ (we can choose any $ j \leq m $ if $ w(\sigma|_{\ell}) = 0 $ for each $ \ell $), and so
$$ \Big\Vert \sum_{\ell = 1}^{\infty} w(\sigma|_{\ell}) e_{\ell} \Big\Vert_{\mathcal{M}_{\T(\sigma)}} = \Big\Vert \sum_{\ell = 1}^{\infty} w_{k_{j}} (\sigma|_{\ell}) e_{\ell} \Big\Vert_{\mathcal{M}_{\T(\sigma)}} \leq \Vert w_{k_{j}} \Vert_{\T} \leq \Vert x_{k_{j}} \Vert_{\T} = 1. $$
It follows that $ \Vert w \Vert_{\T} \leq 1 $. At the same time, for $ j \leq m $, we have
$$ y^{*}_{j}(w) = x^{*}_{k_{j}}(w) = \sum_{i=1}^{m} x^{*}_{k_{j}}(w_{k_{i}}) = x^{*}_{k_{j}}(w_{k_{j}}) \geq \frac{1}{4}, $$
and thus $ w $ works.
\end{proof}

\section{A Question} \label{sec:question}

The aim of this short final section is a discussion on the complexity of the isomorphism class of $ c_{0} $ (see Question~\ref{questgodef}) and the formulation of a related problem concerning equivalent norms on $ c_{0} $. First, let us mention a remarkable conjecture from \cite{gokala}.

\begin{conjecture}[Godefroy, Kalton, Lancien]
If $ X $ is a Banach space with summable Szlenk index whose dual $ X^{*} $ is isomorphic to $ \ell_{1} $, then $ X $ is isomorphic to $ c_{0} $.
\end{conjecture}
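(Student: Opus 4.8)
The plan is to exhibit a shrinking finite-dimensional decomposition $(F_n)$ of $X$ which is equivalent to the natural decomposition of $c_0$ into its one-dimensional coordinate spaces, so that $X \cong (\bigoplus_n F_n)_{c_0}$, and then to use the hypothesis $X^{*} \cong \ell_1$ to force the blocks $F_n$ to be uniformly isomorphic to $\ell_\infty^{\dim F_n}$; since a $c_0$-sum of uniformly $\ell_\infty$-blocks is nothing but $c_0$ after reindexing the coordinates, this yields $X \cong c_0$. The two hypotheses thus play genuinely complementary roles. The condition $X^{*} \cong \ell_1$ is what rigidifies the space: on its own it only makes $X$ a separable $\ell_1$-predual, a class containing exotic Bourgain--Delbaen spaces that are far from $c_0$, so it cannot be used in isolation. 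The summable Szlenk index is the ingredient responsible for the upper-$c_0$ geometry, and the real work is to turn it into a \emph{global} estimate rather than a purely asymptotic one.

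The heart of the argument is this conversion. First I would invoke the Godefroy--Kalton--Lancien renorming theory to replace the norm of $X$ by an equivalent one whose modulus of asymptotic uniform smoothness decays summably; this analytic reformulation is the face of the summable Szlenk index best suited to blocking arguments. Then, by a standard gliding-hump and gap construction along a shrinking basis or FDD, I would produce a skipped-block decomposition $(F_n)$ for which every normalized skipped-block sequence $(y_n)$ satisfies $\bigl\Vert \sum_n y_n \bigr\Vert \le (1 + \delta_n)\sup_n \Vert y_n \Vert$ with $\sum_n \delta_n < \infty$. It is precisely here that summability, and not merely the finiteness $\mathrm{Sz}(X) = \omega$, is indispensable: because $\sum_n \delta_n$ converges, the telescoping product $\prod_n (1 + \delta_n)$ is finite, and the individual asymptotic estimates multiply into a single uniform constant $C$ giving the global upper-$c_0$ bound $\Vert \sum_n z_n \Vert \le C \sup_n \Vert z_n \Vert$ valid simultaneously for all block sequences. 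Spaces of Szlenk index $\omega$ with non-summable derivations are exactly those for which this product diverges, which is why the conjecture singles out summability.

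With the global upper-$c_0$ estimate in hand, $X$ is isomorphic to $(\bigoplus_n F_n)_{c_0}$, so $X^{*} \cong (\bigoplus_n F_n^{*})_{\ell_1}$, and the hypothesis $X^{*} \cong \ell_1$ becomes the statement that this $\ell_1$-sum of the finite-dimensional spaces $F_n^{*}$ is isomorphic to $\ell_1$. An $\ell_1$-sum of finite-dimensional spaces is isomorphic to $\ell_1$ essentially only when its summands are uniformly isomorphic to the spaces $\ell_1^{\dim F_n^{*}}$; for instance $(\bigoplus_n \ell_2^{n})_{\ell_1}$ contains an isomorphic copy of $\ell_2$ and hence is not isomorphic to $\ell_1$. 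Extracting from $X^{*} \cong \ell_1$ that the $F_n^{*}$ are uniformly $\ell_1$-blocks, hence that the $F_n$ are uniformly $\ell_\infty$-blocks, completes the reduction to $X \cong c_0$. I expect the main obstacle to lie between the second and third steps: the skipped-block decomposition furnished by the Szlenk analysis must be built so that its dual FDD actually realizes the given isomorphism $X^{*} \cong \ell_1$, and the Bourgain--Delbaen examples show that asymptotic $c_0$-behaviour and an $\ell_1$-dual, taken separately, are both consistent with $X \not\cong c_0$; the proof must therefore exploit the interaction between the weak$^{*}$ Szlenk derivations of $B_{X^{*}}$ and the $\ell_1$-structure of $X^{*}$, rather than either hypothesis alone. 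A secondary, purely bookkeeping difficulty is that the renorming step perturbs the norm, so the isomorphism constants must be tracked carefully to ensure the final constant remains finite.
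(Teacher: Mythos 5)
You should first note a point of framing: the statement you set out to prove is not a theorem of this paper at all. It is recorded in Section~\ref{sec:question} as an open conjecture of Godefroy, Kalton and Lancien \cite{gokala}, whose validity would (as the paper observes) answer Question~\ref{questgodef}; the paper offers no proof, so your text can only be judged as a research program. Judged that way, its pivotal step is not merely unproved but false as stated. You derive, from the summable Szlenk index alone --- via an AUS renorming, a gliding hump, and the telescoping product $\prod_{n}(1+\delta_{n})$ --- a blocking $(F_{n})$ satisfying a \emph{global} upper-$c_{0}$ estimate, and you conclude $X \cong (\bigoplus_{n} F_{n})_{c_{0}}$ \emph{before} the hypothesis $X^{*} \cong \ell_{1}$ ever enters. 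The dual $\Tss$ of Tsirelson's space refutes that implication: it is reflexive (as recalled in the paper's introduction), hence contains no copy of $c_{0}$ and is isomorphic to no $c_{0}$-sum of finite-dimensional spaces, and yet it is well known that $\Tss$ has summable Szlenk index --- this example is precisely why the conjecture needs the hypothesis $X^{*} \cong \ell_{1}$ on top of summability. The paper's own machinery makes the obstruction visible: the upper estimate of Fact~\ref{factsupineq} holds only for $\mathcal{M}$-admissible families, and the dichotomy between Lemma~\ref{lemmaargdel} (all sets in $\mathcal{M}$ finite: boundedly complete basis, no embedding into $c_{0}$) and Lemma~\ref{lemmaembc0} (an infinite set in $\mathcal{M}$: a genuine $c_{0}$-sum) shows that passing from admissibility-constrained upper estimates to unconstrained ones is exactly the leap you take for free. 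Asymptotic estimates limit how many blocks may be combined relative to where the first one starts; they do not attach a once-and-for-all error $\delta_{n}$ to the $n$-th position of a fixed blocking, so there is nothing to telescope. Any correct proof must make the $\ell_{1}$-structure of $X^{*}$ interact with the Szlenk derivations inside the blocking argument --- which is the open content of the conjecture, correctly identified but not bridged by your sketch.

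Two secondary issues. First, your closing step is both dubious and unnecessary. The claim that $(\bigoplus_{n} F_{n}^{*})_{\ell_{1}} \cong \ell_{1}$ forces the $F_{n}^{*}$ to be uniformly isomorphic to $\ell_{1}^{\dim F_{n}^{*}}$ amounts to the hard (essentially open) problem of whether uniformly complemented finite-dimensional subspaces of $\ell_{1}$ are uniformly isomorphic to $\ell_{1}^{n}$; and your supporting example is wrong --- $(\bigoplus_{n} \ell_{2}^{n})_{\ell_{1}}$ contains no copy of $\ell_{2}$, since $\ell_{1}$-sums of finite-dimensional spaces have the Schur property (cf.\ Remark~\ref{remTs}(ii)); that it is not isomorphic to $\ell_{1}$ follows instead from the fact that the relative projection constants of $\ell_{2}^{n}$ in $L_{1}$ tend to infinity. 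Moreover the step is superfluous: every finite-dimensional space $(1+\varepsilon)$-embeds into some $\ell_{\infty}^{N}$, so $X \cong (\bigoplus_{n} F_{n})_{c_{0}}$ would already embed into $c_{0}$, while $X^{*} \cong \ell_{1}$ makes $X$ an $\mathcal{L}_{\infty}$-space, and the Johnson--Zippin theorem \cite{johzip} quoted in Section~\ref{sec:question} then gives $X \cong c_{0}$ outright. Second, the paper's perspective tells against your bookkeeping strategy: it expects (end of Section~\ref{sec:question}) that the ordinal indices $\pi_{X}^{\alpha}$ do not stabilize uniformly even on the class of $c_{0}$-sums of finite-dimensional spaces, and its Theorem~\ref{thmc0subsp} shows the class of subspaces of $c_{0}$ is complete analytic --- both signs that no argument running on summability of constants alone, without the dual hypothesis intervening structurally, can succeed.
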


The validity of this conjecture would imply that every Banach space uniformly isomorphic to $ c_{0} $ was actually isomorphic to $ c_{0} $. As noted by G.~Godefroy \cite{godefroyprobl}, confirming the conjecture would give also the positive answer to Question~\ref{questgodef}. We are going to provide a variant of this approach.

We start with an investigation of subspaces of $ c_{0} $ based on a result of N.~J.~Kalton \cite{kalton}. Let us recall that we denote
$$ \mathrm{ca}(x_{n}) = \inf_{m \in \mathbb{N}} \mathrm{diam} \{ x_{n} : n \geq m \} $$
for a bounded sequence $ x_{1}, x_{2}, \dots $ in a Banach space $ X $.

\begin{theorem}
For a separable Banach space $ X $, the following assertions are equivalent:
\begin{itemize}
\item[{\rm (i)}] $ X $ can be embedded isomorphically into $ c_{0} $.
\item[{\rm (ii)}] There exists a bounded function $ \mu : B_{X^{*}} \to \mathbb{R} $ such that
$$ \liminf_{n \to \infty} \mu(x^{*} + x^{*}_{n}) \geq \mu(x^{*}) + \liminf_{n \to \infty} \Vert x^{*}_{n} \Vert $$
whenever $ x^{*} \in B_{X^{*}} $, $ x^{*} + x^{*}_{n} \in B_{X^{*}} $ and $ x^{*}_{1}, x^{*}_{2}, \dots $ is $ w^{*} $-null.
\item[{\rm (iii)}] There exists a bounded function $ \pi : B_{X^{*}} \to \mathbb{R} $ such that
$$ \pi(x^{*}) \geq \mathrm{ca}(x^{*}_{n}) + \liminf_{n \to \infty} \pi(x^{*} + x^{*}_{n}) $$
whenever $ x^{*} \in B_{X^{*}} $, $ x^{*} + x^{*}_{n} \in B_{X^{*}} $ and $ x^{*}_{1}, x^{*}_{2}, \dots $ is $ w^{*} $-null.
\item[{\rm (iv)}] We have $ \pi_{X}^{\alpha} < \infty $ on $ B_{X^{*}} $ for every $ \alpha < \omega_{1} $ where $ \pi_{X}^{0}(x^{*}) = 0 $,
\begin{align*}
\pi_{X}^{\alpha + 1}(x^{*}) = \sup \big\{ \mathrm{ca}(x^{*}_{n}) & + \liminf_{n \to \infty} \pi_{X}^{\alpha}(x^{*} + x^{*}_{n}) : \\
 & x^{*} + x^{*}_{n} \in B_{X^{*}} \textrm{ and } x^{*}_{n} \xrightarrow{w^{*}} 0 \big\}
\end{align*}
and
$$ \pi_{X}^{\beta}(x^{*}) = \lim_{\alpha \nearrow \beta} \pi_{X}^{\alpha}(x^{*}) \quad \textrm{if $ \beta $ is limit.} $$
\end{itemize}
\end{theorem}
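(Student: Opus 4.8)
The plan is to prove the four assertions equivalent by establishing the cycle (i)$\Rightarrow$(iii)$\Rightarrow$(ii)$\Rightarrow$(i) together with the two remaining links (iii)$\Rightarrow$(iv) and (iv)$\Rightarrow$(iii). The functional-analytic heart of the statement is the equivalence of the embeddability (i) with the existence of the ``gauge'' (ii); I would treat this via Kalton's characterization of subspaces of $c_{0}$ \cite{kalton}, which supplies the nontrivial implication (ii)$\Rightarrow$(i). The role of (iii) is to replace $\liminf_{n}\Vert x^{*}_{n}\Vert$ by the oscillation $\mathrm{ca}(x^{*}_{n})$, and the role of (iv) is to identify the smallest admissible $\pi$ as the value of a transfinite derivation, so that the existence statement (iii) becomes a statement about that derivation terminating at a countable ordinal.

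For (i)$\Rightarrow$(iii), first I would exhibit an explicit witness for $c_{0}$ itself. On $B_{X^{*}}=B_{\ell_{1}}$ the function $\rho(y^{*})=2(1-\Vert y^{*}\Vert_{1})$ works: a $w^{*}$-null (hence coordinatewise null) bounded sequence in $\ell_{1}$ is asymptotically disjointly supported, so $\limsup_{n}\Vert x^{*}+x^{*}_{n}\Vert_{1}=\Vert x^{*}\Vert_{1}+\limsup_{n}\Vert x^{*}_{n}\Vert_{1}$ and $\mathrm{ca}(x^{*}_{n})\leq 2\limsup_{n}\Vert x^{*}_{n}\Vert_{1}$, whence the inequality in (iii) follows by a direct computation. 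For a general $X\hookrightarrow c_{0}$ I would transport this witness through the quotient map $T^{*}\colon\ell_{1}\to X^{*}$ adjoint to an embedding $T\colon X\to c_{0}$, setting $\pi(x^{*})=\sup\{\rho(y^{*}):T^{*}y^{*}=x^{*},\ \Vert y^{*}\Vert_{1}\leq C\}$ for a constant $C$ depending only on the embedding. The one point requiring care is that a $w^{*}$-null sequence in $X^{*}$ can be lifted to a bounded $w^{*}$-null sequence in $\ell_{1}$; this is routine but is the only nonformal step in this implication.

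The implication (iii)$\Rightarrow$(ii) is elementary. Setting $\mu=\sup\pi-\pi$, the inequality in (ii) is equivalent to $\pi(x^{*})\geq\limsup_{n}\pi(x^{*}+x^{*}_{n})+\liminf_{n}\Vert x^{*}_{n}\Vert$, which I would obtain by passing to a subsequence realizing the $\limsup$ and applying (iii) to it, using the lemma that $\mathrm{ca}(x^{*}_{n})\geq\liminf_{n}\Vert x^{*}_{n}\Vert$ for every $w^{*}$-null sequence (a consequence of the $w^{*}$-lower semicontinuity of the norm). The implication (iii)$\Rightarrow$(iv) is a clean transfinite induction: normalizing $\pi\geq 0$ (permissible, since an additive constant cancels in the inequality of (iii)), one shows $\pi_{X}^{\alpha}\leq\pi$ for all $\alpha$, the successor step being immediate from the defining supremum and (iii), and the limit step being trivial. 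In particular every $\pi_{X}^{\alpha}$ is finite, which is (iv).

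The remaining and genuinely delicate implication is (iv)$\Rightarrow$(iii). The derivation $\pi\mapsto D\pi$ sending $\pi$ to $x^{*}\mapsto\sup\{\mathrm{ca}(x^{*}_{n})+\liminf_{n}\pi(x^{*}+x^{*}_{n})\}$ is monotone, and $\pi_{X}^{\alpha}$ is non-decreasing in $\alpha$ (take $x^{*}_{n}\equiv 0$). A bounded $\pi$ satisfying (iii) is exactly a fixed point $D\pi\leq\pi$, so it suffices to produce such a fixed point. For each fixed $x^{*}$ the map $\alpha\mapsto\pi_{X}^{\alpha}(x^{*})$ is non-decreasing and, by (iv), real-valued on $[0,\omega_{1})$; since $\mathbb{R}$ admits no strictly increasing $\omega_{1}$-sequence, it stabilizes at some countable ordinal. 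The obstacle is that the stabilization ordinal might a priori depend on $x^{*}$, and that the pointwise supremum $\sup_{\alpha}\pi_{X}^{\alpha}$ need be neither bounded nor a fixed point (the exchange $\liminf_{n}\sup_{\alpha}\geq\sup_{\alpha}\liminf_{n}$ goes the wrong way). I would resolve this by a boundedness argument of descriptive set-theoretic type: $B_{X^{*}}$ is $w^{*}$-compact metrizable, the graph of the derivation is Borel, and the associated index is an ordinal rank on a coanalytic set, so finiteness at every countable stage forces a uniform countable stabilization ordinal $\alpha_{0}$ together with a uniform bound, in the spirit of the boundedness phenomenon exploited around \cite[(37.11)]{kechris}. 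Then $\pi:=\pi_{X}^{\alpha_{0}}$ satisfies $\pi=D\pi$ identically and is bounded, giving (iii). Establishing this \emph{uniform} termination, rather than the mere pointwise finiteness granted by (iv), is where I expect the main work to lie.
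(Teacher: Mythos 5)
Your overall architecture largely coincides with the paper's: the hard implication (ii)$\Rightarrow$(i) is delegated to Kalton's theorem exactly as in the paper, your (iii)$\Rightarrow$(iv) (normalize $\pi\geq 0$ and show $\pi_{X}^{\alpha}\leq\pi$ by transfinite induction) is word for word the paper's argument, and your (iii)$\Rightarrow$(ii) via $\mu=\sup\pi-\pi$ is the paper's $\mu=-\pi$ up to an additive constant. The one genuinely different route is (i)$\Rightarrow$(iii): you build an explicit witness $\rho(y^{*})=2(1-\Vert y^{*}\Vert_{1})$ on $B_{\ell_{1}}$ and transport it through $T^{*}$, whereas the paper goes (i)$\Rightarrow$(ii) by quoting Kalton--Werner (for a subspace of $c_{0}$ the function $\mu(x^{*})=\Vert x^{*}\Vert$ satisfies (ii)) and then sets $\pi=-2\mu$. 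Your route is workable but you should note two points you gloss over: the isomorphic (not isometric) embedding degrades the constants, which must be repaired by renorming $X$ by $|x|=\Vert Tx\Vert$ and then rescaling $\pi$ (this is harmless, since if $\pi\geq c\,\mathrm{ca}(x^{*}_{n})+\liminf_{n}\pi(x^{*}+x^{*}_{n})$ then $\pi/c$ satisfies (iii)); and the $w^{*}$-null lifting you flag as ``routine'' only works along subsequences (lift boundedly, extract a $w^{*}$-cluster point $y^{*}\in\ker T^{*}$, subtract it), so you must check that the subsequence passage is compatible with bounding $\mathrm{ca}$ of the \emph{full} sequence --- here the saving grace is that $\liminf_{n}\pi(x^{*}+x^{*}_{n})$ is attained along subsequences where the perturbations are large, but this needs to be said.

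The substantive misjudgment is in (iv)$\Rightarrow$(iii), where you declare the main work to lie and propose a descriptive set-theoretic boundedness argument to extract a \emph{uniform} stabilization ordinal. No uniformity is needed, and the paper's one-line solution $\pi(x^{*})=\lim_{\alpha\nearrow\omega_{1}}\pi_{X}^{\alpha}(x^{*})$ is correct as stated: the obstruction you raise (the exchange of $\liminf_{n}$ with $\sup_{\alpha}$) disappears because $\omega_{1}$ has uncountable cofinality. Given any admissible sequence $(x^{*}_{n})$, each of the countably many points $x^{*},x^{*}+x^{*}_{1},x^{*}+x^{*}_{2},\dots$ stabilizes at some countable ordinal (a nondecreasing $\omega_{1}$-sequence of reals that is finite at every stage, including limit stages, must be eventually constant), so all of them stabilize below a common countable $\alpha_{0}$, and then
$$ \pi(x^{*})=\pi_{X}^{\alpha_{0}+1}(x^{*})\geq \mathrm{ca}(x^{*}_{n})+\liminf_{n}\pi_{X}^{\alpha_{0}}(x^{*}+x^{*}_{n})=\mathrm{ca}(x^{*}_{n})+\liminf_{n}\pi(x^{*}+x^{*}_{n}). $$
Boundedness of $\pi$ is then also automatic, with no rank analysis: if $\pi(x^{*}_{k})\to\infty$, use $w^{*}$-sequential compactness of $B_{X^{*}}$ ($X$ separable) to extract $x^{*}_{k}\xrightarrow{w^{*}}x^{*}$, and apply the inequality just proved to the $w^{*}$-null sequence $x^{*}_{k}-x^{*}$ to get $\pi(x^{*})=\infty$, contradicting pointwise finiteness. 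By contrast, your proposed route is both heavier and unsubstantiated as written: you would have to actually verify that the stabilization index is a coanalytic rank (the derivation quantifies over all $w^{*}$-null sequences in $B_{X^{*}}$ and involves $\mathrm{ca}$ and $\liminf$, so this is real work, not a remark), and the citation to \cite[(37.11)]{kechris} points at a $\mathbf{\Pi}^{1}_{2}$-completeness result, not at the boundedness theorem you need. So the gap is not fatal to the theorem --- the simple argument closes it --- but your proposal misidentifies where the difficulty sits and leaves its own key definability claim unproved.
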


Note that it is possible to replace \textquotedblleft liminf\textquotedblright {} with \textquotedblleft limsup\textquotedblright {} in the condition (ii). Of course, there is a version of (iv) based on $ \mu $ instead of $ \pi $, we prefer the current version nevertheless, as the sequence $ \pi_{X}^{\alpha} $ is related to the Szlenk derivatives studied in \cite{bossard}. It can be shown that a separable Banach space $ X $ has summable Szlenk index if and only if $ \pi_{X}^{\omega} < \infty $ on $ B_{X^{*}} $.

\begin{proof}[Proof (sketch).]
(i) $ \Rightarrow $ (ii): It is known that the function $ \mu(x^{*}) = \Vert x^{*} \Vert $ has the desired property if $ X $ is isometric to a subspace of $ c_{0} $ \cite{kalwer}.

(ii) $ \Rightarrow $ (i): This is a consequence of \cite[Theorem~3.3]{kalton}.

(ii) $ \Rightarrow $ (iii): The function $ \pi = -2\mu $ works.

(iii) $ \Rightarrow $ (ii): The function $ \mu = -\pi $ works.

(iii) $ \Rightarrow $ (iv): We can assume that $ \pi \geq 0 $, in which case $ \pi_{X}^{\alpha} \leq \pi $.

(iv) $ \Rightarrow $ (iii): The function $ \pi(x^{*}) = \lim_{\alpha \nearrow \omega_{1}} \pi_{X}^{\alpha}(x^{*}) $ works.
\end{proof}

Now, we proceed to our problem.

\begin{question}
Does there exist some $ \alpha < \omega_{1} $ such that $ \pi_{X}^{\alpha} = \pi_{X}^{\alpha + 1} $ for every Banach space $ X $ isomorphic to $ c_{0} $?
\end{question}

By a classical result of W.~B.~Johnson and M.~Zippin \cite{johzip}, a subspace of $ c_{0} $ is isomorphic to $ c_{0} $ if and only if it is an $ \mathcal{L}_{\infty} $-space. Therefore, if such $ \alpha < \omega_{1} $ as in the question exists, then a separable Banach space $ X $ is isomorphic to $ c_{0} $ if and only if it is an $ \mathcal{L}_{\infty} $-space, $ \pi_{X}^{\alpha} < \infty $ and $ \pi_{X}^{\alpha} = \pi_{X}^{\alpha + 1} $.

Considering results from Section~\ref{sec:tsirelson}, we expect that the answer is negative for the class of all spaces which are isomorphic to the $ c_{0} $-sum of a sequence of finite-dimensional spaces and, in particular, for the class of all spaces which can be embedded isomorphically into $ c_{0} $.


\begin{thebibliography}{99}
\bibitem{argdel} S. A. Argyros and I. Deliyanni, \emph{Examples of asymptotic $ \ell_{1} $ Banach spaces}, Trans. Amer. Math. Soc. {\bf 349}, no. 3 (1997), 973--995.
\bibitem{behrends} E. Behrends, \emph{New proofs of Rosenthal's $ \ell_{1} $-theorem and the Josefson-Nissenzweig theorem}, Bull. Polish Acad. Sci. Math. {\bf 43}, no. 4 (1995), 283--295.
\bibitem{bossard} B. Bossard, \emph{A coding of separable Banach spaces. Analytic and coanalytic families of Banach spaces}, Fund. Math. {\bf 172}, no. 2 (2002), 117--152.
\bibitem{bourgain1} J. Bourgain, \emph{On separable Banach spaces, universal for all separable reflexive spaces}, Proc. Amer. Math. Soc. {\bf 79}, no. 2 (1980), 241--246.
\bibitem{bourgain2} J. Bourgain, \emph{The Szlenk index and operators on $ C(K) $-spaces}, Bull. Soc. Math. Belg., S\'er. B {\bf 31} (1979), 87--117.
\bibitem{braga} B. M. Braga, \emph{On the complexity of some classes of Banach spaces and non-universality}, Czechoslovak Math. J. {\bf 64}, no. 4 (2014), 1123--1147.
\bibitem{diestel} J. Diestel, \emph{A survey of results related to the Dunford-Pettis property}, Contemp. Math. {\bf 2}, Amer. Math. Soc., 1980, 15--60.
\bibitem{fhhmpz} M.~Fabian, P.~Habala, P.~H\'ajek, V.~Montesinos Santaluc\'ia, J.~Pelant and V.~Zizler, \emph{Functional analysis and infinite-dimensional geometry}, CMS Books in~Mathematics {\bf 8}, Springer, 2001.
\bibitem{ghawadrah} G. Ghawadrah, \emph{Non-isomorphic complemented subspaces of the reflexive Orlicz function spaces $ L^{\Phi}[0, 1] $}, Proc. Amer. Math. Soc. {\bf 144}, no. 1 (2016), 285--299.
\bibitem{godefroyprobl} G. Godefroy, \emph{Analytic sets of Banach spaces}, Rev. R. Acad. Cien. Serie A. Mat. {\bf 104}, no. 2 (2010), 365--374.
\bibitem{godefroycompl} G. Godefroy, \emph{The complexity of the isomorphism class of some Banach spaces}, to appear.
\bibitem{gokala} G. Godefroy, N. J. Kalton and G. Lancien, \emph{Szlenk indices and uniform homeomorphisms}, Trans. Amer. Math. Soc. {\bf 353}, no. 10 (2001), 3895--3918.
\bibitem{johzip} W. B. Johnson and M. Zippin, \emph{On subspaces of quotients of $ (\sum G_{n})_{\ell_{p}} $ and $ (\sum G_{n})_{c_{0}} $}, Israel J. Math. {\bf 13}, no. 3 (1972), 311--316.
\bibitem{kackalspur} M. Ka\v{c}ena, O. F. K. Kalenda and J. Spurn\'y, \emph{Quantitative Dunford-Pettis property}, Adv. Math. {\bf 234} (2013), 488--527.
\bibitem{kalspur} O. F. K. Kalenda and J. Spurn\'y, \emph{On a difference between quantitative weak sequential completeness and the quantitative Schur property}, Proc. Amer. Math. Soc. {\bf 140}, no. 10 (2012), 3435--3444.
\bibitem{kalton} N. J. Kalton, \emph{On subspaces of $ c_{0} $ and extension of operators into $ C(K) $-spaces}, Quart. J. Math. {\bf 52}, no. 3 (2001), 313--328.
\bibitem{kalwer} N. J. Kalton and D. Werner, \emph{Property $ (M) $, $ M $-ideals, and almost isometric structure of Banach spaces}, J. Reine Angew. Math. {\bf 461} (1995), 137--178.
\bibitem{kechris} A. S. Kechris, \emph{Classical descriptive set theory}, Graduate Texts in Mathematics {\bf 156}, Springer, 1995.
\bibitem{kubsol} W. Kubi\'s and S. Solecki, \emph{A proof of uniqueness of the Gurari\u{\i} space}, Israel J. Math. {\bf 195}, no. 1 (2013), 449--456.
\bibitem{kurka} O. Kurka, \emph{Amalgamations of classes of Banach spaces with a monotone basis}, Studia Math. {\bf 234}, no. 2 (2016), 121--148.
\bibitem{lusky} W. Lusky, \emph{The Gurarij spaces are unique}, Arch. Math. {\bf 27}, no. 6 (1976), 627--635.
\bibitem{tsirelson} B. S. Tsirelson, \emph{Not every Banach space contains an imbedding of $ \ell_{p} $ or $ c_{0} $}, Funct. Anal. Appl. {\bf 8}, no. 2 (1974), 138--141.
\end{thebibliography}
\end{document}